\DeclareMathAlphabet{\mathcalligra}{T1}{calligra}{m}{n}
\newtheorem{thm}{Theorem}[section]
\newtheorem{cor}[thm]{Corollary}
\newtheorem{lemma}[thm]{Lemma}
\newtheorem{prop}[thm]{Proposition}
\theoremstyle{definition}
\newtheorem{definition}[thm]{Definition}
\newtheorem{remark}[thm]{Remark}
\newtheorem*{defn*}{Definition}
\newtheorem*{rems*}{Remarks}
\newtheorem*{rem*}{Remark}
\newtheorem{corollary}[thm]{Corollary}
\numberwithin{equation}{section}
\begin{document}

\title[The Geometry of the Centre Symmetry Set of a~Planar Curve]{The Geometry of the Centre Symmetry Set of a~Planar Curve}

\author{Dominika Miller$^{1}$, Micha\l{} Zwierzy\'nski$^{2,\pentagram}$}
\email{$^{1}$dominika.monka.stud@pw.edu.pl}
\email[Corresponding author]{$^{2}$michal.zwierzynski@pw.edu.pl}

\address{Warsaw University of Technology\\
Faculty of Mathematics and Information Science\\
ul. Koszykowa 75\\
00-662 Warsaw, Poland}

\thanks{$^{\pentagram}$Corresponding author}

%\thanks{\textbf{Data Availability} Data sharing not applicable to this article as no datasets were generated or analysed during the current study.}

%\thanks{\textbf{Funding} The authors did not receive support from any organization for the submitted work. The authors have no competing interests to declare that are relevant to the content of this article.}

\subjclass[2010]{53A04, 53A15, 58K05}

\keywords{Centre Symmetry Set, Wigner caustic, singularities, generic}

%%%%%%%%%%%%%%%%%%%%%%%%%%%%%%%%%%%%
%%%%%%%%% COMANDS %%%%%%%%%%%%%%%%%%
%%%%%%%%%%%%%%%%%%%%%%%%%%%%%%%%%%%%

\newcommand{\Eq}{{\text{E}}}
\newcommand{\CSS}{\mathrm{CSS}}
\newcommand{\minm}{\mathbbm{m}}
\newcommand{\maxm}{\mathbb{M}}
\newcommand{\overarc}[2]{\underline{#1\frown #2}}
\newcommand{\p}{p}
\newcommand{\SC}{\mathrm{SC}}

%%%%%%%%%%%%%%%%%%%%%%%%%%%%%%%%%%%%%%%%%%%%%%
%%%%%%%%%%%%%%%%%%%%%%%%%%%%%%%%%%%%%%%%%%%%%%
%%%%%%%%% BEGIN %%%%%%%%%%%%%%%%%%%%%%%%%%%%%%
%%%%%%%%%%%%%%%%%%%%%%%%%%%%%%%%%%%%%%%%%%%%%%

\begin{abstract}
The Centre Symmetry Set of a planar curve $M$ is the envelope of affine chords of $M$, i.e. the lines joining points on $M$ with parallel tangent lines. In this paper we study global geometrical properties of this set including the number of singularities and the number of asymptotes. 
\end{abstract}

\maketitle

%%%%%%%%%%%%%%%%%%%%%%%%
%%%%% Introduction %%%%%%%%%%%%%
%%%%%%%%%%%%%%%%%%%%%%%%

\section{Introduction}

Let $M$ be a smooth curve in an affine space $\mathbb{R}^2$. Consider an affine chord of $M$, i.e. an infinite line joining pairs of points of $M$ at which the tangent lines to $M$ are parallel. Then, the Centre Symmetry Set of $M$ ($\CSS(M)$) is the envelope of all affine chords of $M$. Note that for a convex $M$ centrally symmetric about a point $p$ the Centre Symmetry Set degenerates to $p$. Therefore, the $\CSS$ measures in some way the central symmetric property of a curve. The origin of this investigation is the paper \cite{Janeczko} by S. Janeczko. He described the Centre Symmetry Set by the bifurcation set of a certain family of ratios. Then, P.J.~Giblin and P.A.~Holtom in \cite{GiblinHoltom} redefined the $\CSS$ in the way stated above. 
The various kinds of symmetries are a vital part of the study of manifolds in Euclidean spaces and in their geometry applications. The medial axis transform is one of the most popular examples of such applications (see \cite{medial2, medial1}).

The Centre Symmetry Set of a curve can be viewed also as the set of all singular points of a family of affine $\lambda$-equidistants of the curve (see for instance \cite{DJRR, DomitrzMR, DomitrzRiosRuas, ZD_Wigner, GiblinZ2, Zwierz1, Zwierz3} and the references given there. See also \cite{CraizerMartini, CraizerPPOS} for the Centre Symmetry Set in a non-smooth, polygonal case). Generically, the Centre Symmetry Set of a planar curve can admit at most cusp singularities. It was studied by many authors, also in higher dimensions, and in Minkowski spaces. For instance, authors in \cite{GiblinZ1, GiblinZ2, GiblinZ3, ReeveZ} analyse the local properties of the $\CSS$ based on the theory of Lagrange and Legendre singularities. In \cite{GiblinReeve, GiblinReeve2} the authors study the Centre Symmetry Set of families of plane curves and of families of surfaces. The $\CSS$ is a part of the Global Centre Symmetry Set studied in \cite{DomitrzRios}. To see art created from the Centre Symmetry Set (and the Wigner caustic) see \cite{Art}. In the articles cited, the authors obtained properties of the $\CSS$ that are inherently local. One of the few global properties is the result in \cite{GiblinHoltom} that the number of cusps of $\CSS$ of a generic oval is odd and not smaller than $3.$ The results in \cite{Craizer0, DomitrzRios} show that the number of singularities of the Centre Symmetry Set is greater than the number of singularities of the Wigner caustic of a planar oval (in a generic situation). Moreover, it is known that the Wigner caustic is contained in a region bounded by the Centre Symmetry Set of an oval (\cite{Craizer0}). In this article we will prove many global geometrical properties of the Centre Symmetry Set of a planar curve, not necessarily an oval. These theorems will concern the number of the so-called smooth branches of the $\CSS$, the number of singular points, and the number of asymptotes of this set. Before we come to these theorems, in the next section we will precisely state the conditions for specific types of singularities and directly define the term of generic curves, used throughout the article (see Theorem \ref{ThmGenericCSS}).

In Section \ref{decomp} we adapt the decomposition of a curve into parallel arcs from \cite{ZD_Wigner}. This decomposition is necessary to prove some of the global properties of the Centre Symmetry Set of a planar curve in the last section.

\section{The Geometry of the Centre Symmetry Set}\label{geomCSS}

Let $C^{\infty}(X,Y)$ denote the set of smooth $(C^\infty)$ maps between smooth manifolds $X$ and $Y$ with the smooth Whitney topology. A set in a topological space is \textit{generic} if it is a countable intersection of open and dense sets. A \textit{Baire space} is a topological space in which any generic set is dense. It is well known that $C^{\infty}(X,Y)$ is a Baire space (e.g. see \cite{GG-Book}).
Throughout the paper we will denote by $M$ the image of a \textit{smooth curve} (i.e. an element in $C^{\infty}(I,\mathbb{R}^2)$, where $I\subset\mathbb{R}$ is an open interval) on the affine plane $\mathbb{R}^2$ or the smooth curve itself if this does not lead to ambiguity. If a smooth map is from $S^1$ to $\mathbb{R}^2,$ we will call its image \textit{closed}. A point of a curve is called \textit{regular} if its velocity vector (the derivative) does not vanish. Otherwise -- it is called \textit{singular}. A curve without a singular point will be called \textit{regular}. Otherwise -- \textit{singular}. A regular curve is \textit{convex} if its curvature does not vanish at any point. The \textit{rotation number} of a regular curve is the rotation number of its velocity vector field. An \textit{$n$-rosette} is a closed convex curve with $n$ as its rotation number. A point $p$ of a smooth regular curve is an \textit{inflexion point} if its curvature changes sign at $p$, and is an \textit{undulation point} if its curvature vanishes at $p$ but does not change sign at $p$. If a regular curve $M$ is parameterized by $f$, then an inflexion point $f(s_0)$ is \textit{non-degenerate} if $\det\left(f'(s_0),f'''(s_0)\right)\neq 0$. A singular point is a \textit{cusp} if its locally diffeomorphic (in the source and in the target) to the map $t\mapsto(t^2,t^3)$ at $t=0$. It is well known (e.g. see Theorem B.9.1 in \cite{UYBook}) that a map $f$ is a cusp at $s_0$ if and only if $f'(s_0)=0$ and $\det\big(f''(s_0),f'''(s_0)\big)\neq 0.$ 
We will denote by $\kappa_f$ or $\kappa_M$ the signed curvature of a curve $M$ parameterized by $f.$

\begin{definition}
A pair of points $a,b \in M$ $(a \neq b)$ is called a \textit{parallel pair} if the lines tangent to $M$ at $a$ and $b$ are parallel.
\end{definition}

\begin{definition}
A \textit{chord} passing through a pair $a,b \in M,$ is the line:
\begin{equation}
    \ell(a,b) = \{ \lambda a + (1-\lambda)b  \ | \ \lambda \in \mathbb{R} \}.
\end{equation}
If $a,b$ is a parallel pair of $M,$ then $\ell(a,b)$ is called an \textit{affine chord}.
\end{definition}

\begin{definition}
The \textit{Centre Symmetry Set} of $M,$ denoted by $\CSS(M),$ is the envelope of all affine chords of $M.$
\end{definition}

\begin{definition}
Let $\lambda\in\mathbb{R}$. An affine $\lambda$ equidistant of $M$ is the following set
$$\Eq_{\lambda}(M):=\left\{\lambda a+(1-\lambda)b\, \big|\, a,b\text{ is a parallel pair of }M\right\}.$$
The set $\Eq_{0.5}(M)$ is also known as the \textit{Wigner caustic} of $M$. 
\end{definition}

The Centre Symmetry Set of $M$ can be also viewed as the set of all singular points in the family of affine $\lambda$-equidistants for $\lambda\in\mathbb{R}$ (\cite{GiblinZ2,GiblinZ3}). There are many articles related with the geometry of singularities of affine equidistants -- see \cite{DJRR, DomitrzRiosRuas, Zwierz1, Zwierz2, Zwierz3} and their references. In particular, the geometry of the Wigner caustic is strictly related with isoperimetric problems (\cite{Zhang1}--\cite{Zwierz3}) and with the construction of improper affine spheres (\cite{CraizerDR1, CraizerDR2, Craizer}).

\begin{definition}
Let $f$ be a regular parameterization of a curve $M.$ We say that $M$ is \textit{parameterized at opposite directions} at points $f(s_1)$ and $f(s_2)$ if $f'(s_1)=\alpha f'(s_2),$ where $\alpha<0$.
\end{definition}

\begin{definition}
Let $M$ be a regular closed curve. 
A pair of points $a, b \in M$ is called \textit{standard} if $a,$ $b$ is a parallel pair, such that $M$ is parameterized at $a$ and $b$ at opposite directions and $\kappa_M(b) \neq 0.$
\end{definition}

\begin{definition}
Let $M$ be a regular closed curve.
A line that is tangent to $M$ at two distinct points is called a \textit{double tangent}.
\end{definition}

In this paper a singular point of the Centre Symmetry Set will be understood as a singular point of its so called \textit{natural parameterization}. This parameterization will be given locally and we will describe it in Subsection \ref{sectionLocParam}.

\subsection{Local Parameterization of $\CSS(M)$}\label{sectionLocParam}

Let $a, b \in M$ be a standard pair of points.
Let $f(s)$ and $g(t)$ be the local arc length parameterizations of $M$ at $a$ and $b,$ respectively.

Since $a$ and $b$ are a parallel pair and $f$ and $g$ are parameterized at opposite directions, we have that $f'(s)=-g'(t),$ where $'$ signifies the derivative with respect to the corresponding parameter $t$ or $s.$ By the implicit function theorem one can show that there exists a smooth function $t(s)$ such that $f'(s)=-g'(t(s))$ and 
\begin{equation}\label{t_prim}
     t'(s) = \dfrac{\kappa_f(s)}{\kappa_g(t(s))}.
\end{equation}
Using this and the fact that the Centre Symmetry Set is an envelope of the following family of lines:
\begin{equation}\label{l}
     \ell_s(u) = f(s) + u(g(t(s)) - f(s)),
\end{equation}
one can get that double tangents are part of the $\CSS$ and the other part can be locally parameterized as follows:
\begin{align}\label{param_l_ostateczna}
     \gamma_{\CSS}(s):=\ell_s\left(\frac{\kappa_g(t(s))}{\kappa_f(s) + \kappa_g(t(s))}\right) 
     = \dfrac{\kappa_f(s) \cdot f(s) + \kappa_g(t(s)) \cdot g(t(s))}{\kappa_f(s) + \kappa_g(t(s))}
\end{align}
whenever $\kappa_f(s)+\kappa_g(t(s))\neq 0.$ 

\begin{remark}\label{rem:asymptote}
In the case when $\kappa_f(s)+\kappa_g(t(s))=0,$ the affine chord through $f(s)$ and $g(t(s))$ is an asymptote of the $\CSS.$
\end{remark}

\begin{remark}
In the neighbourhood of every inflexion point $p_i$ there is a sequence of parallel pairs, in which both points converge to the inflexion point $p_i.$ In other words, for each member $(a_k, b_k)$ of the sequence, $a_k \to p_i$ and $b_k \to p_i.$ This observation leads to the fact that every ordinary inflexion point is the beginning of a component of $\CSS(M)$ (\cite{GiblinHoltom}). This component of $\CSS(M)$ we will call \textit{a $\CSS$ on shell} (similarly to the notion of the Wigner caustic on shell -- see \cite{DomitrzMR, ZD_Wigner}).
\end{remark}

Using the parameterization \eqref{param_l_ostateczna}, by direct calculations one can obtain the following lemma.

\begin{lemma}\label{CuspsofCSS}
Let $a, b \in M$ be a standard pair of points.
The set $\CSS(M)$ has a singular point at $\frac{\kappa_M(a) \cdot a + \kappa_M(b) \cdot b}{\kappa_M(a) + \kappa_M(b)}$ if and only if $\left( \frac{\kappa_M(a)}{\kappa_M(b)} \right)' = 0$ (which is equivalent to $\kappa_M'(a)\kappa_M^2(b)=\kappa_M'(b)\kappa_M^2(a)$), where $'$ signifies the derivative with respect to the arc parameter of the curve. The singular point $p$ of $\CSS(M)$ is a cusp if and only if $\det\left(f(a)-f(b),f'(b)\right)\neq 0$ and  $\left( \frac{\kappa_M(a)}{\kappa_M(b)} \right)'' \neq 0$ (which is equivalent to $\kappa_M''(a)\kappa_M^3(b)\neq\kappa_M''(b)\kappa_M^3(a)$).
\end{lemma}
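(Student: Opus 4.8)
The plan is to start from the explicit natural parameterization $\gamma_{\CSS}(s)$ given in \eqref{param_l_ostateczna} and simply differentiate it, exploiting the relation \eqref{t_prim} between $t'(s)$ and the curvatures. Writing $\kappa_1(s):=\kappa_f(s)$ and $\kappa_2(s):=\kappa_g(t(s))$, and using $f'(s)=-g'(t(s))$, one computes
\[
\gamma_{\CSS}'(s)=\frac{d}{ds}\left(\frac{\kappa_1 f+\kappa_2 g}{\kappa_1+\kappa_2}\right).
\]
After applying the quotient rule and collecting terms, the contributions from $f'$ and $g'$ (via $g'(t(s))t'(s)=-\kappa_1/\kappa_2\cdot f'(s)\cdot\kappa_2 = $ the appropriate multiple) should combine to a scalar multiple of $f'(s)$, while the remaining terms are a scalar multiple of the chord vector $g(t(s))-f(s)$. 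The key algebraic fact to extract is that the coefficient of the chord vector $(f(a)-f(b))$ is proportional to $\left(\kappa_1/\kappa_2\right)'$ (equivalently to $\kappa_1'\kappa_2^2-\kappa_2'\kappa_1^2$ after clearing denominators, where one must be careful that the primes on $\kappa_2$ mean $\tfrac{d}{ds}\kappa_g(t(s))=\kappa_g'(t(s))t'(s)$, which accounts for the stated form $\kappa_M'(a)\kappa_M^2(b)=\kappa_M'(b)\kappa_M^2(a)$ once \eqref{t_prim} is substituted). Since $f(a)-f(b)$ and $f'(b)=-f'(a)$ are linearly independent exactly when the chord is not tangent to $M$ (i.e. $\det(f(a)-f(b),f'(b))\neq 0$), and since at a standard pair this determinant is generically nonzero, $\gamma_{\CSS}'(s)=0$ forces the chord-vector coefficient to vanish, giving the first equivalence.

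For the cusp criterion I would invoke the characterization quoted in the excerpt: $\gamma_{\CSS}$ has a cusp at $s_0$ iff $\gamma_{\CSS}'(s_0)=0$ and $\det\big(\gamma_{\CSS}''(s_0),\gamma_{\CSS}'''(s_0)\big)\neq 0$. The cleaner route, though, is to use the standard fact that for a curve of the form $\gamma(s)=p(s)+\varphi(s)v(s)$ one can detect a cusp by looking at the first nonvanishing derivative; concretely, at the singular point $\gamma_{\CSS}'(s_0)=0$ we have (from the computation above) $\gamma_{\CSS}'(s)=A(s)f'(s)+B(s)(f(a)-f(b))$ with $B(s_0)=0$ (and $A(s_0)$ generically nonzero — actually one should check $A(s_0)$ as well; in fact at a singular point both $A(s_0)=0$ and $B(s_0)=0$, so one differentiates). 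Differentiating once more, $\gamma_{\CSS}''(s_0)$ picks up $B'(s_0)(f(a)-f(b))$ plus multiples of $f'$, and $\gamma_{\CSS}'''(s_0)$ picks up $B''(s_0)(f(a)-f(b))+(\text{coeff})f'$; then $\det(\gamma_{\CSS}'',\gamma_{\CSS}''')$ reduces, after cancelling the $f'$ vs $(f(a)-f(b))$ determinant, to something proportional to $B'(s_0)\cdot(\text{stuff})$... the honest statement is that the second-order vanishing condition is $\left(\kappa_1/\kappa_2\right)''\neq 0$, so the cleanest presentation is: since $B(s)$ is a nonzero scalar multiple of $\left(\kappa_1/\kappa_2\right)'$, we have $B(s_0)=0$ and $B'(s_0)\neq 0 \iff \left(\kappa_1/\kappa_2\right)''(s_0)\neq 0$, and a direct check shows this is precisely the condition making the first two nonvanishing derivatives of $\gamma_{\CSS}$ at $s_0$ span $\mathbb{R}^2$.

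In more detail for the cusp step: at $s_0$ we substitute and find $\gamma_{\CSS}'(s)= C(s)\left(\kappa_1/\kappa_2\right)'(s)\,(f(a)-f(b)) + (\text{terms in }f')$ where in fact a short computation shows the $f'$-terms also carry a factor that vanishes when $\left(\kappa_1/\kappa_2\right)'=0$ — this is the statement $\gamma_{\CSS}'(s_0)=0$ — so near $s_0$, $\gamma_{\CSS}'(s)$ is, to leading order, $\left(\kappa_1/\kappa_2\right)''(s_0)(s-s_0)$ times a fixed nonzero vector (a combination of $f'(b)$ and $f(a)-f(b)$) plus higher order, whence $\gamma_{\CSS}'(s_0)=0$, $\gamma_{\CSS}''(s_0)\neq 0$ when $\left(\kappa_1/\kappa_2\right)''(s_0)\neq 0$, and one checks $\gamma_{\CSS}''(s_0)$ and $\gamma_{\CSS}'''(s_0)$ are independent using $\det(f(a)-f(b),f'(b))\neq 0$. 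Converting $\left(\kappa_1/\kappa_2\right)''\neq 0$ into $\kappa_M''(a)\kappa_M^3(b)\neq\kappa_M''(b)\kappa_M^3(a)$ is then a routine expansion using \eqref{t_prim} and the already-established relation $\kappa_M'(a)\kappa_M^2(b)=\kappa_M'(b)\kappa_M^2(a)$ at the singular point (the latter kills the cross terms so that the second-derivative identity simplifies to the stated one).

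The main obstacle I anticipate is purely bookkeeping: keeping straight that differentiation with respect to $s$ acts on $\kappa_g(t(s))$ through the chain rule with factor $t'(s)=\kappa_f(s)/\kappa_g(t(s))$, so that every "$\kappa_M'(b)$" in the final statement secretly means $\kappa_g'(t(s))$ and must be multiplied by $t'(s)$ before it matches the $s$-derivative — this is exactly where the extra powers of $\kappa_M$ in "$\kappa_M'(a)\kappa_M^2(b)=\kappa_M'(b)\kappa_M^2(a)$" and "$\kappa_M''(a)\kappa_M^3(b)=\kappa_M''(b)\kappa_M^3(a)$" come from. No conceptual difficulty remains once the derivative $\gamma_{\CSS}'(s)$ is written in the basis $\{f'(s),\,f(a)-f(b)\}$; everything else is the quotient rule and substitution of \eqref{t_prim}.
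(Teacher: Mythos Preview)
Your approach is exactly the paper's: the lemma is stated immediately after \eqref{param_l_ostateczna} with the remark that it follows ``by direct calculations'', so differentiating $\gamma_{\CSS}$ and using \eqref{t_prim} is precisely what is intended.

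There is, however, one simplification you are missing that removes all the hedging in your cusp discussion: the $f'$-terms in $\gamma_{\CSS}'$ cancel \emph{identically}, not only at the singular point. Indeed $\kappa_1 f' + \kappa_2\, g'(t(s))\,t'(s) = \kappa_1 f' + \kappa_2(-f')(\kappa_1/\kappa_2) = 0$, and the quotient rule then gives exactly
\[
\gamma_{\CSS}'(s)=\frac{\kappa_1'\kappa_2-\kappa_1\kappa_2'}{(\kappa_1+\kappa_2)^2}\,\bigl(f(s)-g(t(s))\bigr),
\]
a pure chord vector. Thus $\gamma_{\CSS}'=0$ is equivalent to $(\kappa_1/\kappa_2)'=0$ with no appeal to $\det(f(a)-f(b),f'(b))\neq 0$; this is why that determinant appears only in the cusp clause of the lemma, not in the singular-point clause. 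Your decomposition $\gamma_{\CSS}'=A(s)f'+B(s)(f-g)$ has $A\equiv 0$, so there is no issue of ``both $A(s_0)=0$ and $B(s_0)=0$''. For the cusp, writing $\gamma_{\CSS}'(s)=B(s)(f-g)$ with $B(s_0)=0$ gives $\gamma_{\CSS}''(s_0)=B'(s_0)(f-g)$ and $\gamma_{\CSS}'''(s_0)=B''(s_0)(f-g)+2B'(s_0)\tfrac{\kappa_1+\kappa_2}{\kappa_2}f'$, so $\det\bigl(\gamma_{\CSS}''(s_0),\gamma_{\CSS}'''(s_0)\bigr)$ is a nonzero multiple of $\bigl(B'(s_0)\bigr)^2\det\bigl(f(a)-f(b),f'(a)\bigr)$, yielding both cusp conditions cleanly. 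Your chain-rule bookkeeping for converting $(\kappa_1/\kappa_2)''\neq 0$ into $\kappa_M''(a)\kappa_M^3(b)\neq\kappa_M''(b)\kappa_M^3(a)$ is correct.
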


The above lemma and Corollary \ref{InflPtCssRemark} was also obtained in \cite{GiblinHoltom} using a slightly different approach.

Directly by the local parameterization of the Centre Symmetry Set we obtain the following formula for the curvature function of the $\CSS.$

\begin{prop}\label{PropCurvCss}
Let $a, b \in M$ be a standard pair of points and let \linebreak $q = \frac{\kappa_M(a) \cdot a + \kappa_M(b) \cdot b}{\kappa_M(a) + \kappa_M(b)}$ be a regular point of $\CSS(M).$
Then the curvature of $\CSS(M)$ at $q$ is expressed by the formula
\begin{equation}\label{curvature_CSS}
    \kappa_{\CSS(M)}(q) = \mathrm{sgn}(\kappa_M(b)) \cdot
    \dfrac{\big( \kappa_M(a) + \kappa_M(b)\big)^3}
    {\left| \kappa^2_M(b)\kappa'_M(a) - \kappa^2_M(a)\kappa'_M(b)\right|} \cdot 
    \dfrac{\det(a-b, \mathbbm{t}(a))}{|a-b|^3},
\end{equation}
where $\mathbbm{t}(a)$ is a tangent unit vector field compatible with the orientation of $M$ at $a$ and $'$ signifies the derivative with respect to the arc parameter of a curve.
\end{prop}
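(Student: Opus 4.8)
The plan is to compute the curvature of $\CSS(M)$ directly from the natural parameterization $\gamma_{\CSS}$ given in \eqref{param_l_ostateczna}, using the standard formula $\kappa = \det(\gamma',\gamma'')/|\gamma'|^3$. First I would differentiate $\gamma_{\CSS}(s) = \dfrac{\kappa_f(s) f(s) + \kappa_g(t(s)) g(t(s))}{\kappa_f(s)+\kappa_g(t(s))}$ once with respect to $s$. Writing $K(s) := \kappa_f(s)+\kappa_g(t(s))$ for the denominator, the numerator of $\gamma_{\CSS}'$ simplifies considerably because $f'(s) = -g'(t(s))$ (the opposite-directions condition) and because $g'(t(s))\cdot t'(s) = \frac{\kappa_f(s)}{\kappa_g(t(s))} g'(t(s))$ by \eqref{t_prim}; the terms proportional to $f'$ and $g'$ combine, and I expect $\gamma_{\CSS}'(s)$ to be a scalar multiple of the chord direction $a - b = f(s) - g(t(s))$, say $\gamma_{\CSS}'(s) = \mu(s)\,(f(s)-g(t(s)))$ for an explicit scalar $\mu(s)$ involving $\kappa_f,\kappa_g,\kappa_f',\kappa_g'$ and $K(s)$. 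This is consistent with the envelope property — the tangent line to the $\CSS$ at $\gamma_{\CSS}(s)$ is the affine chord $\ell(a,b)$ — and it is exactly why $\det(f(a)-f(b),f'(b)) = \det(a-b,\mathbbm t(a))$ (up to the sign of $|a-b|$, since $\mathbbm t(a)$ is the unit tangent) shows up in the final formula.

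Next I would differentiate once more to get $\gamma_{\CSS}''(s) = \mu'(s)(f(s)-g(t(s))) + \mu(s)(f'(s) - g'(t(s))t'(s))$. Since the first term is parallel to $\gamma_{\CSS}'$, it drops out of $\det(\gamma_{\CSS}',\gamma_{\CSS}'')$, leaving
\[
\det\big(\gamma_{\CSS}'(s),\gamma_{\CSS}''(s)\big) = \mu(s)^2\,\det\!\big(f(s)-g(t(s)),\, f'(s) - t'(s)g'(t(s))\big).
\]
Using $f'(s) = -g'(t(s))$ and $t'(s) = \kappa_f(s)/\kappa_g(t(s))$, the second slot becomes $-\big(1 + \kappa_f(s)/\kappa_g(t(s))\big) g'(t(s)) = -\frac{K(s)}{\kappa_g(t(s))} g'(t(s))$, and since $g'(t(s)) = -f'(s) = -\mathbbm t(a)$ (as $f$ is arc length), this is $\frac{K(s)}{\kappa_g(t(s))}\mathbbm t(a)$. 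So the determinant reduces to $\mu(s)^2 \cdot \frac{K(s)}{\kappa_g(t(s))}\det(a-b,\mathbbm t(a))$. Then $|\gamma_{\CSS}'(s)|^3 = |\mu(s)|^3 |a-b|^3$, and assembling $\kappa_{\CSS(M)}(q) = \det(\gamma_{\CSS}',\gamma_{\CSS}'')/|\gamma_{\CSS}'|^3$ gives
\[
\kappa_{\CSS(M)}(q) = \frac{\mu(s)^2}{|\mu(s)|^3}\cdot\frac{K(s)}{\kappa_g(t(s))}\cdot\frac{\det(a-b,\mathbbm t(a))}{|a-b|^3} = \frac{1}{|\mu(s)|}\cdot\frac{\mathrm{sgn}(\mu(s))\,K(s)}{\kappa_g(t(s))}\cdot\frac{\det(a-b,\mathbbm t(a))}{|a-b|^3}.
\]
It then remains to insert the explicit value of $\mu(s)$ obtained in the first step: I expect $\mu(s) = \dfrac{\kappa_g^2(t(s))\kappa_f'(s) - \kappa_f^2(s)\kappa_g'(t(s))}{\kappa_g(t(s))\,K(s)^2}$ (up to sign), the numerator being precisely the quantity from Lemma \ref{CuspsofCSS} that governs singular points of the $\CSS$. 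Substituting, the factors of $\kappa_g(t(s))$ and powers of $K(s)$ combine to produce $\dfrac{K(s)^3}{|\kappa_g^2(t(s))\kappa_f'(s)-\kappa_f^2(s)\kappa_g'(t(s))|}$, and tracking the sign shows the overall sign is $\mathrm{sgn}(\kappa_g(t(s))) = \mathrm{sgn}(\kappa_M(b))$ (using $\kappa_M(b)\neq 0$ from the definition of a standard pair), which yields exactly \eqref{curvature_CSS}.

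The main obstacle is purely bookkeeping: carrying the scalar $\mu(s)$ through accurately, keeping track of the several places where \eqref{t_prim} and the relation $f' = -g'\circ t$ are used, and — most delicately — pinning down the sign. The absolute values in \eqref{curvature_CSS} mean I must be careful that the sign contributions from $\mathrm{sgn}(\mu(s))$, from $1/\kappa_g$, and from replacing $\kappa_f'\kappa_g^2 - \kappa_f^2\kappa_g'$ by its absolute value all reconcile to the single prefactor $\mathrm{sgn}(\kappa_M(b))$; a clean way to handle this is to fix the orientation so that $\mathbbm t(a)$ is the unit tangent compatible with the parameterization of $M$ at $a$, note that $g'(t(s)) = -\mathbbm t(a)$ forces the tangent at $b$ to be $-\mathbbm t(a)$, and then verify the sign at one explicit well-understood example (e.g. a point of an ellipse) to confirm no sign has been dropped. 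Apart from the sign, the regularity hypothesis that $q$ is a regular point of $\CSS(M)$ guarantees $\mu(s)\neq 0$ (equivalently $\kappa_g^2\kappa_f' \neq \kappa_f^2\kappa_g'$ by Lemma \ref{CuspsofCSS}), so all divisions above are legitimate.
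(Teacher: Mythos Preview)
Your approach is correct and is exactly what the paper does --- it simply states that the formula follows ``directly by the local parameterization of the Centre Symmetry Set'' without writing out the computation you have sketched. One small slip in your bookkeeping: $\mu(s)^2/|\mu(s)|^3 = 1/|\mu(s)|$ with no extra $\mathrm{sgn}(\mu(s))$ factor (since $\mu^2 = |\mu|^2$ for real $\mu$), so the overall sign arises purely from $|\kappa_g|/\kappa_g = \mathrm{sgn}(\kappa_g)$ when you substitute $|\mu| = |\kappa_f'\kappa_g^2 - \kappa_f^2\kappa_g'|/(|\kappa_g|\,K^2)$, which gives \eqref{curvature_CSS} directly without needing the confirmatory example.
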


\begin{cor}\label{InflPtCssRemark}
Let $a, b \in M$ be a standard pair of points. Any curve contained in $\CSS(M)$ has curvature equal to $0$ at point $p$ if and only if $p$ lies on a double tangent.
\end{cor}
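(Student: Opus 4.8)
The plan is to read off the statement directly from the curvature formula \eqref{curvature_CSS} in Proposition \ref{PropCurvCss}, which expresses $\kappa_{\CSS(M)}(q)$ as a product of three factors. Since we are told $q$ is a \emph{regular} point of $\CSS(M)$ (so the natural parameterization is defined and the denominators $\kappa_M(a)+\kappa_M(b)$ and $\kappa^2_M(b)\kappa'_M(a)-\kappa^2_M(a)\kappa'_M(b)$ are nonzero), the vanishing of $\kappa_{\CSS(M)}(q)$ is controlled entirely by the numerator factors $\big(\kappa_M(a)+\kappa_M(b)\big)^3$ and $\det(a-b,\mathbbm{t}(a))$. The first of these cannot vanish at a regular point; hence $\kappa_{\CSS(M)}(q)=0$ if and only if $\det(a-b,\mathbbm{t}(a))=0$.

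The next step is to interpret the condition $\det(a-b,\mathbbm{t}(a))=0$ geometrically. This determinant vanishes precisely when the vector $a-b$ joining the parallel pair is parallel to the tangent direction $\mathbbm{t}(a)$ at $a$; equivalently, the chord $\ell(a,b)$ coincides with the tangent line to $M$ at $a$. Because $a,b$ is a parallel pair, the tangent lines at $a$ and $b$ have the same direction, so the chord through $a$ and $b$ agreeing with the tangent at $a$ forces it to agree with the tangent at $b$ as well: the line $\ell(a,b)$ is tangent to $M$ at both $a$ and $b$, i.e.\ it is a double tangent in the sense of the definition given above. Conversely, if $\ell(a,b)$ is a double tangent, then $a-b$ lies along the common tangent direction, so $\det(a-b,\mathbbm{t}(a))=0$.

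Finally I would note a small bookkeeping point: the statement speaks of ``any curve contained in $\CSS(M)$,'' and by the discussion after \eqref{l} the $\CSS$ decomposes into the double tangents themselves together with the branches carrying the natural parameterization $\gamma_{\CSS}$. A double-tangent component is a straight line, so its curvature is identically $0$, and every point of it lies (trivially) on a double tangent; this is consistent with the claim. For the remaining, ``natural'' branches the equivalence is exactly the one extracted from \eqref{curvature_CSS} above, so combining the two cases gives the corollary. I do not anticipate a real obstacle here: the only thing to be careful about is making sure the hypothesis ``$q$ is a regular point'' is genuinely what lets us discard the denominators in \eqref{curvature_CSS}, and that $\mathbbm{t}(a)$ being a unit tangent vector means $\det(a-b,\mathbbm{t}(a))=0$ really is equivalent to $a-b$ being tangent at $a$ (rather than, say, to $a=b$, which is excluded since $a,b$ is a parallel pair and in particular $a\neq b$).
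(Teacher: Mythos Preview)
Your proposal is correct and is precisely the argument the paper intends: the corollary is stated immediately after Proposition \ref{PropCurvCss} with no separate proof, so the only thing to do is read off from \eqref{curvature_CSS} that at a regular point of the natural parameterization the sole factor that can vanish is $\det(a-b,\mathbbm{t}(a))$, and then interpret this as the chord $\ell(a,b)$ being a double tangent. Your bookkeeping remark about the straight double-tangent components of $\CSS(M)$ is also appropriate and completes the picture.
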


When two asymptotes of a curve are covering each other, which in fact means that they are the same line, this line is called a \textit{double asymptote}.

Before we describe known local properties of the Centre Symmetry Set, we will define a generic set $\mathcal{G}$ in $C^{\infty}(S^1,\mathbb{R}^2)$.
Whenever in the paper we say that the curve $f$ is generic we mean that $f$ belongs to the set described in the following theorem.

\begin{thm}\label{ThmGenericCSS}
Let $\mathcal{G}$ be the subset of $C^{\infty}(S^1,\mathbb{R}^2)$ such that any $f\in\mathcal{G}$ satisfies the following properties:
\begin{enumerate}[(i)]
    \item The curve $f$ is a regular curve with only transversal self-crossings, with only non-degenerate inflexion points and with no undulation points.
    \item If $f(s_1),f(s_2)$ is a parallel pair, then at least one of $f(s_1)$, $f(s_2)$ is not an inflexion point.
    \item Let $f(s_1),f(s_2)$ be a parallel pair. If $\det\big(f(s_1)-f(s_2),f'(s_1)\big)=0$, then $\kappa_f(s_1)\kappa_f(s_2)\neq 0$.
    \item Let $f(s_1), f(s_2)$ be a parallel pair. If $\kappa_f'(s_1)\kappa_g^2(s_2)=\kappa_f^2(s_1)\kappa'_g(s_2)$, then $\kappa_f''(s_1)\kappa_g^3(s_2)\neq\kappa_f^3(s_1)\kappa''_g(s_2)$.
    \item If $f(s_1),f(s_2)$ is a parallel pair and $\kappa_f(s_1)-\operatorname{sgn}\left(\left<f'(s_1),f'(s_2)\right>\right)\kappa_f(s_2)= 0.$ Then  $\kappa_f'(s_1)\kappa_g^2(s_2)\neq \kappa_f^2(s_1)\kappa'_g(s_2)$.
    \item If $f(s_1),f(s_2)$ is a parallel pair, then the conditions $\kappa_f'(s_1)\kappa_g^2(s_2)=\kappa_f^2(s_1)\kappa'_g(s_2)$ and $\det\big(f(s_1)-f(s_2),f'(s_1)\big)=0$ do not occur simultaneously. 
    \item If $f(s_1),f(s_2)$ and $f(s_3),f(s_4)$ are two distinct parallel pairs which have the same affine chord, then $\kappa_f(s_1)-\operatorname{sgn}\left(\left<f'(s_1),f'(s_2)\right>\right)\kappa_f(s_2)\neq 0$ or  $\kappa_f(s_3)-\operatorname{sgn}\left(\left<f'(s_3),f'(s_4)\right>\right)\kappa_f(s_4)\neq 0$.
    \item If $f(s_1),f(s_2)$ and $f(s_3),f(s_4)$ are two distinct parallel pairs such that $\kappa_f(s_1)-\operatorname{sgn}\left(\left<f'(s_1),f'(s_2)\right>\right)\kappa_f(s_2)\neq 0,$  $\kappa_f(s_3)-\operatorname{sgn}\left(\left<f'(s_3),f'(s_4)\right>\right)\kappa_f(s_4)\neq 0,$ and 
    \scriptsize
    $$\frac{\kappa_f(s_1)f(s_1)-\operatorname{sgn}\left(\left<f'(s_1),f'(s_2)\right>\right)\kappa_f(s_2)f(s_2)}{\kappa_f(s_1)-\operatorname{sgn}\left(\left<f'(s_1),f'(s_2)\right>\right)\kappa_f(s_2)}=
    \frac{\kappa_f(s_3)f(s_3)-\operatorname{sgn}\left(\left<f'(s_3),f'(s_4)\right>\right)\kappa_f(s_4)f(s_4)}{\kappa_f(s_3)-\operatorname{sgn}\left(\left<f'(s_3),f'(s_4)\right>\right)\kappa_f(s_4)},$$
    \normalsize
    then
    $\det\left(f(s_2)-f(s_1),f(s_4)-f(s_3)\right)\neq 0,$
    $\kappa_f'(s_1)\kappa_f^2(s_2)\neq\kappa_f'(s_2)\kappa_f^2(s_1),$
    and 
    $\kappa_f'(s_3)\kappa_f^2(s_4)\neq\kappa_f'(s_4)\kappa_f^2(s_3).$
\end{enumerate}
In the above properties $'$ denote the derivative with respect to the arc length parameter.

Then the set $\mathcal{G}$ is a generic subset of $C^{\infty}(S^1,\mathbb{R}^2)$. 

Furthermore, if $f\in\mathcal{G},$ then the Centre Symmetry Set of $f$ has the following properties.
\begin{enumerate}[(a)]
    \item It is the union of smooth curves with at most cusp singularities.
    \item It has only transversal self-crossings.
    \item It has inflexion points only on double tangents of $f$.
    \item No asymptote of the $\CSS$ is a double asymptote.
    \item In a neighbourhood of an asymptote the curvature function of the $\CSS$ changes sign. 
\end{enumerate}
\end{thm}
\begin{proof}
We will show that each property is generic by the Thom Transversality Theorems -- the one for maps and the one for multijets (see Theorems 4.9 and 4.13 in \cite{GG-Book}). We will denote by $J^k_s(S^1,\mathbb{R}^2)$ the \textit{$s$-fold $k$-jet bundle} and let $(S^1)^{(k)}$ denote the set $\left(S^1\right)^{\!k}\,\setminus\,\left\{(s_i)_{i=1}^{k}\ |\ s_i\neq s_j\text{ if }i\neq j\right\}$.

The genericity of properties (i), (ii), and (v) are proved in \cite{Romero} (see Theorem 2.12) using Thoms Transversality Theorems, too.

We will show that Property (iii) is generic. Let $f\in C^{\infty}(S^1,\mathbb{R}^2)$ be regular. It is easy to calculate that the transversality of the map $j^2_2f:\left(S^1\right)^{(2)}\to J_2^2(S^1,\mathbb{R}^2)$ to the submanifold
\begin{align*}
    \Big\{\left(j^2g(s_1),j^2h(s_2)\right)\in J^2_2(S^1,\mathbb{R}^2)\ \Big|\ &g'(s_1)\neq 0, h'(s_2)\neq 0, \det\left(g'(s_1),h'(s_2)\right)=0, \\  &\det\left(g(s_1)-h(s_2),g'(s_1)\right)=0\Big\}
\end{align*}
is equivalent to $\kappa_g(s_1)\kappa_h(s_2)\neq 0$. Therefore, Property (iii) is generic by the Thom Transversality Theorem. In a similar way one can prove genericity of Properties (iv), (v), and (vii).

Now, let us deal with genericity of Property (vi). It directly follows from the Thom Transversality Theorem and the transversality of the map $j^4_2f:(S^1)^{(2)}\to J^4_2(S^1,\mathbb{R}^2)$ to the submanifold
\begin{align*}
    \Big\{\left(j^4g(s_1),j^4h(s_2)\right)\in J^4(S^1,\mathbb{R}^2)\,\Big|\, g'(s_1)\neq 0, h'(s_2)\neq 0, \det\left(g'(s_1),h'(s_2)\right)=0, \\ 
    \kappa_f'(s_1)\kappa_g^2(s_2)-\kappa_g'(s_2)\kappa_f^2(s_1)=0\}.
\end{align*}

The set 
\small
\begin{align*}
    S_{\text{vi}}:=\Big\{\left(j^3g(s_1),j^3h(s_2)\right)\in J^3(S^1,\mathbb{R}^2)\,\Big|\, g'(s_1)\neq 0, h'(s_2)\neq 0, \det\left(g'(s_1),h'(s_2)\right)=0, \\ 
    \kappa_f'(s_1)\kappa_g^2(s_2)-\kappa_g'(s_2)\kappa_f^2(s_1)=0,  \det\left(g(s_1)-h(s_2),g'(s_1)\right)=0\Big\}.
\end{align*}
is a submanifold of codimension $3$ in $J^3(S^1,\mathbb{R}^2)$. If $j^3f$ is transversal to $S_{\text{vi}}$, then $j^3f$ does not intersect $S_{\text{iv}}$. 

Finally, genericity of Property (vii) follows from the  Thom Transversality Theorem and the transversality of the map $j_4^1f:\left(S^1\right)^{(4)}\to J_4^1\left(S^1,\mathbb{R}^2\right)$ to the following submanifold:
\begin{align*}
    \Big\{&\left(j^1g_k(s_k)\right)_{k=1}^4\in J_4^1\left(S^1,\mathbb{R}^2\right)\,\Big|\, g_k'(s_k)\neq 0\text{ for } k=1,2,3,4, \\ &\det\left(g_1'(s_1),g_2'(s_2)\right)=0, \det\left(g_3'(s_3),g_4'(s_4)\right)=0, \\ &\det\left(g_2(s_2)-g_1(s_1),g_3(s_3)-g_1(s_1)\right)=0, \det\left(g_2(s_2)-g_1(s_1),g_4(s_4)-g_1(s_1)\right)=0\Big\}.
\end{align*}
In a similar way we can prove genericity of the last property. 

Now let's assume that $f$ is in the generic set $\mathcal{G}$ satisfying properties (i-viii).

Note that Property (a) follows From Properties (i-iv) -- see the local parameterization of the Centre Symmetry Set (we need to have at least one non-inflexion point in each parallel pair) and Lemma \ref{CuspsofCSS}. Property (b) is true by virtue of Property (viii). Then, Property (c) follows directly from Property (vi) and Proposition \ref{PropCurvCss}, and the property (d) from Property (vii). Finally, by Property (v) we obtain Property (e), which ends the proof.
\end{proof}

\begin{prop}[Proposition 2.13 in \cite{Romero}]\label{prop:finite_rot_num} Let $M$ be a smooth regular closed curve. If $M$ has only non-degenerate inflexion points and no undulation points, then the rotation number of $M$ and the number of inflexion points of $f$ are finite.
\end{prop}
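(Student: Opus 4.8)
The plan is to reduce everything to elementary facts about the curvature function. First I would reparameterize $M$ by arc length, which is possible since $M$ is a closed regular curve: write $f\colon\mathbb{R}/L\mathbb{Z}\to\mathbb{R}^2$ with $|f'|\equiv 1$, where $L$ is the (finite) length of $M$. Writing $f'=(\cos\theta,\sin\theta)$ for a smooth turning-angle function $\theta$ on $\mathbb{R}$ with $\theta(s+L)=\theta(s)+2\pi\rho$ for some integer $\rho$, one has $\kappa_f=\theta'$ and, by a short computation, $\det\big(f'(s),f'''(s)\big)=\kappa_f'(s)$. Hence the hypothesis that $f$ has only non-degenerate inflexion points and no undulation points translates into the purely analytic statement: every zero of the smooth function $\kappa_f\colon S^1\to\mathbb{R}$ is a \emph{simple} zero. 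Indeed, at an inflexion point one has $\kappa_f=0$ and $\kappa_f'=\det(f',f''')\neq 0$ by non-degeneracy; and "no undulation points" guarantees that every zero of $\kappa_f$ is in fact an inflexion point (in particular there is no subinterval on which $\kappa_f$ vanishes identically, since every interior point of such an interval would be an undulation point). Under this hypothesis the inflexion points of $M$ are therefore exactly the zeros of $\kappa_f$.

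For the number of inflexion points, I would argue by contradiction. If $\kappa_f$ had infinitely many zeros, then, since $S^1$ is compact, the zero set of $\kappa_f$ would have an accumulation point $s_*$; by continuity $\kappa_f(s_*)=0$, so $s_*$ is itself a zero of $\kappa_f$, but it is not isolated. This contradicts the fact, established in the previous step, that every zero of $\kappa_f$ is simple and therefore isolated (a simple zero is isolated via the Taylor expansion $\kappa_f(s)=\kappa_f'(s_*)(s-s_*)+o(s-s_*)$ near $s_*$). Hence $\kappa_f$ has only finitely many zeros, i.e.\ $M$ has only finitely many inflexion points.

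For the rotation number, recall that by definition it is the degree of the unit tangent (Gauss) map $s\mapsto f'(s)/|f'(s)|=(\cos\theta(s),\sin\theta(s))$, equivalently $\dfrac{1}{2\pi}\displaystyle\int_{0}^{L}\kappa_f(s)\,ds=\dfrac{\theta(s+L)-\theta(s)}{2\pi}=\rho$. Since $M$ is a closed regular curve this is a well-defined integer, and since $\kappa_f$ is continuous on the compact curve the integral is finite; hence the rotation number is a finite integer. This part uses only regularity and closedness of $M$, not the non-degeneracy hypotheses.

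The only point requiring care — and the one I would treat as the main obstacle — is the first step: verifying rigorously that the geometric hypotheses "only non-degenerate inflexion points" and "no undulation points" together amount to "$\kappa_f$ has only simple zeros", in particular ruling out an arc on which the curvature vanishes identically, and confirming the identity $\det(f',f''')=\kappa_f'$ in an arc-length parameterization. Once this translation is in place, both finiteness assertions follow from compactness of $S^1$ together with, for the rotation number, the standard theory of the turning number.
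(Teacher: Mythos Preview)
The paper does not supply its own proof of this proposition; it simply cites it as Proposition~2.13 in \cite{Romero}. So there is nothing in the present paper to compare your argument against.

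That said, your proof is correct and is essentially the standard argument. The key translation step is right: in arc-length parameterization one has $f'=(\cos\theta,\sin\theta)$, hence $f'''=\kappa_f'(-\sin\theta,\cos\theta)-\kappa_f^2(\cos\theta,\sin\theta)$ and $\det(f',f''')=\kappa_f'$; together with the paper's definitions of inflexion and undulation points (which are complementary among zeros of $\kappa_f$), the hypotheses reduce exactly to ``every zero of $\kappa_f$ is simple''. The compactness argument for finiteness of inflexion points is then immediate, and your observation that the rotation number is automatically a finite integer for any smooth regular closed curve (via the degree of the Gauss map, or equivalently $\tfrac{1}{2\pi}\int_0^L\kappa_f\,ds$) is correct and indeed does not require the non-degeneracy hypotheses. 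The one cosmetic point: you could shorten the treatment of the ``identically zero on an interval'' case, since any interior point of such an interval is by definition an undulation point and is therefore already excluded.
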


\begin{remark}
Since generic curves satisfy the assumptions of Proposition \ref{prop:finite_rot_num}, the rotation number and the number of inflexion points of each generic curve are finite.
\end{remark}

%%%%%%%%%%%%%%%%%%%%%%%%%%%%%%%%%%%%%%%%%%%%%%%%%%%%%%%%%
%%%%%%%%%%%%%%%%%%%%%%%%%%%%%%%%%%%%%%%%%%%%%%%%%%%%%%%%%
%%%%%%%%%%%%%%%%%%%%%%%%%%%%%%%%%%%%%%%%%%%%%%%%%%%%%%%%%
\section{A Decomposition of a Curve into Parallel Arcs}\label{decomp}

Let's assume that $M$ is a generic regular closed curve. In this section we will adapt an algorithm of decomposition of $M$ into parallel arcs (for further details see Section $3$ in \cite{ZD_Wigner}) that will help us to study the geometry of Centre Symmetry Set of $M$.

We already know that if $M$ is a generic regular closed curve, then $\CSS(M)$ is a union of smooth parameterized curves and double tangents. 
Each of the smooth parameterized curves belonging to $\CSS(M)$ will be called an \textit{smooth semi-branch} of $\CSS(M)$ (see Figure \ref{FigureAlmostSmoothBranches}).

\begin{figure}[h]
\centering
\includegraphics[scale=0.35]{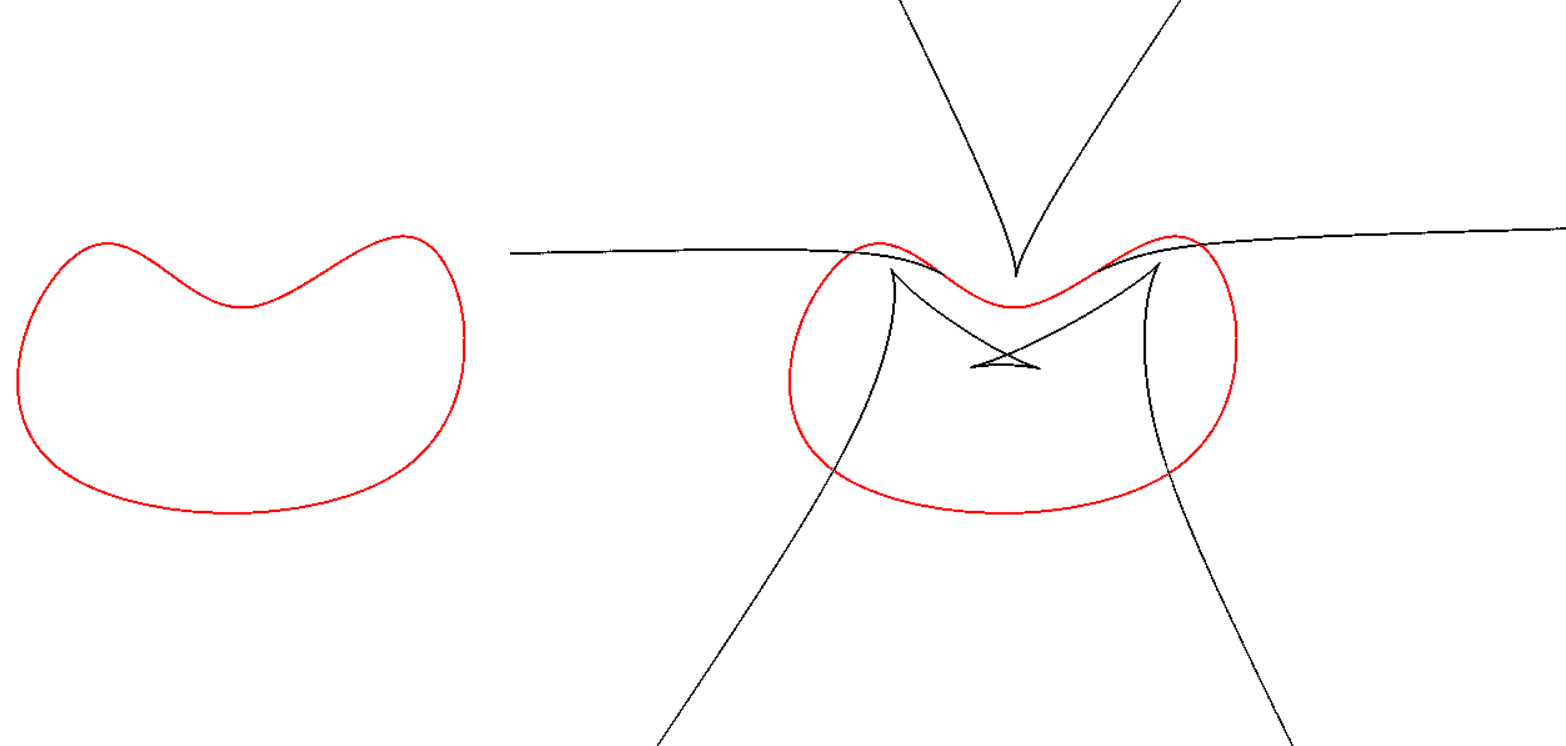}
\caption{A curve $M$ on the left, and $M$ together with $4$ smooth semi-branches of $\CSS(M)$ on the right}
\label{FigureAlmostSmoothBranches}
\end{figure}

Notice that some of the smooth semi-branches might have asymptotes. 
Each of the asymptotes is either a bi-asymptote of one smooth semi-branch or a single asymptote of two different smooth semi-branches. 
When we use the term \textit{bi-asymptote}, we mean that there are exactly two parts of the curve approaching an asymptote.

Smooth semi-branches that have the same asymptote we identify into one smooth semi-branch.
Since $M$ is generic, it follows that we have a finite number of smooth semi-branches. 
After all possible identifications are done, the elements of the resulting set of smooth semi-branches will be called \textit{smooth branches} of $\CSS(M)$.

In Figure \ref{FigureAlmostSmoothBranches} we illustrate a non-convex curve $M$, $\CSS(M)$, and two different smooth branches of $\CSS(M)$ (one smooth branch is created from the left and right smooth semi-branch, the other smooth branch is created from the bottom and top smooth semi-branch).

Informally, we can say that asymptotes connect almost-smooth branches into smooth branches.

\begin{definition}\label{DefAngleFunctin}
Let $S^1\ni s\mapsto f(s)\in\mathbb{R}^2$ be a parameterization of a smooth closed curve $M$, such that $f(s_0)$ is not an inflexion point. A function $\varphi_{M}:S^1\to [0,\pi]$ is called an \textit{angle function of} $M$ if $\varphi_{M}(s)$ is the oriented angle between $f'(s)$ and  $f'(s_0)$ modulo $\pi$. We identify the set $[0,\pi]$ modulo $\pi$ with $S^1$.
\end{definition}

\begin{definition}\label{DefLocalExtrema}
A point $\varphi$ in $S^1$ is a \textit{local extremum }of $\varphi_{M}$ if there exists $s$ in $S^1$ such that $\varphi_{M}(s)=\varphi$, $\varphi'_{M}(s)=0$, $\varphi''_{M}(s)\neq 0.$ The local extremum $\varphi$ of $\varphi_{M}$ is a \textit{local maximum (respectively minimum)} if $\varphi''_{M}(s)<0$ (respectively $\varphi''_{M}>0$). We denote by $\mathcal{M}(\varphi_{M})$ the set of local extrema of $\varphi_{M}$.  
\end{definition}

In what follows let $f$ be the arc length parameterization of $M$ and let $\varphi_{M}$ be the angle function of $M$.
Then, we can quite easily obtain the following properties of the angle function.

\begin{prop}\label{PropAlgAngleFun}
For $M,$ $f,$ and $\varphi_{M}$ defined above the following conditions are true
\begin{enumerate}[(i)]
\item $f(s_1),$ $f(s_2)$ is a parallel pair of $M$ if and only if $\varphi_{M}(s_1)=\varphi_{M}(s_2)$,
\item $\varphi_{M}'(s)$ is equal to the signed curvature of $M$ with respect to $f,$
\item $M$ has an inflexion point at $f(s_0)$ if and only if $\varphi_{M}(s_0)$ is a local extremum,
\item if $\varphi_{M}(s_1),$ $\varphi_{M}(s_2)$ are consecutive local extrema, then one of them is a local maximum and the other one is a local minimum.
\end{enumerate}
\end{prop}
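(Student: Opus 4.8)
The plan is to verify each of the four items directly from Definition \ref{DefAngleFunctin} and Definition \ref{DefLocalExtrema}, using that $f$ is the arc length parameterization, so that $f'(s)=(\cos\theta(s),\sin\theta(s))$ for a smooth lift $\theta$ of the tangent angle with $\theta'(s)=\kappa_f(s)$. For item (ii), I would observe that $\varphi_M(s)$ is by construction $\theta(s)-\theta(s_0)$ reduced modulo $\pi$; away from the (discrete) set where the reduction is discontinuous, $\varphi_M$ is locally just $\theta(s)-\theta(s_0)+k\pi$ for a locally constant integer $k$, hence differentiable with $\varphi_M'(s)=\theta'(s)=\kappa_f(s)$. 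Since $\varphi_M$ is asserted to be smooth (as a map into $S^1=[0,\pi]/\!\sim$), this identity holds everywhere, which is (ii).

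For item (i): $f(s_1),f(s_2)$ is a parallel pair iff $f'(s_1)=\pm f'(s_2)$, i.e. iff $\theta(s_1)-\theta(s_2)\in\pi\mathbb{Z}$, i.e. iff $\theta(s_1)-\theta(s_0)$ and $\theta(s_2)-\theta(s_0)$ are congruent modulo $\pi$, which is exactly $\varphi_M(s_1)=\varphi_M(s_2)$; note the definition explicitly takes the angle between $f'(s)$ and $f'(s_0)$ \emph{modulo} $\pi$, so antiparallel and parallel tangent vectors are identified, matching the definition of parallel pair (which only requires parallel tangent \emph{lines}). For item (iii): by (ii), $\varphi_M'(s_0)=\kappa_f(s_0)$, and $f(s_0)$ is an inflexion point iff $\kappa_f$ changes sign at $s_0$; since $f$ is generic with only non-degenerate inflexions and no undulations (the standing hypothesis of the section, and anyway the natural setting of Definition \ref{DefLocalExtrema}), a zero of $\kappa_f$ is automatically a sign-change with $\kappa_f'(s_0)=\varphi_M''(s_0)\neq 0$, which is precisely the condition for $\varphi_M(s_0)$ to be a local extremum in the sense of Definition \ref{DefLocalExtrema}. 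Conversely a local extremum forces $\varphi_M'(s_0)=\kappa_f(s_0)=0$ with $\varphi_M''(s_0)\neq 0$, hence a non-degenerate inflexion.

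For item (iv): consider the smooth function $\varphi_M:S^1\to S^1$ and its critical points, which by (iii) are the (finitely many, by Proposition \ref{prop:finite_rot_num}) inflexion points; at each, $\varphi_M''\neq 0$, so they alternate in type as $\varphi_M'=\kappa_f$ changes sign exactly at each one. Reading $\varphi_M$ on a small arc of $S^1$ between two consecutive critical points $s_1<s_2$, $\kappa_f=\varphi_M'$ has constant sign there and opposite signs on the two flanking arcs, so $s_1$ and $s_2$ are extrema of opposite type; the only subtlety is that "consecutive local extrema" refers to consecutive \emph{values} $\varphi_M(s_1),\varphi_M(s_2)$ in $S^1$ rather than consecutive parameters, but since between two parameter-consecutive inflexions $\varphi_M$ is strictly monotone, its value there sweeps an arc of $S^1$ containing no other extremal value, so value-consecutive and parameter-consecutive coincide locally. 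I expect item (iv), and more precisely this bookkeeping about "consecutive" meaning consecutive in the target circle $S^1$ versus in the source, to be the only real point requiring care; items (i)--(iii) are immediate once (ii) is established, and (ii) itself is essentially a restatement of the definition. I would therefore spend most of the written proof on (ii) and (iv) and dispatch (i) and (iii) in a line or two each.
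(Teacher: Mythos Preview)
The paper does not prove this proposition; it simply remarks beforehand that ``we can quite easily obtain the following properties of the angle function'' and then states it without proof. Your proposal supplies exactly this routine verification, and items (i)--(iii) are handled correctly via the lift $\theta$ with $\theta'=\kappa_f$.

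For item (iv), the core of your argument --- that $\varphi_M'=\kappa_f$ has constant sign between two parameter-consecutive critical points and changes sign at each one, so maxima and minima alternate along the \emph{source} $S^1$ --- is correct and is the intended content of (iv). However, your additional claim that the arc swept out by $\varphi_M$ between two parameter-consecutive inflexions ``contain[s] no other extremal value'' is not justified and is in general false: nothing prevents a critical value coming from an inflexion point elsewhere on $M$ from landing inside that arc. Indeed, if one insists on reading ``consecutive'' in the sense of the later Definition~\ref{DefSeqOfLocExtr} (cyclic order in the target $S^1$), the alternation need not hold; one can write down closed curves of rotation number~$0$ with four non-degenerate inflexion points whose critical values, listed in cyclic order on the target, read min, min, max, max. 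So for (iv) you should simply give the parameter-consecutive argument and drop the attempted reconciliation with the target ordering; that reading is all that is used afterwards (e.g.\ to deduce that the number of inflexion points is even).
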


\begin{lemma}[Lemma 3.4 in \cite{ZD_Wigner}] \label{LemmaAlgEvenNumOfInfl}
The function $\varphi_{M}$ has an even number of local extrema. In other words, $M$ has an even number of inflexion points.
\end{lemma}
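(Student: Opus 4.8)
The plan is to read everything off the derivative $\varphi_M'=\kappa_M$ and then run a parity argument around the parameter circle $S^1$. First I would set up the dictionary supplied by Proposition \ref{PropAlgAngleFun}: part (ii) gives $\varphi_M'(s)=\kappa_M(s)$, and part (iii) identifies the local extrema of $\varphi_M$ with the inflexion points of $M$. Under the standing hypotheses on $M$ (regular closed curve, only non-degenerate inflexion points, no undulation points) every zero $s_0$ of $\kappa_M$ is a sign-changing zero with $\kappa_M'(s_0)=\varphi_M''(s_0)\neq 0$; equivalently, every critical point of $\varphi_M$ is non-degenerate and is a genuine local extremum in the sense of Definition \ref{DefLocalExtrema}. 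In particular there are no degenerate critical points hiding between two extrema, and by Proposition \ref{prop:finite_rot_num} the number of them is finite. If this number is $0$ there is nothing to prove; otherwise list the inflexion points (equivalently, the parameter values at which $\kappa_M$ vanishes) as $s_1,\dots,s_n$, in cyclic order along $S^1$.

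Next I would invoke the alternation property. By Proposition \ref{PropAlgAngleFun}(iv), of any two consecutive local extrema one is a local maximum and the other a local minimum; since this applies to every consecutive pair along $S^1$, including the wrap-around pair $(s_n,s_1)$, the cyclic word of types $\bigl(\mathrm{type}(s_1),\dots,\mathrm{type}(s_n)\bigr)\in\{\max,\min\}^{n}$ has the property that any two neighbours (cyclically) differ. A cyclic binary string in which every two neighbours differ has even length: traversing it once, the letter flips at each step and must return to its starting value after $n$ flips, forcing $n$ to be even. Equivalently, $\kappa_M=\varphi_M'$ is a smooth real-valued function on $S^1$ all of whose zeros are simple, its sign changes at each of the $n$ zeros, and the sign must agree with itself after a full loop, so $n$ is even. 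Hence $\varphi_M$ has an even number of local extrema and $M$ has an even number of inflexion points.

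The argument is essentially topological, so there is no computational obstacle; the only point that genuinely needs care is the input that \emph{all} critical points of $\varphi_M$ are non-degenerate, sign-changing points. This is exactly what excludes degenerate critical behaviour of $\varphi_M$ (an ``undulation'' of the angle function) wedged between two consecutive extrema, which would break the alternation of Proposition \ref{PropAlgAngleFun}(iv), and it is also what makes the count finite via Proposition \ref{prop:finite_rot_num}. I would therefore state those hypotheses on $M$ explicitly before the parity step and note that they hold for generic curves.
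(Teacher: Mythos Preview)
Your argument is correct. The paper does not supply its own proof of this lemma; it is quoted verbatim as Lemma~3.4 of \cite{ZD_Wigner} and left unproved here, so there is nothing to compare against line by line. That said, your reasoning is exactly the intended one: once Proposition~\ref{PropAlgAngleFun}(ii)--(iv) are in place, the alternation of maxima and minima around the closed parameter circle forces the count to be even, and your alternative phrasing via the sign of $\kappa_M=\varphi_M'$ on $S^1$ is an equivalent and equally clean way to see it. The only caveat worth tightening is that Proposition~\ref{PropAlgAngleFun}(iv) already presupposes the non-degeneracy/no-undulation hypotheses you spell out, so invoking (iv) and then separately justifying those hypotheses is slightly redundant; either route (direct sign-change of $\kappa_M$, or alternation via (iv)) suffices on its own.
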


\begin{definition}\label{DefSeqOfLocExtr}
The \textit{sequence of local extrema} is the sequence $(\varphi_0, \varphi_1, \ldots, \varphi_{2n-1})$ where $\{\varphi_0, \varphi_1, \ldots, \varphi_{2n-1}\}=\mathcal{M}(\varphi_{M})$ and the order is compatible with the orientation of $S^1=\varphi_{M}(S^1)$.
\end{definition} 

\begin{definition}\label{DefSeqOfParallPts}
The \textit{sequence of division points} $\mathcal{S}_{M}$ is the following sequence $(s_0, s_1, \ldots, s_{k-1})$, where $\{s_0, s_1, \ldots, s_{k-1}\}=\varphi_{M}^{-1}\left(\mathcal{M}(\varphi_{M})\right)$ if $\mathcal{M}(\varphi_{M})$ is not empty, otherwise $\{s_0, s_1, \ldots, s_{k-1}\}=\varphi_{M}^{-1}\left(\varphi_{M}(s_0)\right),$ where $s_0$ is fixed. The order of $\mathcal{S}_{M}$ is compatible with the orientation of $M$.   
\end{definition}

Let $M$ have inflexion points. If $s_k$ belongs to the sequence of division points, then $f(s_k)$ is an inflexion point or a point which is parallel to an inflexion point. 

In the case when $M$ has no inflexion points, the sequence of division points consists of a fixed $s_0$ and points $s_k$ such that $f(s_0),$ $f(s_k)$ are parallel pairs.

\begin{definition}
Two arcs $\mathcal{A}_1,$ $\mathcal{A}_2$ of a curve $M$ are called \textit{parallel arcs} if there exists a bijection $\mathcal{A}_1\ni p\mapsto\omega(p)\in\mathcal{A}_2$ such that $p,\omega(p)$ is a parallel pair of $M.$ 
\end{definition}

We can see that the curve $M$ is divided into arcs by the images of points in the sequence of division points. The set of these arcs splits into subsets contained of mutually parallel arcs
(see Definition \ref{DefSetParallArcs}).
Figure \ref{PictureAlgFindingPhiSets}  illustrates a closed regular curve $M$, the sequence of division points, and the angle function $\varphi_{M}$.

\begin{figure}[h]
\centering
\includegraphics[scale=0.35]{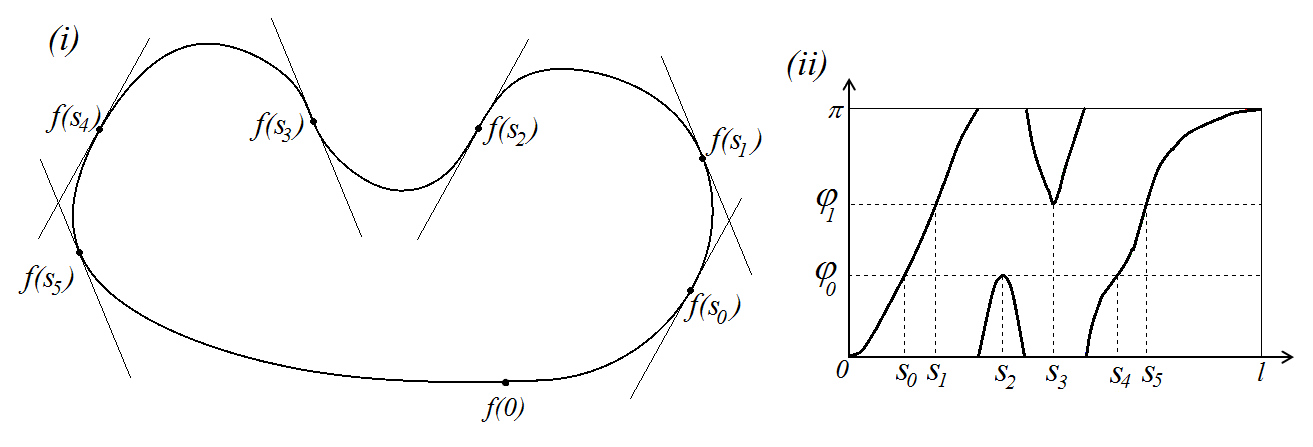}
\caption{(i) A closed regular curve $M$ with points $f(s_i)$ and lines tangent to $M$ at these points, (ii) a graph of the angle function $\varphi_{M}$ with values of $\varphi_i$ and $s_i$ (\cite{ZD_Wigner})}
\label{PictureAlgFindingPhiSets}
\end{figure}

\begin{prop}[Proposition 3.7 in \cite{ZD_Wigner}]\label{PropParalllel-infexion}
If $M$ is a generic regular closed curve and $a\in M$ is an inflexion point then the number of points $b\in M$, such that $b\neq a$ and $a$, $b$ is a parallel pair, is even.  
\end{prop}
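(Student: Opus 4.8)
By genericity, the inflexion point $a=f(s_0)$ is non-degenerate, so $\varphi_M$ has a non-degenerate local extremum at $s_0$; put $c:=\varphi_M(s_0)\in S^1$. By Proposition \ref{PropAlgAngleFun}(i) the points $b\neq a$ forming a parallel pair with $a$ are exactly the points $f(s)$ with $\varphi_M(s)=c$ and $s\neq s_0$, so it suffices to show that $\#\big(\varphi_M^{-1}(c)\setminus\{s_0\}\big)$ is even. First I would record three preliminary facts about the fibre $\varphi_M^{-1}(c)$: (1) $s_0$ is its only critical point of $\varphi_M$, and $\varphi_M'\neq 0$ at every other point of it — this follows from Proposition \ref{PropAlgAngleFun}(iii) (critical points of $\varphi_M$ are exactly the inflexion points of $M$) together with property (ii) of Theorem \ref{ThmGenericCSS} (in any parallel pair at most one point is an inflexion point); (2) $\varphi_M^{-1}(c)$ is finite — its regular part is discrete by the inverse function theorem, it cannot accumulate at $s_0$ (Rolle's theorem plus the non-degeneracy of the extremum would otherwise produce critical points accumulating at $s_0$), and $S^1$ is compact; (3) by Proposition \ref{prop:finite_rot_num} the rotation number $\nu$ of $M$ is finite. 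Write $\varphi_M^{-1}(c)\setminus\{s_0\}=\{s_1,\dots,s_m\}$.

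The main step is a degree count for $\varphi_M$ viewed as a smooth map $S^1\to S^1$ (the target being $[0,\pi]$ with endpoints identified). Lifting the angle of the unit velocity vector of $M$ shows that over one period this lift increases by $2\pi\nu$, i.e. $\deg\varphi_M=2\nu$; in particular $\deg\varphi_M$ is even. Now choose a regular value $c'$ of $\varphi_M$ close to $c$, lying on whichever side of $c$ makes the local fibre of $\varphi_M$ near $s_0$ either empty or equal to two points at which $\varphi_M'$ has opposite signs (possible because the extremum at $s_0$ is non-degenerate). For $c'$ close enough, $\varphi_M^{-1}(c')$ is contained in prescribed small neighbourhoods of the finitely many points of $\varphi_M^{-1}(c)$, and near each $s_i$ ($i\ge 1$) it consists of a single point with $\operatorname{sgn}\varphi_M'$ equal to $\operatorname{sgn}\varphi_M'(s_i)$. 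Computing $\deg\varphi_M$ from the regular fibre $\varphi_M^{-1}(c')$ then gives
\[
2\nu=\deg\varphi_M=\sum_{s\in\varphi_M^{-1}(c')}\operatorname{sgn}\varphi_M'(s)=0+\sum_{i=1}^{m}\operatorname{sgn}\varphi_M'(s_i),
\]
the leading $0$ being the contribution of the points near $s_0$. Hence $\sum_{i=1}^m\operatorname{sgn}\varphi_M'(s_i)=2\nu$ is an even integer written as a sum of $m$ terms each equal to $\pm1$, so $m$ is even, which is the claim.

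I expect the substantive content to lie not in any single difficult estimate but in assembling the standard-but-delicate points cleanly: the identity $\deg\varphi_M=2\nu$ (hence even), the finiteness of $\varphi_M^{-1}(c)$, and the localization of the nearby regular fibre $\varphi_M^{-1}(c')$ around $\varphi_M^{-1}(c)$. An alternative route, closer in spirit to the decomposition into parallel arcs in Section \ref{decomp}, is to cut $S^1$ at the division points into arcs on which $\varphi_M$ is strictly monotone and track directly how the level $c$ is entered and left; this avoids degree theory but is longer to write, so I would present the degree argument above.
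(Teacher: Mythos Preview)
Your argument is correct. The identity $\deg\varphi_M=2\nu$ is immediate since reducing the tangent angle modulo $\pi$ doubles the winding, and the parity of $m$ then drops out of the degree formula at a nearby regular value $c'$, the non-degenerate extremum at $s_0$ contributing zero in either case (empty local fibre, or two preimages with opposite signs of $\varphi_M'$). The finiteness and localisation facts you record as preliminaries are routine, exactly as you indicate.

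There is, however, no proof in the present paper to compare against: the proposition is simply quoted from \cite{ZD_Wigner} without argument. That reference is the source of the angle-function and parallel-arc machinery reproduced in Section~\ref{decomp}, and the proof there runs along the combinatorial lines you sketch in your final paragraph---cutting the source circle at the inflexion points into maximal arcs on which $\varphi_M$ is strictly monotone and counting directly how many of them meet the level $c$. Your degree-theoretic version is a tighter repackaging of the same count: it trades the explicit arc bookkeeping for a single topological invariant, at the modest cost of invoking the degree of a circle map. Either route is short; yours is the more conceptual.
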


The Proposition \ref{PropAlgAngleFun} (iv) implies that the number of inflexion points of a generic regular closed curve is even. Thus, using Proposition \ref{PropParalllel-infexion} we have the following corollary.

\begin{corollary}\label{PropMisEven}
If $M$ is a generic regular closed curve then the number of elements in the sequence $\mathcal{S}_M$ is even.
\end{corollary}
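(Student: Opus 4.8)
The plan is to read off $|\mathcal{S}_M|$ directly from Definition~\ref{DefSeqOfParallPts}, which identifies it with the number of points in a fibre of the angle function $\varphi_M$, and then to split into the two cases of that definition.

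\emph{Case 1: $M$ has inflexion points.} Here $\mathcal{M}(\varphi_M)\neq\varnothing$ and $\mathcal{S}_M=\varphi_M^{-1}\!\bigl(\mathcal{M}(\varphi_M)\bigr)$, so $|\mathcal{S}_M|=\sum_{\varphi\in\mathcal{M}(\varphi_M)}\bigl|\varphi_M^{-1}(\varphi)\bigr|$. First I would check that $s\mapsto\varphi_M(s)$ induces a bijection between the inflexion points of $M$ and the set $\mathcal{M}(\varphi_M)$: that inflexion points are the only critical points of $\varphi_M$ and that each maps to a local extremum (and conversely) follows from Proposition~\ref{PropAlgAngleFun}(ii)--(iii) together with genericity property~(i) of Theorem~\ref{ThmGenericCSS} (non-degenerate inflexions, no undulations), while injectivity follows from genericity property~(ii) (no parallel pair of two inflexion points) via Proposition~\ref{PropAlgAngleFun}(i). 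Since the number of inflexion points of $M$ is even by Proposition~\ref{PropAlgAngleFun}(iv) (equivalently Lemma~\ref{LemmaAlgEvenNumOfInfl}), we get $|\mathcal{M}(\varphi_M)|=2n$ for some $n$. Next, for $\varphi\in\mathcal{M}(\varphi_M)$ write $\varphi=\varphi_M(s_\varphi)$, where $f(s_\varphi)$ is the (unique, by the bijection) inflexion point over $\varphi$; by Proposition~\ref{PropAlgAngleFun}(i) the fibre $\varphi_M^{-1}(\varphi)$ consists of $s_\varphi$ together with those parameters $s\neq s_\varphi$ for which $f(s),f(s_\varphi)$ is a parallel pair, and by Proposition~\ref{PropParalllel-infexion} the number of the latter is even. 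Hence each $\bigl|\varphi_M^{-1}(\varphi)\bigr|$ is odd, and $|\mathcal{S}_M|$ is a sum of $2n$ odd integers, therefore even.

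\emph{Case 2: $M$ has no inflexion points.} Then $\mathcal{M}(\varphi_M)=\varnothing$ and $\mathcal{S}_M=\varphi_M^{-1}\!\bigl(\varphi_M(s_0)\bigr)$ for the fixed base parameter $s_0$. Now $\varphi_M'=\kappa_M$ has constant sign by Proposition~\ref{PropAlgAngleFun}(ii) (genericity also rules out undulations), so $M$ is convex and its rotation number $\rho$ is nonzero. Passing to the tangent-angle lift $\theta$ (with $\theta'=\kappa_M$ strictly monotone and total increment $2\pi\rho$ around $M$), the equation $\theta(s)\equiv\theta(s_0)\pmod{\pi}$ has exactly $2|\rho|$ solutions on $S^1$, so $|\mathcal{S}_M|=2|\rho|$, which is even. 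Finiteness of all counts involved is guaranteed by Proposition~\ref{prop:finite_rot_num} and the remark following it.

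The argument is essentially a parity count, so there is no serious obstacle; the one point needing care is in Case~1, where one must verify that over each local extremum of $\varphi_M$ there is a \emph{single} critical preimage, that it corresponds to an inflexion point of $M$, and that all of its parallel partners are \emph{regular} points of $\varphi_M$, so that Proposition~\ref{PropParalllel-infexion} really accounts for \emph{all} of the remaining points of the fibre. Each of these is an immediate consequence of genericity conditions (i)--(ii) of Theorem~\ref{ThmGenericCSS}, but they should be spelled out rather than left implicit.
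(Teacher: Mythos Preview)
Your proposal is correct and follows essentially the same approach as the paper: the paper's justification is simply the sentence preceding the corollary, invoking Proposition~\ref{PropAlgAngleFun}(iv) (even number of inflexion points) and Proposition~\ref{PropParalllel-infexion} (even number of parallel partners of each inflexion point), which is exactly your Case~1 parity count. Your write-up is in fact more complete than the paper's, since you also treat Case~2 (no inflexion points) explicitly via the rotation number, whereas the paper's one-line justification leaves that case implicit.
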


As a consequence of the Corollary \ref{PropMisEven} and since we assumed that $M$ is a generic regular closed curve, we can denote the number of elements in the sequence of division points as $\#\mathcal{S}_{M}=2m$.

Now, we can define functions $\minm_{2m},$  $\maxm_{2m}:\{0,1,\ldots,2m-1\}^2\to\{0,1,\ldots,2m-1\}$ as
\begin{align*}
\minm_{2m}(k,l):&=\left\{\begin{array}{ll}2m-1, &\text{ if }\{k,l\}=\{0, 2m-1\},\\ \min(k,l), &\text{ otherwise},\end{array}\right.\\
\maxm_{2m}(k,l):&=\left\{\begin{array}{ll}0, &\text{ if }\{k,l\}=\{0, 2m-1\},\\ \max(k,l), &\text{ otherwise}.\end{array}\right.
\end{align*}
As we can see, $\minm_{2m}$ is an analog of the minimum function modulo $2m$, and $\maxm_{2m}$ is an analog of the maximum function modulo $2m.$
Let $L_{M}$ be the length of $M.$ Then, we denote  an interval $(s_{2m-1}, L_{M}+s_0)$ by  $(s_{2m-1},s_0).$

\begin{definition}\label{DefSetParallArcs}
If $\mathcal{M}(\varphi_{M})=\{\varphi_0, \varphi_1, \ldots, \varphi_{2n-1}\}$, then for every $i\in\{0, 1, \ldots, 2n-1\}$, the \textit{set of parallel arcs} $\Phi_i$ is the following set
\begin{align*}
\Phi_i=\Big\{\overarc{\p_k}{\p_l}\ \big|\ &k-l=\pm 1\ \mbox{mod} (2m),\ \varphi_{M}(s_k)=\varphi_i,\ \varphi_{M}(s_l)=\varphi_{i+1}, \\ &\varphi_{M}\big((s_{\minm_{2m}(k,l)}, s_{\maxm_{2m}(k,l)})\big)=(\varphi_i, \varphi_{i+1})\Big\},
\end{align*}
where $\p_k=f(s_k)$ and $\overarc{\p_k}{\p_l}=f\left(\big[s_{\minm_{2m}(k, l)}, s_{\maxm_{2m}(k, l)}\big]\right)$.

\noindent If $\mathcal{M}(\varphi_{M})$ is empty then we define only one \textit{set of parallel arcs} as follows:
\begin{align*}
\Phi_0=\big\{\overarc{\p_0}{\p_1},  \overarc{\p_1}{\p_2}, \ldots,  \overarc{\p_{2m-2}}{\p_{2m-1}},  \overarc{\p_{2m-1}}{\p_0}\big\}.
\end{align*}
\end{definition}

In the above definition indexes $i$ in $\varphi_i$ are computed modulo $2n$, indexes $j, j+1$ in $\overarc{\p_{j}}{\p_{j+1}}$ and $\overarc{\p_{j+1}}{\p_{j}}$ are computed modulo $2m$.
The set of parallel arcs has the following property.

\begin{prop}\label{PropDiffeoBetweenParallelArcs}
Let $f:S^1\to\mathbb{R}^2$ be the arc length parameterization of $M$.
For every two arcs $\overarc{\p_{k}}{\p_{l}}$, $\overarc{\p_{k'}}{\p_{l'}}$ in $\Phi_i$ the well defined map
\begin{align*}
\overarc{\p_{k}}{\p_{l}}\ni p\mapsto P(p)\in\overarc{\p_{k'}}{\p_{l'}},
\end{align*}
where the pair $p, P(p)$ is a parallel pair of $M$, is a diffeomorphism.
\end{prop}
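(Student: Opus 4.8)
The plan is to use the angle function $\varphi_M$ as a common coordinate for all arcs in a given set of parallel arcs $\Phi_i$, and then to show that each arc is a graph over the corresponding interval of angle values.

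\medskip

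\noindent\textbf{Setup.} Fix $\Phi_i$ with associated consecutive local extrema $\varphi_i,\varphi_{i+1}$ of $\varphi_M$ (in the degenerate case $\mathcal{M}(\varphi_M)=\emptyset$ the argument is the same with $S^1$ in place of the interval $(\varphi_i,\varphi_{i+1})$). By Definition \ref{DefSetParallArcs}, for each arc $\overarc{\p_k}{\p_l}\in\Phi_i$ the restriction of $\varphi_M$ to the corresponding open arc of parameters maps bijectively onto $(\varphi_i,\varphi_{i+1})$. First I would argue that on such an arc $\varphi_M$ is strictly monotone: since $\varphi_i,\varphi_{i+1}$ are consecutive local extrema and, by Proposition \ref{PropAlgAngleFun}(iii), $M$ has no inflexion point in the interior of the arc, $\varphi_M'=\kappa_M$ does not vanish there, so $\varphi_M$ is a strictly monotone (hence invertible) smooth function from the open parameter interval onto $(\varphi_i,\varphi_{i+1})$, with nonvanishing derivative. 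Extending continuously to the closed arc, $\varphi_M$ restricts to a homeomorphism $\overarc{\p_k}{\p_l}\to[\varphi_i,\varphi_{i+1}]$ (identifying the arc with its parameter interval via the arc length parameterization $f$, which is a diffeomorphism onto its image).

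\medskip

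\noindent\textbf{The map $P$ is well defined.} Given $p\in\overarc{\p_k}{\p_l}$, by Proposition \ref{PropAlgAngleFun}(i) a point $q\in M$ is parallel to $p$ exactly when $\varphi_M(q)=\varphi_M(p)$. Among all such $q$, exactly one lies on $\overarc{\p_{k'}}{\p_{l'}}$: existence because $\varphi_M$ restricted to $\overarc{\p_{k'}}{\p_{l'}}$ surjects onto $[\varphi_i,\varphi_{i+1}]\ni\varphi_M(p)$, uniqueness because that restriction is injective. This defines $P(p)$. At the endpoints one must check the matching is consistent with the labelling of division points, which is immediate from the construction of $\Phi_i$ (endpoints of arcs in $\Phi_i$ are exactly the division points mapping to $\varphi_i$ or $\varphi_{i+1}$), so $P$ is a well defined bijection $\overarc{\p_k}{\p_l}\to\overarc{\p_{k'}}{\p_{l'}}$.

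\medskip

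\noindent\textbf{Smoothness.} In terms of the arc length parameters, write $s\mapsto \sigma(s)$ for $P$ expressed via $f$; then $\varphi_M(\sigma(s))=\varphi_M(s)$, i.e. $\sigma=(\varphi_M|_{\overarc{\p_{k'}}{\p_{l'}}})^{-1}\circ(\varphi_M|_{\overarc{\p_k}{\p_l}})$. On the open arcs both factors are smooth with nonvanishing derivative (the inverse function theorem applies because $\varphi_M'=\kappa_M\neq0$ away from inflexion points), so $\sigma$ is a diffeomorphism there, with $\sigma'(s)=\kappa_M(f(s))/\kappa_M(f(\sigma(s)))$ — this is exactly formula \eqref{t_prim}. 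It remains to extend across the endpoints that are inflexion points. This is the main obstacle: at an inflexion point $\varphi_M'$ vanishes, so the inverse function theorem fails and $\sigma'$ is an apparent $0/0$. Here I would use genericity: by property (i) of Theorem \ref{ThmGenericCSS} every inflexion point is non-degenerate, meaning $\kappa_M$ has a simple zero there, so $\kappa_M(s)=c(s-s_*)(1+o(1))$ with $c\neq0$ near an inflexion parameter $s_*$; the partner arc's endpoint is likewise a non-degenerate inflexion point or else a regular point parallel to one. In the first case both numerator and denominator of $\sigma'$ have simple zeros and the ratio extends to a nonzero smooth limit (a standard division-of-smooth-functions / Hadamard's lemma argument applied to $\varphi_M(s)-\varphi_i$, which has a quadratic vanishing); in the latter case, only the partner point can be parallel to an inflexion point among the two endpoints — but property (ii) forbids both members of a parallel pair from being inflexion points, and one checks the configuration in $\Phi_i$ actually forces the two endpoints of parallel arcs to be simultaneously inflexion points or simultaneously non-inflexion, so this mixed case does not arise. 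Thus $\sigma$ extends to a smooth map on the closed parameter interval with nonvanishing derivative, and the same applies to $\sigma^{-1}=P^{-1}$ by symmetry, proving $P$ is a diffeomorphism. $\qquad\square$
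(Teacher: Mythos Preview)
The paper states this proposition without proof, so there is nothing to compare your argument against directly. Your approach on the open arcs is correct and is the natural one: by Remark~\ref{RemInflexionInScheme} there are no inflexion points in the interior of any arc of $\Phi_i$, so $\varphi_M'=\kappa_M$ is nonvanishing there, each restriction of $\varphi_M$ to an arc is a diffeomorphism onto $(\varphi_i,\varphi_{i+1})$, and $P$ is the composition of one such map with the inverse of another. This interior statement is what the rest of the paper actually uses.

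Your endpoint analysis, however, contains a genuine error. You assert that ``the configuration in $\Phi_i$ actually forces the two endpoints of parallel arcs to be simultaneously inflexion points or simultaneously non-inflexion, so this mixed case does not arise.'' The opposite is true. A local extreme value $\varphi_i$ of $\varphi_M$ is attained at exactly one inflexion point of $M$; every other preimage of $\varphi_i$ is a non-inflexion point parallel to it (and property~(ii) of Theorem~\ref{ThmGenericCSS} forbids two parallel points from both being inflexion points). Hence among the arcs of $\Phi_i$ at most one has an inflexion point as its $\varphi_i$-endpoint, and the mixed case is the typical one. In that case, writing $\sigma(s)$ for the parameter map from the arc with inflexion endpoint $p_k$ to the arc with non-inflexion endpoint $p_{k'}$, one has $\varphi_M(s)-\varphi_i\sim c\,(s-s_k)^2$ while $\varphi_M(\sigma)-\varphi_i\sim c'\,(\sigma-s_{k'})$, so that $\sigma-s_{k'}\sim C\,(s-s_k)^2$. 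Thus $P$ is smooth with $\sigma'(s_k)=0$, but $P^{-1}$ has a square-root singularity and is not differentiable at $p_{k'}$. In other words $P$ fails to be a diffeomorphism of the \emph{closed} arcs in this situation, and no argument can repair this; the proposition should be read as a statement about the open arcs (equivalently, about pairs of arcs neither of whose matched endpoints is an inflexion point).
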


\begin{definition}\label{DefGlueingScheme}
Let $\overarc{\p_{k_1}}{\p_{k_2}}$, $\overarc{\p_{l_1}}{\p_{l_2}}$ belong to the same set of parallel arcs, then $\begin{array}{ccc}
\p_{k_1} &\frown&\p_{k_2} \\ \hline
\p_{l_1} &\frown&\p_{l_2} \\ \hline \end{array}$ denotes the following set (the \textit{arc}) 
\begin{align*}
\mbox{cl}\Big\{(a,b)\in M\times M\ \Big|\ &a\in\overarc{\p_{k_1}}{\p_{k_2}}, b\in\overarc{\p_{l_1}}{\p_{l_2}},\ a,b\text{ is a parallel pair of } M \Big\}.
\end{align*}

Additionally we will denote $\displaystyle\bigcup_{i=1}^{n-1}\begin{array}{ccc} 
\p_{k_i} &\frown &\p_{k_{i+1}} \\ \hline
\p_{l_i} &\frown &\p_{l_{i+1}} \\ \hline \end{array}$ by
$\begin{array}{ccccc}
\p_{k_1} &\frown &\ldots &\frown & \p_{k_n}\\ \hline
\p_{l_1} &\frown &\ldots &\frown& \p_{l_n}\\ \hline \end{array}.$
This set will be called a \textit{glueing scheme}.
\end{definition}

\begin{remark}\label{RemInflexionInScheme}
If $\overarc{\p_k}{\p_l}$ belongs to the set of parallel arcs, then  there are neither inflexion points nor points with tangent lines parallel to tangent lines at inflexion points of $M$ in $\overarc{\p_k}{\p_l}\setminus\{\p_k, \p_l\}$.
\end{remark}

\begin{definition}
Let $a, b \in M$ be a standard pair of points. If $\kappa_M(a) + \kappa_M(b) \neq 0,$ then we define the $\CSS\text{-}\textit{point map}$ as the map
\begin{align*}
\pi_{\CSS}(a,b) = 
\dfrac{\kappa_M(a) \cdot a + \kappa_M(b) \cdot b}{\kappa_M(a) + \kappa_M(b)}.
\end{align*}
\end{definition}

If $\kappa_M(a) + \kappa_M(b) = 0,$ the $\CSS\text{-}$point map would produce an asymptote of $\CSS(M)$ (see Remark \ref{rem:asymptote}).

Let $\mathcal{A}_1=\overarc{\p_{k_1}}{\p_{k_2}}$ and $\mathcal{A}_2=\overarc{\p_{l_1}}{\p_{l_2}}$ be two arcs of $M$ which belong to the same set of parallel arcs. Then $\CSS \Big(\mathcal{A}_1\cup\mathcal{A}_2\Big) = \pi_{\CSS} \left( \, \begin{array}{ccc}
\p_{k_1} &\frown&\p_{k_2} \\ \hline
\p_{l_1} &\frown&\p_{l_2} \\ \hline \end{array} \, \right)$ (see Figure \ref{FigEqFromParallelArcs}). 

\begin{figure}[h]
\centering
\includegraphics[scale=0.45]{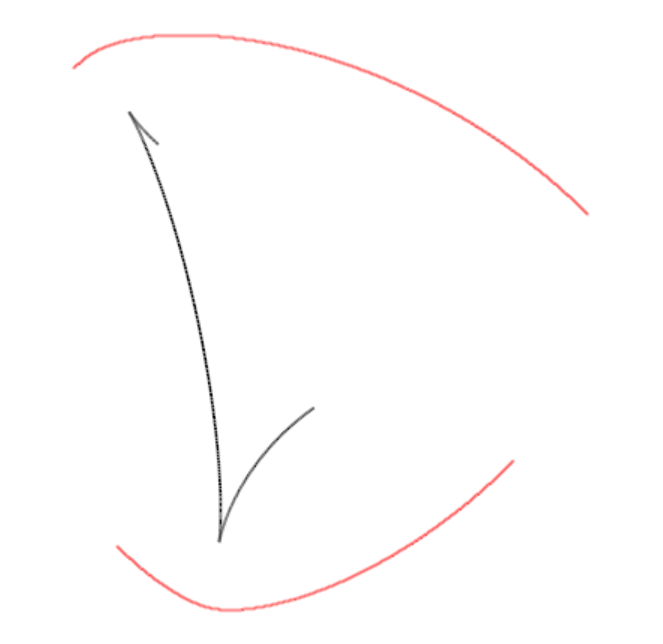}
\caption{Two parallel arcs $\mathcal{A}_1,\mathcal{A}_2$ and $\CSS(\mathcal{A}_1\cup\mathcal{A}_2)$}
\label{FigEqFromParallelArcs}
\end{figure}

As a consequence of this note, we obtain Proposition \ref{PropNumDiffArcs}.

\begin{prop}\label{PropNumDiffArcs}
The Centre Symmetry Set $\CSS(M)$ is the image of the union of \linebreak 
$\displaystyle \sum_{i}{\#\Phi_i\choose 2}$ different arcs under the $\CSS\text{-}\text{point map}$ $\pi_{\CSS}$.
\end{prop}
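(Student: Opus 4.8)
The plan is to count, set by set of parallel arcs, the number of two-arc sub-schemes that the $\CSS\text{-}$point map acts upon. The key observation preceding the statement already establishes the essential geometric input: for any two arcs $\mathcal{A}_1=\overarc{\p_{k_1}}{\p_{k_2}}$ and $\mathcal{A}_2=\overarc{\p_{l_1}}{\p_{l_2}}$ lying in the same set of parallel arcs $\Phi_i$, one has $\CSS(\mathcal{A}_1\cup\mathcal{A}_2)=\pi_{\CSS}\left(\begin{array}{ccc}\p_{k_1}&\frown&\p_{k_2}\\ \hline \p_{l_1}&\frown&\p_{l_2}\\ \hline\end{array}\right)$. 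So the contribution of $\Phi_i$ to $\CSS(M)$ comes exactly from the unordered pairs of distinct arcs in $\Phi_i$, and there are $\binom{\#\Phi_i}{2}$ such pairs.

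First I would recall that, by the decomposition constructed in Section \ref{decomp}, the curve $M$ is subdivided by the images of the division points $\mathcal{S}_M$ into finitely many arcs, and each such arc belongs to exactly one set of parallel arcs $\Phi_i$ (this is the content of Definition \ref{DefSetParallArcs} together with Proposition \ref{PropDiffeoBetweenParallelArcs}, which guarantees that "being parallel" is an equivalence-type relation realized as a diffeomorphism on each $\Phi_i$). Next I would argue that every parallel pair $a,b$ of $M$ — in particular every standard pair contributing a point of $\CSS(M)$ via $\pi_{\CSS}$, and every parallel pair giving rise to an asymptote (which by the identification convention is a limit of $\pi_{\CSS}$-values) — consists of two points lying on arcs belonging to the \emph{same} $\Phi_i$: indeed if $a\in\mathcal{A}_1$ and $b\in\mathcal{A}_2$ with $a,b$ parallel, then $\varphi_M$ takes the same value at the corresponding parameters, so $\mathcal{A}_1$ and $\mathcal{A}_2$ sweep the same interval $(\varphi_i,\varphi_{i+1})$ of angle values, hence lie in the same $\Phi_i$. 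Here one must be a little careful about parallel pairs where one or both points are division points (inflexion points or points parallel to inflexion points); by Remark \ref{RemInflexionInScheme} these occur only at the endpoints of arcs, so they appear as boundary points in the closure defining the scheme and do not produce new contributions. Consequently
$$\CSS(M)=\bigcup_i\ \bigcup_{\{\mathcal{A}_1,\mathcal{A}_2\}\subset\Phi_i,\ \mathcal{A}_1\neq\mathcal{A}_2}\pi_{\CSS}\left(\begin{array}{ccc}\p_{k_1}&\frown&\p_{k_2}\\ \hline \p_{l_1}&\frown&\p_{l_2}\\ \hline\end{array}\right),$$
where the inner union also absorbs the degenerate (one-arc-against-itself) contributions, which correspond to parallel pairs $a,b$ with $a,b$ on the \emph{same} arc — but by Remark \ref{RemInflexionInScheme} and the definition of $\Phi_i$ such intra-arc parallel pairs cannot occur except at shared endpoints, so only the $\binom{\#\Phi_i}{2}$ genuine pairs of distinct arcs contribute. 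Summing, the total number of arcs (in the sense of Definition \ref{DefGlueingScheme}) feeding $\pi_{\CSS}$ is $\sum_i\binom{\#\Phi_i}{2}$.

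The main obstacle I anticipate is the bookkeeping at the boundary: making rigorous that the closure operation in Definition \ref{DefGlueingScheme} correctly glues the pieces so that no point of $\CSS(M)$ is missed and none is double-counted as a separate "arc", and that asymptote-producing parallel pairs (where $\kappa_M(a)+\kappa_M(b)=0$) are accounted for as limiting values of $\pi_{\CSS}$ on one of the $\binom{\#\Phi_i}{2}$ schemes rather than as extra data. One also has to confirm that a pair of arcs in $\Phi_i$ really does admit parallel pairs between them over the \emph{whole} shared angle interval — this is exactly Proposition \ref{PropDiffeoBetweenParallelArcs} — so that each unordered pair genuinely contributes one well-defined arc to the scheme and the map $\pi_{\CSS}$ (or its limit) is defined on it. Once these points are in place the counting is immediate and the proposition follows.
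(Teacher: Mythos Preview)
Your proposal is correct and follows the same route the paper takes: the paper simply states this proposition as an immediate consequence of the observation that $\CSS(\mathcal{A}_1\cup\mathcal{A}_2)=\pi_{\CSS}\left(\begin{array}{ccc}\p_{k_1}&\frown&\p_{k_2}\\ \hline \p_{l_1}&\frown&\p_{l_2}\\ \hline\end{array}\right)$ for any two arcs in the same $\Phi_i$, together with the combinatorial count of unordered pairs. You have supplied considerably more justification (the boundary bookkeeping, the exclusion of intra-arc parallel pairs via Remark~\ref{RemInflexionInScheme}, and the handling of asymptotes) than the paper itself, which omits these details entirely.
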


\begin{prop}[Proposition 3.15 in \cite{ZD_Wigner}]\label{PropAlgAlwaysGoFurhter}
Let $M$ be a generic regular closed non-convex curve. If a glueing scheme is of the form $\begin{array}{ccc} 
\p_{k_1} &\frown &\p_{k_2} \\ \hline
\p_{l_1} &\frown &\p_{l_2} \\ \hline \end{array}$, then this scheme can be prolonged in a unique way to $\begin{array}{ccccc} 
\p_{k_1} &\frown &\p_{k_2} &\frown &\p_{k_3}\\ \hline
\p_{l_1} &\frown &\p_{l_2} &\frown &\p_{l_3}\\ \hline \end{array}$ such that $(k_1,l_1)\ne (k_3,l_3)$.
\end{prop}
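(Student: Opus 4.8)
The plan is to locate the prolongation explicitly from the behaviour of the angle function $\varphi_M$ at the two division points $\p_{k_2}$, $\p_{l_2}$ where the scheme is to be extended. By the ordering built into Definition \ref{DefSetParallArcs}, if $\overarc{\p_{k_1}}{\p_{k_2}}$ and $\overarc{\p_{l_1}}{\p_{l_2}}$ both lie in $\Phi_i$, then $\varphi_M(s_{k_1})=\varphi_M(s_{l_1})=\varphi_i$ and $\varphi_M(s_{k_2})=\varphi_M(s_{l_2})=\varphi_{i+1}$, so $\p_{k_2},\p_{l_2}$ is a parallel pair and the prolongation is made at that end. First I would record that $\varphi_{i+1}$, being a local extremum of $\varphi_M$ (Proposition \ref{PropAlgAngleFun}(iii)), is attained as a critical value of $\varphi_M$ at exactly one inflexion point $p^{\star}$ — uniqueness because two inflexion points with the same tangent direction would violate condition (ii) of Theorem \ref{ThmGenericCSS} — and that, by the same condition, at most one of $\p_{k_2},\p_{l_2}$ can equal $p^{\star}$. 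The low-complexity case $\#\mathcal{S}_M=2$, where the decomposition is trivial, is disposed of separately at the outset; thus one may moreover assume $\p_{k_2}\neq\p_{l_2}$, the remaining exceptional configuration $\p_{k_2}=\p_{l_2}=p^{\star}$ being treated exactly like Case 2 below with the roles of $a$ and $b$ interchanged.

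Then I would split into two cases according to whether an inflexion point sits at the prolongation end. \textbf{Case 1: neither $\p_{k_2}$ nor $\p_{l_2}$ equals $p^{\star}$.} Then $\varphi_M$ is regular at $s_{k_2}$ and at $s_{l_2}$ (Proposition \ref{PropAlgAngleFun}(ii)) and passes through the value $\varphi_{i+1}$ monotonically there (Remark \ref{RemInflexionInScheme}); writing $\p_{k_3},\p_{l_3}$ for the division points neighbouring $\p_{k_2},\p_{l_2}$ on the sides away from $\p_{k_1},\p_{l_1}$, the function $\varphi_M$ leaves $\varphi_{i+1}$ into the band on the far side, so $\overarc{\p_{k_2}}{\p_{k_3}},\overarc{\p_{l_2}}{\p_{l_3}}\in\Phi_{i+1}$ and $\begin{array}{ccc}\p_{k_2}&\frown&\p_{k_3}\\ \hline \p_{l_2}&\frown&\p_{l_3}\\ \hline\end{array}$ is a legitimate glueing scheme. \textbf{Case 2: $\p_{k_2}=p^{\star}$} (the case $\p_{l_2}=p^{\star}$ being symmetric). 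Then $\varphi_M$ has a local extremum at $s_{k_2}$ and bounces back over band $i$; with $\p_{k_3}$ the division point neighbouring $\p_{k_2}$ on the far side from $\p_{k_1}$, one gets $\overarc{\p_{k_2}}{\p_{k_3}}\in\Phi_i$ with $\varphi_M(s_{k_3})=\varphi_i$, while the point $b$ parallel to $a\in\overarc{\p_{k_2}}{\p_{k_3}}$ has $\varphi_M(b)$ decreasing from $\varphi_{i+1}$, which forces $b$ back along $\overarc{\p_{l_1}}{\p_{l_2}}$, hence $\p_{l_3}=\p_{l_1}$; now $\begin{array}{ccc}\p_{k_2}&\frown&\p_{k_3}\\ \hline \p_{l_2}&\frown&\p_{l_1}\\ \hline\end{array}$ is again a glueing scheme, both of whose arcs lie in $\Phi_i$. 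In either case Proposition \ref{PropDiffeoBetweenParallelArcs} identifies the new piece with the graph of a diffeomorphism, so it is a smooth arc whose only common point with $\begin{array}{ccc}\p_{k_1}&\frown&\p_{k_2}\\ \hline \p_{l_1}&\frown&\p_{l_2}\\ \hline\end{array}$ is $(\p_{k_2},\p_{l_2})$; hence $\begin{array}{ccccc}\p_{k_1}&\frown&\p_{k_2}&\frown&\p_{k_3}\\ \hline \p_{l_1}&\frown&\p_{l_2}&\frown&\p_{l_3}\\ \hline\end{array}$ is a prolongation, and since $\p_{k_3}\neq\p_{k_1}$ in both cases it satisfies $(k_1,l_1)\neq(k_3,l_3)$.

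For uniqueness I would observe that the only freedom in extending the scheme past $(\p_{k_2},\p_{l_2})$ is the direction in which $a$ leaves $\p_{k_2}$: moving $a$ back into $\overarc{\p_{k_1}}{\p_{k_2}}$ merely reproduces the original piece and yields $(k_3,l_3)=(k_1,l_1)$, whereas the other direction fixes $\overarc{\p_{k_2}}{\p_{k_3}}$ uniquely, after which $\overarc{\p_{l_2}}{\p_{l_3}}$ and the pairing are forced by the requirement that the new piece consist of parallel pairs, uniqueness again coming from Proposition \ref{PropDiffeoBetweenParallelArcs}. Equivalently one checks that the set of parallel pairs of $M$ is a smooth $1$-manifold near $(\p_{k_2},\p_{l_2})$ — this is where condition (ii) of Theorem \ref{ThmGenericCSS} is used, to ensure that at most one coordinate of that pair is an inflexion point — and the prolongation is simply its continuation. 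I expect the bulk of the work to be the bookkeeping in Case 2: the non-obvious point is that, when the prolongation end carries an inflexion point, the extension does \emph{not} consist of two fresh arcs of $M$ but reuses $\overarc{\p_{l_1}}{\p_{l_2}}$ traversed backwards, and one must verify in every case that the two continuation arcs land in a single common set of parallel arcs, so that Definition \ref{DefGlueingScheme} actually applies.
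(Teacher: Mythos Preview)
The paper does not supply a proof of this proposition; it is imported directly as Proposition~3.15 of \cite{ZD_Wigner}. Your argument is correct and is the natural one dictated by the decomposition machinery of Section~\ref{decomp}: the case split on whether the prolongation end $(\p_{k_2},\p_{l_2})$ meets the unique inflexion point $p^\star$ over the extremal value $\varphi_{i+1}$, together with the key observation in Case~2 that the non-inflexion arc is \emph{re-used in reverse} (so that both continuation arcs lie in the same $\Phi_i$), is precisely the mechanism by which the original argument in \cite{ZD_Wigner} proceeds. Your closing remark---that the set of parallel pairs is a smooth embedded $1$-manifold in $M\times M$ away from the diagonal, by the implicit function theorem applied to $\det(f'(s),f'(t))=0$ under genericity condition~(ii)---is the clean conceptual reformulation of the same uniqueness statement and would serve equally well as the entire proof.
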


\begin{remark}
To avoid repetition in the union $\displaystyle\bigcup_{i=1}^{n-1}\begin{array}{ccc} 
\p_{k_i} &\frown &\p_{k_{i+1}} \\ \hline
\p_{l_i} &\frown &\p_{l_{i+1}} \\ \hline \end{array}$ in Definition \ref{DefGlueingScheme} we assume that no pair $\begin{array}{c} \p_k \\ \hline \p_l \\ \hline \end{array}$ except the beginning and the end can appear twice in the glueing scheme. 
Furthermore, if the pair $\begin{array}{c}\p_k \\ \hline \p_l \\ \hline \end{array}$ appears in the glueing scheme, then the pair $\begin{array}{c} \p_l \\ \hline \p_k \\ \hline \end{array}$ does not appear unless they are the beginning and the end of the scheme.
\end{remark}

The image of a glueing scheme under the $\CSS\text{-}$point map $\pi_{\CSS}$ represents a part of a branch of the Centre Symmetry Set. If we endow the set of all possible glueing schemes with the inclusion relation, then this set is partially ordered.

The uniqueness of prolongation of the glueing set (see Proposition \ref{PropAlgAlwaysGoFurhter}) leads to the following proposition.

\begin{prop}\label{PropChains}
The set of all glueing schemes endowed with the inclusion relation is the disjoint union of totally ordered sets.
\end{prop}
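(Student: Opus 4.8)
The plan is to show that in the poset of all glueing schemes, every element has at most one immediate successor and at most one immediate predecessor; once this is established, the Hasse diagram of the poset is a disjoint union of (finite) paths, which is exactly the assertion that the poset decomposes as a disjoint union of totally ordered sets. The key input is the uniqueness of prolongation supplied by Proposition \ref{PropAlgAlwaysGoFurhter}, together with the bookkeeping remarks restricting repetition of pairs $\begin{array}{c}\p_k\\\hline\p_l\\\hline\end{array}$ in a scheme.

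First I would make precise the order structure: a glueing scheme $\mathcal{C}$ is a finite sequence of consecutive ``columns'' $\begin{array}{c}\p_{k_i}\\\hline\p_{l_i}\\\hline\end{array}$ with each adjacent pair lying in a common set of parallel arcs and with the non-repetition conventions of the two Remarks following Definition \ref{DefGlueingScheme}; the inclusion relation is simply ``$\mathcal{C}_1\le\mathcal{C}_2$ iff $\mathcal{C}_1$ is (the image of) a sub-glueing-scheme of $\mathcal{C}_2$'', i.e. a contiguous subsequence of columns. Two schemes are comparable precisely when one is a contiguous sub-block of the other. Next I would verify that $\le$ is genuinely a partial order on the set of glueing schemes — reflexivity and transitivity are immediate, and antisymmetry follows because a proper contiguous sub-block has strictly fewer columns.

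The heart of the argument is the local structure at a fixed scheme $\mathcal{C}$. An immediate successor of $\mathcal{C}$ is obtained by appending one new column either at the end or at the beginning of $\mathcal{C}$. Proposition \ref{PropAlgAlwaysGoFurhter} says exactly that a one-step prolongation at a given end exists and is unique subject to not back-tracking to the previous column; running the same proposition with the orientation of $M$ reversed (equivalently, reading the scheme backwards) gives uniqueness of the one-step prolongation at the other end. Hence $\mathcal{C}$ has at most two immediate successors among schemes containing it, and — reversing the roles of ``sub'' and ``super'' — at most two immediate predecessors (namely the two schemes obtained by deleting the first column or the last column). Combining: in the Hasse diagram of $(\{\text{glueing schemes}\},\le)$ every vertex has degree at most two. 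A graph all of whose vertices have degree $\le 2$ is a disjoint union of paths and cycles; the non-repetition conventions (no column, hence no finite pattern of columns, may recur except as matched endpoints) force every chain to be finite and forbid cycles, since a cycle would require a scheme that is simultaneously a proper sub-block and a proper super-block of another, contradicting antisymmetry, or else a column recurring in the interior of a scheme. Therefore each connected component of the Hasse diagram is a finite path, i.e. a finite totally ordered set, and the whole poset is their disjoint union.

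The step I expect to be the main obstacle is the careful translation of ``adding a column at the front'' into an instance of Proposition \ref{PropAlgAlwaysGoFurhter}: that proposition is stated for prolongation ``to the right'', so I must check that reversing the parameterization of $M$ (which swaps the two endpoints of every parallel arc and the two rows of every column) genuinely carries a glueing scheme to a glueing scheme and is an involution on the whole poset, so that front-prolongation is literally rear-prolongation of the reversed scheme. I also need to confirm that the two non-repetition Remarks are strong enough to exclude a scheme being cyclically prolongable forever; this is where generic-finiteness of $\#\mathcal{S}_M=2m$ (Corollary \ref{PropMisEven}) enters, bounding the number of distinct columns and hence the length of any chain.
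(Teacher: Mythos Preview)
The paper offers no detailed proof here; it merely states, in the sentence preceding the proposition, that the result follows from the uniqueness of prolongation (Proposition \ref{PropAlgAlwaysGoFurhter}). Your write-up is far more detailed than the paper's, but it has a real gap.

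The slip is in the sentence ``Combining: in the Hasse diagram \ldots\ every vertex has degree at most two.'' You have just argued that a scheme $\mathcal{C}$ has at most \emph{two} immediate successors (prolong at the right end or at the left end) and at most \emph{two} immediate predecessors (delete the rightmost or the leftmost column); that gives Hasse-degree at most four, not two. This is not a harmless miscount. If $\mathcal{C}$ genuinely admits two immediate successors $\mathcal{C}_L$ and $\mathcal{C}_R$, obtained by prolonging at opposite ends, then $\mathcal{C}_L$ and $\mathcal{C}_R$ are incomparable (neither is a contiguous sub-block of the other), and together with $\mathcal{C}$ and their common two-sided prolongation they form a diamond in the poset. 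Under your reading of ``inclusion'' as ``contiguous sub-block'', the comparability classes are therefore not chains. Your own opening plan promised at most \emph{one} immediate successor and predecessor; the execution then (correctly) finds two of each but never revisits the plan.

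What unique prolongation \emph{does} give, and what the paper actually uses downstream, is that every glueing scheme sits in a \emph{unique} maximal glueing scheme: prolong at both ends until the non-repetition rules stop you, each step being forced by Proposition \ref{PropAlgAlwaysGoFurhter}. If you want to salvage the literal statement, you must read the order more narrowly --- e.g.\ ``$\mathcal{C}_1\le\mathcal{C}_2$ iff $\mathcal{C}_2$ is obtained from $\mathcal{C}_1$ by iterated right-prolongation via Proposition \ref{PropAlgAlwaysGoFurhter}'' --- under which your original plan (one immediate successor, one immediate predecessor) is valid and the path argument goes through.
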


There is only finite number of arcs from which we can construct branches of $\CSS(M)$. Therefore, we can define a maximal glueing scheme.

\begin{definition}
A \textit{maximal glueing scheme} is a glueing scheme which is a maximal element of the set of all glueing schemes endowed with the inclusion relation.
\end{definition}

Notice that starting from Proposition \ref{PropAlgAlwaysGoFurhter} we considered only non-convex curves. In the following remark we will examine the case of convex curves.

\begin{remark}\label{RemGlueSchemeOval}
If $M$ is a generic regular convex curve, then the set of parallel arcs is equal to $\Phi_0=\{\overarc{\p_0}{\p_1}, \overarc{\p_1}{\p_0}\}$. As a result, the only maximal glueing scheme is 
$\begin{array}{ccc}
\p_0&\frown& \p_1\\ \hline
\p_1&\frown& \p_0\\ \hline\end{array}.$
\end{remark}

\begin{lemma}[Lemma 3.20 in \cite{ZD_Wigner}]\label{LemPropMaxGlueSchemes}
Let $f:S^1\mapsto\mathbb{R}^2$ be the arc length parameterization of $M$. Then
\begin{enumerate}[(i)]
\item for every two different arcs $\overarc{\p_{k_1}}{\p_{k_2}}$, $\overarc{\p_{l_1}}{\p_{l_2}}$ in $\Phi_i$ there exists exactly one maximal glueing scheme containing 
$\begin{array}{ccc}
\p_{k_1} &\frown &\p_{k_2} \\ \hline
\p_{l_1} &\frown &\p_{l_2} \\ \hline \end{array},$
$\begin{array}{ccc}
\p_{k_2} &\frown &\p_{k_1} \\ \hline
\p_{l_2} &\frown &\p_{l_1} \\ \hline \end{array},$
\linebreak 
$\begin{array}{ccc}
\p_{l_1} &\frown &\p_{l_2} \\ \hline
\p_{k_1} &\frown &\p_{k_2} \\ \hline \end{array},$
or
$\begin{array}{ccc} 
\p_{l_2} &\frown &\p_{l_1} \\ \hline
\p_{k_2} &\frown &\p_{k_1} \\ \hline \end{array}$.

\item every maximal glueing scheme is in the following form $\begin{array}{ccccc}
\p_k &\frown&\ldots&\frown&\p_{k'} \\ \hline
\p_l &\frown&\ldots&\frown&\p_{l'} \\ \hline \end{array}$, where $\{p_k, p_l\}=\{p_{k'},p_{l'}\}$ whenever $p_k\neq p_l$ and $p_{k'}\neq p_{l'}$.

\item if $\p_k$ is an inflexion point of $M$, then there exists a maximal glueing scheme which is in the form 
\begin{align*}
\begin{array}{ccccccccc}
\p_k &\frown&\p_{k_1}&\frown&\ldots&\frown&\p_{k_n}&\frown&\p_l \\ \hline
\p_k &\frown&\p_{l_1}&\frown&\ldots&\frown&\p_{l_n}&\frown&\p_l \\ \hline \end{array},
\end{align*}
where $\p_l$ is a different inflexion point of $M$ and $p_{k_i}\neq p_{l_i}$ for $i=1, 2, \ldots, n$.
\end{enumerate}
\end{lemma}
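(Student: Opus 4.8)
The plan is to treat the three statements in order, exploiting the unique-prolongation result of Proposition \ref{PropAlgAlwaysGoFurhter} together with the combinatorial structure of the sequence of division points and the sets of parallel arcs. For part (i), I would start from the observation that the four displayed schemes are exactly the four ways of writing the "seed" consisting of the two arcs $\overarc{\p_{k_1}}{\p_{k_2}}$ and $\overarc{\p_{l_1}}{\p_{l_2}}$ as a glueing scheme of length one: swapping the two rows corresponds to interchanging the roles of the arcs, and reversing the order within both rows corresponds to traversing the arcs in the opposite direction. By Proposition \ref{PropChains} the set of all glueing schemes containing any one of these four seeds lies in a single totally ordered chain, and by Proposition \ref{PropAlgAlwaysGoFurhter} each scheme in that chain has a unique prolongation at either end; hence the chain has a unique maximal element. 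The only thing to check carefully is that the four seeds all lie in the \emph{same} chain — this follows because the image under $\pi_{\CSS}$ is the same arc traversed possibly in reverse, so any maximal scheme through one of them contains all four.

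For part (ii), the content is that a maximal scheme "returns to its starting pair". Since the scheme cannot be prolonged, by Proposition \ref{PropAlgAlwaysGoFurhter} the only obstruction to prolongation is that a further prolongation would force a repeated pair; combined with the Remark following Proposition \ref{PropAlgAlwaysGoFurhter} (no pair appears twice except possibly as the beginning and the end), the scheme must close up, i.e. the final pair $\{\p_{k'},\p_{l'}\}$ must coincide with the initial pair $\{\p_k,\p_l\}$ whenever these are honest pairs of distinct points. The degenerate cases where $\p_k=\p_l$ or $\p_{k'}=\p_{l'}$ are precisely the endpoints lying over inflexion points (or points parallel to inflexion points), where a set of parallel arcs can "begin" or "end"; these are handled by Remark \ref{RemInflexionInScheme} and Proposition \ref{PropParalllel-infexion}, and they are exactly the situation of part (iii).

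For part (iii), if $\p_k$ is an inflexion point then by Remark \ref{RemInflexionInScheme} and the definition of the sequence of division points, $\p_k$ is a division point at which two consecutive sets of parallel arcs meet, and the arc of each such set that is incident to $\p_k$ has $\p_k$ as one endpoint with the \emph{same} point on both rows (the parallel partner of an inflexion point approaches the inflexion point itself). So one can start the maximal scheme with the column $\begin{array}{c}\p_k\\ \hline \p_k\\ \hline\end{array}$; by part (ii) the maximal scheme through it closes up at a column $\begin{array}{c}\p_l\\ \hline \p_l\\ \hline\end{array}$, and such a column can only occur over an inflexion point (again by Remark \ref{RemInflexionInScheme} and Proposition \ref{PropAlgAlwaysGoFurhter}), giving the asserted form with $\p_{k_i}\neq\p_{l_i}$ on every intermediate column. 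I expect the main obstacle to be the bookkeeping in part (ii): one must rule out a maximal scheme that "spirals" and terminates at a pair unrelated to its start, and the cleanest way to do this is to track the value of the angle function $\varphi_M$ along the two rows — both rows realize the same monotone excursion of $\varphi_M$ between consecutive local extrema, so the scheme is forced, step by step, through a determined sequence of sets of parallel arcs $\Phi_i,\Phi_{i\pm1},\dots$, and finiteness of $\mathcal{M}(\varphi_M)$ (Lemma \ref{LemmaAlgEvenNumOfInfl}, Proposition \ref{prop:finite_rot_num}) forces it to return.
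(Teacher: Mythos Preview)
The paper does not prove this lemma at all: it is quoted verbatim as Lemma 3.20 of \cite{ZD_Wigner} and used as a black box, so there is no ``paper's own proof'' to compare against. Your sketch is therefore being judged on its own merits, and in outline it is the natural argument and essentially what one finds in \cite{ZD_Wigner}: unique prolongation (Proposition \ref{PropAlgAlwaysGoFurhter}) plus finiteness of the arc decomposition forces every maximal scheme to close up, and the degenerate closing columns are exactly the inflexion points.

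Two places deserve tightening. In (i) your claim that ``the four seeds all lie in the same chain'' is not literally correct: row-swap and order-reversal produce four \emph{distinct} length-one schemes, sitting in four distinct chains in the poset of Proposition \ref{PropChains}. What is true is that the four resulting maximal schemes are the images of one another under these two commuting involutions, and hence determine the same smooth branch; that is precisely the content of the ``or'' in the statement. You should phrase it that way rather than asserting a common chain. In (iii) you invoke (ii) to conclude that a scheme starting at the column $\begin{smallmatrix}\p_k\\ \p_k\end{smallmatrix}$ must end at a column $\begin{smallmatrix}\p_l\\ \p_l\end{smallmatrix}$, but (ii) as stated only constrains the endpoints under the hypothesis $\p_k\neq\p_l$ and $\p_{k'}\neq\p_{l'}$; it says nothing when one end is already degenerate. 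The correct argument is direct: at an inflexion point $\p_k$ the two arcs of $\Phi_i$ incident to $\p_k$ coincide at that endpoint, so the scheme cannot be prolonged past the column $\begin{smallmatrix}\p_k\\ \p_k\end{smallmatrix}$ on that side; prolonging on the other side, maximality is reached only when the same phenomenon recurs, which by Remark \ref{RemInflexionInScheme} and Proposition \ref{PropAlgAngleFun}(iii) happens exactly at another inflexion point $\p_l$. With these two fixes your argument is complete.
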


By Proposition \ref{PropNumDiffArcs} and Lemma \ref{LemPropMaxGlueSchemes} we obtain the following theorem.

\begin{thm}\label{ThmGlueSchemeIsBranch} 
The image of every maximal glueing scheme of $M$ under the $\CSS\text{-}$point map $\pi_{\CSS}$ is a smooth branch of the Centre Symmetry Set of $M$ and all smooth branches of the Centre Symmetry Set can be obtained this way.
\end{thm}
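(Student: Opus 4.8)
The plan is to show the two directions of the assertion separately: first, that the image $\pi_{\CSS}$ of every maximal glueing scheme is genuinely a \emph{single} smooth branch (in the sense of Section \ref{decomp}, i.e.\ a smooth semi-branch after identification along asymptotes), and second, that no smooth branch is left out by this construction. Both directions should be assembled from the structural results already available, namely Proposition \ref{PropNumDiffArcs}, Lemma \ref{LemPropMaxGlueSchemes}, and Theorem \ref{ThmGenericCSS}(a).

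For the first direction, fix a maximal glueing scheme $\mathcal{C}=\begin{array}{ccccc}\p_k&\frown&\ldots&\frown&\p_{k'}\\ \hline \p_l&\frown&\ldots&\frown&\p_{l'}\\ \hline\end{array}$. By Definition \ref{DefGlueingScheme} together with Proposition \ref{PropDiffeoBetweenParallelArcs}, each constituent arc $\begin{array}{ccc}\p_{k_i}&\frown&\p_{k_{i+1}}\\ \hline \p_{l_i}&\frown&\p_{l_{i+1}}\\ \hline\end{array}$ is the graph of the parallel-pair diffeomorphism $P$ restricted to a subarc, hence is itself a one-dimensional manifold diffeomorphic to an interval, and along each such arc $\pi_{\CSS}$ (extended through its asymptote values by Remark \ref{rem:asymptote}) is exactly the natural parameterization $\gamma_{\CSS}$ of \eqref{param_l_ostateczna}. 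By Theorem \ref{ThmGenericCSS}(a) its image is a smooth curve with at most cusp singularities. I would then check that consecutive arcs in $\mathcal{C}$ glue $C^\infty$-compatibly: at the shared division point $\begin{array}{c}\p_{k_{i+1}}\\ \hline \p_{l_{i+1}}\\ \hline\end{array}$ both pieces pass through the same point of $M\times M$ and, by Remark \ref{RemInflexionInScheme}, no inflexion point of $M$ lies in the interior of any constituent arc, so $\kappa_f+\kappa_g\neq0$ there and $\gamma_{\CSS}$ is real-analytic/smooth across the junction — the two pieces are restrictions of one smooth arc in $M\times M$. At an asymptote value ($\kappa_f+\kappa_g=0$) the identification built into Remark \ref{rem:asymptote} and the definition of smooth branch is precisely what turns the two semi-branches sharing that asymptote into one branch, so maximality of $\mathcal{C}$ (which by Proposition \ref{PropAlgAlwaysGoFurhter} can only terminate at an inflexion point of $M$, by Lemma \ref{LemPropMaxGlueSchemes}(iii)) means the image cannot be prolonged as a smooth branch: its endpoints sit at ordinary inflexion points of $M$, which are exactly the beginnings of $\CSS$ components.

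For the converse, I would argue by a counting/covering argument. By Proposition \ref{PropNumDiffArcs}, $\CSS(M)$ is the $\pi_{\CSS}$-image of the union of all $\sum_i\binom{\#\Phi_i}{2}$ arcs $\begin{array}{ccc}\p_{k_1}&\frown&\p_{k_2}\\ \hline \p_{l_1}&\frown&\p_{l_2}\\ \hline\end{array}$ arising from unordered pairs of arcs in a common $\Phi_i$. By Lemma \ref{LemPropMaxGlueSchemes}(i), \emph{every} such two-arc scheme lies in exactly one maximal glueing scheme (up to the four listed symmetries, which all produce the same subset of $M\times M$). Hence the union of all maximal glueing schemes exhausts the union of all the arcs in Proposition \ref{PropNumDiffArcs}, and therefore their $\pi_{\CSS}$-images cover all of $\CSS(M)$ (minus the double tangents, which by the local-parameterization discussion are the part not produced by $\gamma_{\CSS}$ at all). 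Since each maximal scheme maps onto a smooth branch by the first part, and by Proposition \ref{PropChains} distinct maximal schemes are the distinct maximal elements of a disjoint union of chains, every smooth branch is the image of some maximal glueing scheme.

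The main obstacle I anticipate is the smooth-gluing verification in the first part: making precise that the constituent arcs of a maximal scheme fit together into one genuinely smooth curve rather than merely a connected union of smooth pieces meeting at division points. This requires showing that a division point $\p_{k_{i+1}}$ interior to the scheme is never an inflexion point of $M$ (so $\gamma_{\CSS}$ is locally smooth there, by Remark \ref{RemInflexionInScheme} and Lemma \ref{CuspsofCSS}), and separately handling the asymptote case, where "smoothness" must be interpreted via the identification of semi-branches and where genericity (Theorem \ref{ThmGenericCSS}(d)--(e), no double asymptotes, curvature changing sign) guarantees the two semi-branches glue to a single branch with the claimed regularity. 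A secondary bookkeeping point is to confirm that the four symmetric forms of a two-arc scheme in Lemma \ref{LemPropMaxGlueSchemes}(i) indeed yield the \emph{same} image under $\pi_{\CSS}$, which is immediate from the symmetry of the formula \eqref{param_l_ostateczna} in $(f,\kappa_f)\leftrightarrow(g,\kappa_g)$.
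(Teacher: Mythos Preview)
Your approach is the paper's: the paper offers no proof beyond the single sentence ``By Proposition \ref{PropNumDiffArcs} and Lemma \ref{LemPropMaxGlueSchemes} we obtain the following theorem,'' and you have supplied exactly the two-direction argument that this sentence leaves implicit, invoking the same two results as the structural core.

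One correction to your anticipated obstacle: an interior division point $\p_{k_{i+1}}$ of a maximal glueing scheme \emph{can} be an inflexion point of $M$ --- by construction every division point is either an inflexion point or parallel to one --- so the claim you say ``requires showing'' is false. What actually guarantees smooth gluing at such a junction is genericity condition (ii) of Theorem \ref{ThmGenericCSS}: at most one member of the pair $\p_{k_{i+1}},\p_{l_{i+1}}$ is an inflexion point, so their curvatures do not both vanish, $\kappa_M(\p_{k_{i+1}})+\kappa_M(\p_{l_{i+1}})\neq 0$ (away from asymptotes), and the local parameterization \eqref{param_l_ostateczna} --- taken from the non-inflexion side so that the implicit function $t(s)$ exists --- extends smoothly across the junction. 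With this adjustment your argument goes through.
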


\section{Global Geometry of the Centre Symmetry Set}

In this section we study global properties of the Centre Symmetry Set which follow from maximal glueing schemes introduced in Section \ref{decomp} and methods similar to the ones presented in \cite{Romero, D-Z3, ZD_Wigner}.

Theorem \ref{ThmGlueSchemeIsBranch} along with Lemma \ref{LemPropMaxGlueSchemes} yield the following fact.

\begin{prop}\label{PropAlgPartBetweenIflPt}
Let $M$ be a generic regular closed curve. If $M$ has $2n$ inflexion points then there exist exactly $n$ smooth branches of  $\CSS(M)$ connecting pairs of inflexion points on $M$ and every inflexion point of $M$ is the end of exactly one branch of $\CSS(M)$. Other branches of $\CSS(M)$ are closed curves.
\end{prop}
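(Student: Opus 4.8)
The plan is to read off the statement directly from the combinatorics of maximal glueing schemes, using Theorem \ref{ThmGlueSchemeIsBranch} to identify smooth branches of $\CSS(M)$ with images of maximal glueing schemes and Lemma \ref{LemPropMaxGlueSchemes} to classify their shapes. By Proposition \ref{PropAlgAngleFun}(iv) (equivalently Lemma \ref{LemmaAlgEvenNumOfInfl}) the curve $M$ has an even number $2n$ of inflexion points; call them $\p_{i_1},\dots,\p_{i_{2n}}$, and note each appears in the sequence of division points $\mathcal{S}_M$. First I would observe, via Remark \ref{RemInflexionInScheme} and the definition of the sets of parallel arcs, that an inflexion point $\p_k$ occurs in a glueing scheme only as an endpoint of a constituent arc, and moreover (by Proposition \ref{PropAlgAngleFun}(iii)-(iv), since $\varphi_M(s_k)$ is a local extremum of the angle function) the arc of $M$ immediately on either side of $\p_k$ lies in one and the same set of parallel arcs together with the arc on either side of the adjacent inflexion point; consequently at an inflexion point a glueing scheme cannot be prolonged further, so inflexion points are exactly the ends of non-closed maximal glueing schemes.

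Next I would invoke Lemma \ref{LemPropMaxGlueSchemes}(iii): if $\p_k$ is an inflexion point then there is a maximal glueing scheme of the form
\begin{align*}
\begin{array}{ccccccccc}
\p_k &\frown&\p_{k_1}&\frown&\ldots&\frown&\p_{k_n}&\frown&\p_l \\ \hline
\p_k &\frown&\p_{l_1}&\frown&\ldots&\frown&\p_{l_n}&\frown&\p_l \\ \hline \end{array},
\end{align*}
with $\p_l$ another inflexion point; by Lemma \ref{LemPropMaxGlueSchemes}(ii) every maximal glueing scheme whose first (or last) column is "diagonal" (i.e. has both entries equal) must have the other end diagonal as well, and by the classification such a diagonal end entry is forced to be an inflexion point (a division point whose angle value is a local extremum cannot be matched with a distinct parallel partner at that very value, so the only parallel pair collapsing there is the diagonal one). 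Hence the maximal glueing schemes split into two types: those with two diagonal ends, each end being an inflexion point, whose $\pi_{\CSS}$-image is a branch joining two inflexion points; and those with no diagonal end, which by Lemma \ref{LemPropMaxGlueSchemes}(ii) close up (the first and last pair coincide) and whose $\pi_{\CSS}$-image is therefore a closed curve. Since each inflexion point is a diagonal end of exactly one maximal glueing scheme — uniqueness following from Proposition \ref{PropChains} (the glueing schemes form disjoint totally ordered chains, so each arc, in particular each arc incident to an inflexion point, lies in a unique maximal chain) together with the uniqueness of prolongation in Proposition \ref{PropAlgAlwaysGoFurhter} — the $2n$ inflexion points are paired up into exactly $n$ such branches, and every inflexion point is the end of exactly one branch. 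Theorem \ref{ThmGlueSchemeIsBranch} then guarantees these are all the smooth branches, so the remaining ones are the closed-curve branches, which is the final assertion.

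I expect the main obstacle to be the rigorous verification that a diagonal end of a maximal glueing scheme is necessarily an inflexion point (and conversely that every inflexion point forces a diagonal end) — that is, translating the local-extremum condition on $\varphi_M$ at a division point into the statement that the only parallel pair degenerating there is $(\p_k,\p_k)$, so that no prolongation past $\p_k$ is possible while a prolongation "through the diagonal" terminates there. This is essentially a careful unwinding of Definition \ref{DefSetParallArcs} together with Proposition \ref{PropAlgAngleFun}, Remark \ref{RemInflexionInScheme}, and the convex case handled separately in Remark \ref{RemGlueSchemeOval} (where $2n=0$ and indeed all branches — here the single one — are closed). The counting of $n$ branches from $2n$ inflexion points, and the "exactly one branch per inflexion point" clause, are then immediate from the pairing, and that the other maximal glueing schemes yield closed curves is immediate from Lemma \ref{LemPropMaxGlueSchemes}(ii).
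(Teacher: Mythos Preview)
Your proposal is correct and follows essentially the same route as the paper: both arguments identify smooth branches with maximal glueing schemes via Theorem \ref{ThmGlueSchemeIsBranch}, use Lemma \ref{LemPropMaxGlueSchemes}(iii) to produce the inflexion-to-inflexion branches, invoke uniqueness of prolongation (Proposition \ref{PropAlgAlwaysGoFurhter}) to count exactly $n$ of them, and read off from Lemma \ref{LemPropMaxGlueSchemes}(ii) that the remaining branches are closed. Your version is more explicit about why diagonal ends must be inflexion points, which the paper leaves implicit in the cited lemma.
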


\begin{proof}
If $M$ has $2n$ inflexion points, then the maximal glueing scheme presented in Section \ref{decomp} produces maximal glueing schemes that begin and end with inflexion points (see Lemma \ref{LemPropMaxGlueSchemes}).
The uniqueness of prolongation of the maximal glueing scheme (see Proposition \ref{PropAlgAlwaysGoFurhter}) provides that there are exactly $n$ of such maximal glueing schemes.
Then, Theorem \ref{ThmGlueSchemeIsBranch} allows us to obtain exactly $n$ smooth branches of the Centre Symmetry Set that begin and end with inflexion points.

Directly from Lemma \ref{LemPropMaxGlueSchemes} (ii) we obtain that the other maximal glueing schemes begin and end with the same parallel pair, which means that the corresponding smooth branches of $\CSS(M)$ are closed.
\end{proof}

Now we will consider the secant caustic and its relationship with $\CSS(M).$

\begin{definition}
The \textit{secant caustic} of $M$ is the following set:
\begin{align*}
\SC(M)=\textrm{cl}\left\{a-b\ \big|\ a,b \text{ is a parallel pair of } M\right\}.
\end{align*}
\end{definition}

\noindent The geometry of the secant caustic and its properties are further analysed in \cite{Romero}. We will now focus on properties connected with the Centre Symmetry Set.

\begin{remark}[Remark 2.20 in \cite{Romero}]\label{SCvsCSS}
A parallel pair $a, b$ of a curve $M$ produces an asymptote of the Centre Symmetry Set $\CSS(M)$ if and only if it gives rise to a singular point of the secant caustic.
\end{remark}

\begin{figure}[h]
\centering
\includegraphics[scale=0.30]{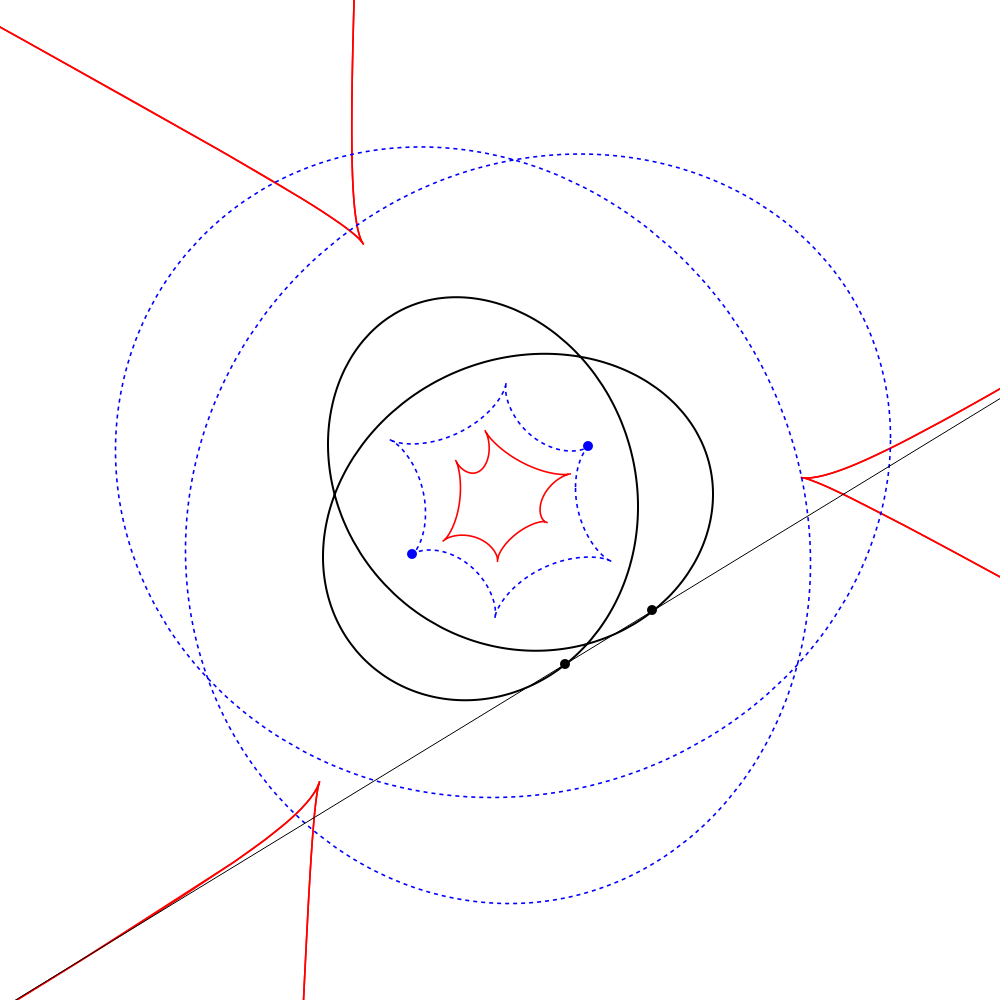}
\caption{A $2$-rosette $M$, $\CSS(M),$ and $\SC(M)$ (the dashed line)}
\label{FigCssSc}
\end{figure}

\begin{definition}\label{rosette}
An $n$-\textit{rosette} is a closed regular curve with non-vanishing curvature with the rotation number equal to $n.$
\end{definition}

In Figure \ref{FigCssSc} we present a $2$-rosette $M$ with a parameterization
\begin{align*}
    g(t)=\left(p(t)\cos t-p'(t)\sin t,p(t)\sin t+p'(t)\cos t\right),
\end{align*}
where $p(t)=14+3\cos\frac{3t}{2}+\frac{1}{5}\sin\frac{5t}{2}$ is so-called support function of a rosette (for further details on support functions see e.g. \cite{Zwierz3}). We illustrate a parallel pair $g(t_0),g(t_0+2\pi),$ where $t_0\approx 5.38207$ that produces an asymptote of the Centre Symmetry Set of $M.$ Moreover two cusps of the secant caustic of $M$ are marked. Although it may seem that the pair $g(t_0),g(t_0+2\pi)$ creates a double tangent -- it does not. The double tangent is created by the tangent line to $g(t_1)$ and $g(t_1+2\pi),$ where $t_1\approx 5.26053$ (it is not marked on the diagram). In a generic situation (see Theorem \ref{ThmGenericCSS}) a double tangent line of $M$ cannot be an asymptote of the Centre Symmetry Set of $M.$

In what follows, we will denote the translation by a vector $v\in\mathbb{R}^2$ as $\tau_{v}.$

\begin{definition}\label{DefCurved}
Let $a,$ $b$ be a parallel pair of $M$ and assume that $a$ and $b$ are not inflexion points. We say that $M$ is \textit{curved in the same side at $a$ and $b$} (respectively \textit{curved in the different sides at $a$ and $b$}) if the germs of the curves $M$ and $\tau_{a-b}(M)$ at $a=\tau_{a-b}(b)$ are on the same side (respectively on the different sides) of the tangent line to $M$ at $a.$
\end{definition}

\noindent We illustrate Definition \ref{DefCurved} in Figure \ref{FigCurved}.

\begin{figure}[h]
\centering
\includegraphics[scale=0.47]{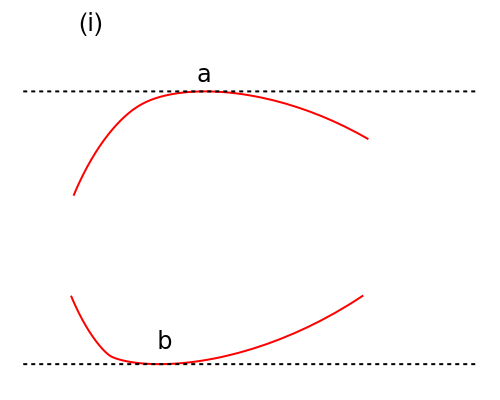}
\includegraphics[scale=0.47]{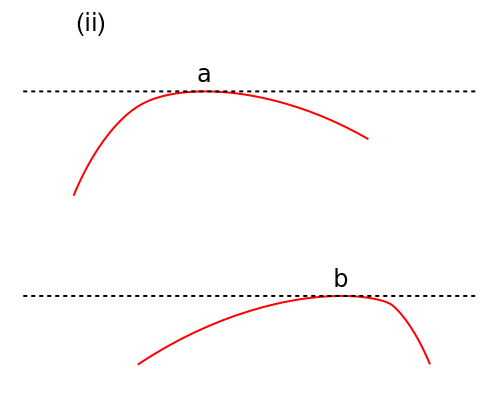}
\caption{(i) A curve curved in the different sides at a parallel pair $a,$ $b,$ (ii) a curve curved in the same side at a parallel pair $a,$ $b$}
\label{FigCurved}
\end{figure}

\begin{lemma}\label{CurvedVsAsymptotes}
Let $a,$ $b$ be a parallel pair of $M$ and assume that $a$ and $b$ are not inflexion points.
If $M$ is curved in the different sides at $a$ and $b,$ then $a$ and $b$ cannot produce an asymptote of $\CSS(M).$
\end{lemma}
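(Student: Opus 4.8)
The plan is to characterize the "curved in the same/different sides" condition in terms of the sign of the curvatures at the parallel pair, and then to recall from Remark~\ref{rem:asymptote} that an asymptote occurs precisely when $\kappa_M(a)+\kappa_M(b)=0$ in the sign convention adapted to a \emph{standard} pair (opposite directions). First I would set up local arc-length parameterizations: let $f(s)$ parameterize $M$ near $a=f(0)$ and $g(t)$ parameterize $M$ near $b=g(0)$, chosen so that $f(0),g(0)$ is the given parallel pair. The tangent lines are parallel, so $f'(0)=\pm g'(0)$. I would translate $g$ by $a-b$ so that $\tau_{a-b}(b)=a$; then both germs $M$ and $\tau_{a-b}(M)$ pass through $a$ with the same tangent line, and their relative position is governed by the second-order terms, i.e.\ by the signed curvatures measured against a common choice of normal at $a$.

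The key computation is to express "same side vs.\ different sides" in terms of $\kappa_f(0)$ and $\kappa_g(0)$. Fix the unit normal $\mathbbm{n}$ at $a$ obtained by rotating $f'(0)$ by $+\pi/2$. Writing the graphs of $M$ and $\tau_{a-b}(M)$ over the tangent line at $a$, the leading coefficients are $\tfrac12\kappa_M(a)$ and, for $\tau_{a-b}(M)$, a multiple of $\kappa_M(b)$ whose sign depends on whether $g'(0)=f'(0)$ or $g'(0)=-f'(0)$ — that is, on $\operatorname{sgn}\langle f'(0),g'(0)\rangle$. A short bookkeeping argument gives: $M$ is curved in the different sides at $a,b$ exactly when these two leading coefficients have opposite signs, which amounts to $\kappa_M(a)$ and $-\operatorname{sgn}\langle f'(0),g'(0)\rangle\,\kappa_M(b)$ having opposite signs; equivalently, writing things in the standard-pair convention (where one reparameterizes so the directions are opposite, flipping one curvature sign accordingly), the condition becomes $\kappa_f(0)$ and $\kappa_g(0)$ having the \emph{same} sign, so $\kappa_f(0)+\kappa_g(0)\neq 0$ and in fact is bounded away from $0$ in sign.

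From here the conclusion is immediate: by Remark~\ref{rem:asymptote}, a parallel pair produces an asymptote of $\CSS(M)$ only when $\kappa_f(s)+\kappa_g(t(s))=0$ at the corresponding parameter (in the standard, opposite-directions normalization). Since being curved in the different sides forces $\kappa_f(0)$ and $\kappa_g(0)$ to share a sign, their sum cannot vanish, so no asymptote arises at $a,b$. I would also note that $a,b$ are assumed not to be inflexion points, so both curvatures are nonzero and the sign comparison is meaningful — this is exactly where that hypothesis is used.

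The main obstacle I anticipate is purely notational: getting the sign in the "different sides $\Leftrightarrow$ $\kappa_M(a)$ opposite to $-\operatorname{sgn}\langle f',g'\rangle\kappa_M(b)$" equivalence correct and consistent with the standard-pair convention used in \eqref{param_l_ostateczna} and Remark~\ref{rem:asymptote}, since the curvature sign flips when one switches between "same direction" and "opposite direction" parameterizations. Once that dictionary is pinned down — ideally by drawing the two graphs over the common tangent line as in Figure~\ref{FigCurved} and reading off the leading coefficients — the proof is a one-line consequence of Remark~\ref{rem:asymptote}.
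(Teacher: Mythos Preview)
Your proposal is correct and follows essentially the same route as the paper: the paper's proof simply asserts that for a standard pair, ``curved in the different sides'' forces $\kappa_M(a)$ and $\kappa_M(b)$ to have the same sign, so their sum cannot vanish, and then invokes Remark~\ref{rem:asymptote}. You supply the second-order bookkeeping behind that sign claim, but the argument and the key ingredient are identical.
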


\begin{proof}
Let $a, b \in M$ be a standard pair of points.
When $M$ is curved in the different sides at $a$ and $b,$ we know that the curvatures of $M$ at points $a$ and $b$ have the same sign. Therefore, their sum cannot be equal to zero. By Remark \ref{rem:asymptote}, this means that $a$ and $b$ cannot produce an asymptote of $\CSS(M).$
\end{proof}

Using the Definition \ref{DefCurved} we can formulate and prove the following theorem.

\begin{thm}\label{ThmCurvPosWrinkled} 
Let $\mathcal{P},$ $\mathcal{Q}$ be curves with end points $p_0,$ $p_1$ and $q_0,$ $q_1$ respectively. We make the following assumptions.
\begin{enumerate}[(i)]
\item The points $p_i,$ $q_i$ form a parallel pair for $i=0,1.$
\item For every $q \in \mathcal{Q},$ there exists  $p\in \mathcal{P}$ such that $p,$ $q$ is a parallel pair and if $p_i,$ $q$ is a parallel pair then $q=q_i$ for $i=0,1.$
\item $\kappa_{\mathcal{P}}(p)<0$ for $p\neq p_0$, $\kappa_{\mathcal{Q}}(q_0)> 0,$ and $\kappa_{\mathcal{Q}}(q_1)\geq 0.$
\item $\mathcal{P}$, $\mathcal{Q}$ are curved in the same side at parallel pairs $p,$ $q$ close to $p_0,$ $q_0$ and $p_1,$ $q_1,$ respectively.
\end{enumerate}
Then, provided the curvatures of $\mathcal{P}$ and $\mathcal{Q}$ satisfy the following condition
\begin{align}\label{CondCurvSings}
\Big(\kappa_{\mathcal{Q}}(q_0)+\kappa_{\mathcal{P}}(p_0)\Big)\cdot
\Big(\kappa_{\mathcal{Q}}(q_1)+\kappa_{\mathcal{P}}(p_1)\Big)<0,
\end{align}
the Centre Symmetry Set of $\mathcal{P}\cup\mathcal{Q}$ has at least one asymptote.
\end{thm}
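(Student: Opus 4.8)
The plan is to track the sign of the curvature sum $\kappa_{\mathcal{Q}}(q)+\kappa_{\mathcal{P}}(p)$ along the parallel pairs $(p,q)$ parametrizing the arcs, and to combine this with an application of the Darboux (intermediate value) property together with Remark \ref{rem:asymptote}. Concretely, by assumption (ii) the map $q\mapsto p(q)$ is a well-defined correspondence of parallel pairs, with $p(q_i)=p_i$ for $i=0,1$, so we may define a continuous function $h:\mathcal{Q}\to\mathbb{R}$ by $h(q)=\kappa_{\mathcal{Q}}(q)+\kappa_{\mathcal{P}}(p(q))$. The endpoint values are $h(q_0)=\kappa_{\mathcal{Q}}(q_0)+\kappa_{\mathcal{P}}(p_0)$ and $h(q_1)=\kappa_{\mathcal{Q}}(q_1)+\kappa_{\mathcal{P}}(p_1)$, whose product is negative by \eqref{CondCurvSings}. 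Hence there exists an interior point $q_\ast$ with $h(q_\ast)=0$, i.e. a parallel pair $p_\ast=p(q_\ast)$, $q_\ast$ with $\kappa_{\mathcal{P}}(p_\ast)+\kappa_{\mathcal{Q}}(q_\ast)=0$. By Remark \ref{rem:asymptote} such a parallel pair produces an asymptote of $\CSS(\mathcal{P}\cup\mathcal{Q})$, which is exactly the assertion.

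The remaining work is to justify the two implicit points in this argument: that $h$ is genuinely continuous (indeed smooth), and that the zero of $h$ actually lies in the open arc so that it corresponds to an honest parallel pair of $\mathcal{P}\cup\mathcal{Q}$ rather than a degenerate endpoint configuration. For continuity, I would invoke Proposition \ref{PropDiffeoBetweenParallelArcs} (or the local construction in Subsection \ref{sectionLocParam}, where the function $t(s)$ with $f'(s)=-g'(t(s))$ is obtained by the implicit function theorem): the correspondence $q\mapsto p(q)$ is a diffeomorphism between the relevant arcs, so $h$ is smooth on $\mathcal{Q}$. This is where assumption (iv), that $\mathcal{P}$ and $\mathcal{Q}$ are curved in the same side near the endpoints, enters — it guarantees that near $p_0,q_0$ and near $p_1,q_1$ the curvatures have opposite signs (cf. the proof of Lemma \ref{CurvedVsAsymptotes}), so that the endpoint expressions $\kappa_{\mathcal{Q}}(q_i)+\kappa_{\mathcal{P}}(p_i)$ are not forced to vanish and the product in \eqref{CondCurvSings} can indeed be strictly negative; it also pins down which side of each endpoint the curvature sum is nonzero, so the sign change of $h$ is interior.

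To see that the zero is interior and nondegenerate, I would use assumption (iii): on $\mathcal{P}$ we have $\kappa_{\mathcal{P}}(p)<0$ for all $p\neq p_0$, while $\kappa_{\mathcal{Q}}(q_0)>0$ and $\kappa_{\mathcal{Q}}(q_1)\le 0$. Thus at $q_1$, both $\kappa_{\mathcal{Q}}(q_1)\le 0$ and $\kappa_{\mathcal{P}}(p_1)<0$ (since $p_1\neq p_0$), so $h(q_1)<0$ strictly; consequently \eqref{CondCurvSings} forces $h(q_0)>0$, i.e. $\kappa_{\mathcal{Q}}(q_0)+\kappa_{\mathcal{P}}(p_0)>0$. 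The sign change of the continuous function $h$ from positive at $q_0$ to negative at $q_1$ yields a zero $q_\ast$ strictly between them, and the corresponding parallel pair $p_\ast,q_\ast$ is an interior parallel pair of $\mathcal{P}\cup\mathcal{Q}$ (not involving the endpoints), hence produces a genuine asymptote of the Centre Symmetry Set.

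The main obstacle I anticipate is not the intermediate-value step itself but the bookkeeping needed to be sure that the correspondence $q\mapsto p(q)$ is globally well-defined and continuous over the whole arc $\mathcal{Q}$ (assumption (ii) is tailored to this, but one should check that no two points of $\mathcal{P}$ are simultaneously parallel to the same $q$ in a way that breaks single-valuedness), and to confirm that a parallel pair realizing $\kappa_{\mathcal{P}}+\kappa_{\mathcal{Q}}=0$ really does lie in the domain where the local parameterization \eqref{param_l_ostateczna} degenerates to an asymptote — i.e. that it is a standard pair in the sense required by Remark \ref{rem:asymptote}. Assumptions (iii) and (iv) together ensure the curvatures do not vanish at the relevant points (so the pair is standard), which is what makes the asymptote conclusion legitimate rather than vacuous.
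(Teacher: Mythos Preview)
Your proposal is correct and follows essentially the same route as the paper's proof. The paper sets up arc length parameterizations $f,g$ of $\mathcal{Q},\mathcal{P}$, obtains the parallel correspondence $t(s)$ with $t'(s)=\kappa_{\mathcal{Q}}(f(s))/\kappa_{\mathcal{P}}(g(t(s)))$ via the implicit function theorem, observes that \eqref{CondCurvSings} translates into $\big(t'(s_0)+1\big)\big(t'(s_1)+1\big)<0$, and then applies Darboux's theorem to find $s$ with $t'(s)=-1$, i.e.\ $\kappa_{\mathcal{Q}}+\kappa_{\mathcal{P}}=0$; the case $\kappa_{\mathcal{P}}(p_0)=0$ is handled by taking the limit of $t'$ at $s_0$. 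Your function $h(q)=\kappa_{\mathcal{Q}}(q)+\kappa_{\mathcal{P}}(p(q))$ differs from $t'+1$ only by the nonvanishing factor $\kappa_{\mathcal{P}}$, so the two sign-change arguments are equivalent, and your endpoint analysis using (iii) plays the same role as the paper's limit argument at $s_0$.
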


\begin{proof} We will adapt the method used in the proof of Proposition 3.3 in \cite{D-Z3}.

Let us first assume that $\kappa_{\mathcal{P}}(p_0)>0$.
Let $g:[t_0,t_1]\to\mathbb{R}^2,$ $f:[s_0,s_1]\to\mathbb{R}^2$ be the arc length parameterizations of $\mathcal{P}, \mathcal{Q}$, respectively. From (ii)-(iii) there exists a function $t:[s_0,s_1]\to[t_0,t_1]$ such that
\begin{align}\label{ParallelPropertyThm31}
f'(s)= - g'(t(s)).
\end{align}
By the implicit function theorem the function $t$ is smooth and $\displaystyle t'(s)=\frac{\kappa_{\mathcal{Q}}(f(s))}{\kappa_{\mathcal{P}}(g(t(s)))}$. 
From \eqref{CondCurvSings} we obtain that $\big(t'(s_0)+1\big)\cdot\big(t'(s_1)+1\big)<0$. Therefore by the Darboux theorem, there exists $s\in(s_0,s_1)$ such that $t'(s)=-1$. This means that $\kappa_{\mathcal{Q}}(f(s)) = - \kappa_{\mathcal{P}}(g(t(s))),$ which along with Remark \ref{rem:asymptote} yields the thesis.

If we assume that $\kappa_{\mathcal{P}}(p_0)=0$, then the proof is similar except that the domain of the function $t'$ is $(s_0,s_1]$. Therefore, we can then replace $t'(s_0)$ by the limit of $t'$ at $s_0$.
\end{proof}

In Figure \ref{FigWrinkled1} we illustrate two curves satisfying the assumptions of Theorem \ref{ThmCurvPosWrinkled}.

\begin{figure}[h]
\centering
\includegraphics[scale=0.6]{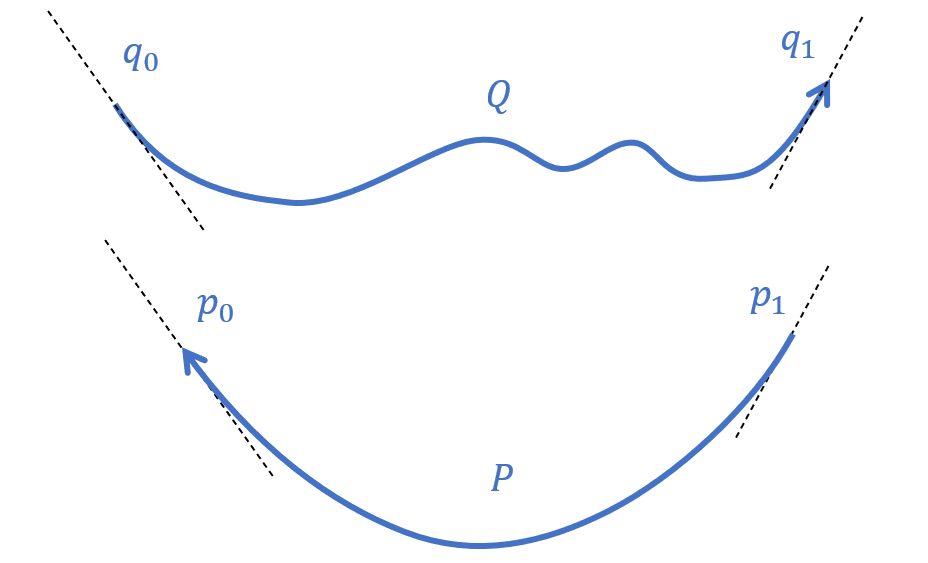}
\caption{Arcs satisfying assumptions of Theorem \ref{ThmCurvPosWrinkled}}
\label{FigWrinkled1}
\end{figure}

\begin{corollary}\label{CorThmCurvPosWrinkled}
Under the assumptions (i)-(iv) of Theorem \ref{ThmCurvPosWrinkled} and assuming that  $\kappa_{\mathcal{P}}(p_0)=\kappa_{\mathcal{Q}}(q_1)=0,$ the Centre Symmetry Set of $\mathcal{P}\cup\mathcal{Q}$ has at least one asymptote.
\end{corollary}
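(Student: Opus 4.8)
The plan is to obtain the statement as an immediate consequence of Theorem~\ref{ThmCurvPosWrinkled}: the point is that, under the extra hypotheses $\kappa_{\mathcal{P}}(p_0)=\kappa_{\mathcal{Q}}(q_1)=0$, the curvature condition \eqref{CondCurvSings} is automatically satisfied, so no work beyond the theorem is required.

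Concretely, I would argue as follows. Since $p_0$ and $p_1$ are the two (distinct) end points of $\mathcal{P}$, assumption (iii) of Theorem~\ref{ThmCurvPosWrinkled} gives $\kappa_{\mathcal{P}}(p_1)<0$ and $\kappa_{\mathcal{Q}}(q_0)>0$. Substituting $\kappa_{\mathcal{P}}(p_0)=0$ into the first factor of \eqref{CondCurvSings} yields $\kappa_{\mathcal{Q}}(q_0)+\kappa_{\mathcal{P}}(p_0)=\kappa_{\mathcal{Q}}(q_0)>0$, while substituting $\kappa_{\mathcal{Q}}(q_1)=0$ into the second factor yields $\kappa_{\mathcal{Q}}(q_1)+\kappa_{\mathcal{P}}(p_1)=\kappa_{\mathcal{P}}(p_1)<0$. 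Hence the product in \eqref{CondCurvSings} is strictly negative, all hypotheses of Theorem~\ref{ThmCurvPosWrinkled} hold, and therefore $\CSS(\mathcal{P}\cup\mathcal{Q})$ has at least one asymptote.

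If one prefers a self-contained argument, I would re-run the proof of Theorem~\ref{ThmCurvPosWrinkled} directly in this special case. Taking arc length parameterizations $g$ of $\mathcal{P}$ and $f$ of $\mathcal{Q}$, and the smooth function $t$ with $f'(s)=-g'(t(s))$ and $t'(s)=\kappa_{\mathcal{Q}}(f(s))/\kappa_{\mathcal{P}}(g(t(s)))$, one has at the $q_1$ end the value $t'=\kappa_{\mathcal{Q}}(q_1)/\kappa_{\mathcal{P}}(p_1)=0$, so there $t'+1=1>0$; and at the $q_0$ end the numerator tends to $\kappa_{\mathcal{Q}}(q_0)>0$ while the denominator tends to $\kappa_{\mathcal{P}}(p_0)=0$ through negative values, so $t'\to-\infty$ and there $t'+1<0$. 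The Darboux property of derivatives then yields an interior parameter $s$ with $t'(s)=-1$, i.e. $\kappa_{\mathcal{Q}}(f(s))+\kappa_{\mathcal{P}}(g(t(s)))=0$, and Remark~\ref{rem:asymptote} produces the asymptote.

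The argument is essentially routine, and I do not expect a genuine obstacle. The only point that needs care is the behaviour at the degenerate end point $p_0$, where $\kappa_{\mathcal{P}}$ vanishes, so $t'$ is a priori defined only on the corresponding half-open interval; this is precisely the degeneracy already treated in the proof of Theorem~\ref{ThmCurvPosWrinkled} by passing to the limit of $t'$ at that end point, and here that limit is $-\infty$, which only helps the intermediate value step.
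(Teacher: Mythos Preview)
Your proposal is correct and is exactly what the paper intends: the corollary is stated without proof as an immediate consequence of Theorem~\ref{ThmCurvPosWrinkled}, and your verification that $\kappa_{\mathcal{P}}(p_0)=\kappa_{\mathcal{Q}}(q_1)=0$ together with assumption~(iii) forces the product in~\eqref{CondCurvSings} to be strictly negative is the intended one-line check. Your self-contained re-run of the proof, including the treatment of the degenerate endpoint via the limit $t'\to-\infty$, is also correct and mirrors the paper's handling of the $\kappa_{\mathcal{P}}(p_0)=0$ case in the proof of the theorem.
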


As a consequence of the Corollary \ref{CorThmCurvPosWrinkled}, we can make the following useful observation. 

\begin{corollary}
If there exist arcs curved in the same side $\overarc{p_k}{p_l}$ and $\overarc{p_{k'}}{p_{l'}}$ such that $p_k,$ $p_{l'}$ or $p_{k'},$ $p_l$ are inflexion points of $M,$ then $\CSS(M)$ has at least one asymptote.
\end{corollary}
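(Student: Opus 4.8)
The plan is to reduce the statement directly to Corollary \ref{CorThmCurvPosWrinkled} by identifying the arcs $\mathcal{P}$ and $\mathcal{Q}$ appearing there with (sub)arcs of $\overarc{p_k}{p_l}$ and $\overarc{p_{k'}}{p_{l'}}$, and by verifying that the hypotheses (i)--(iv) of Theorem \ref{ThmCurvPosWrinkled} hold in this situation. First I would treat the case in which $p_k$ and $p_{l'}$ are inflexion points of $M$ (the other case is symmetric, obtained by relabelling). By Remark \ref{RemInflexionInScheme}, the only inflexion points that can occur among the endpoints of a set of parallel arcs are the division points, so the arcs $\overarc{p_k}{p_l}$ and $\overarc{p_{k'}}{p_{l'}}$ have no inflexion points in their interiors; hence $\kappa_M$ has a constant sign on each of them away from the endpoints. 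Since the two arcs are curved in the same side, Definition \ref{DefCurved} tells us that the curvatures of $M$ at a parallel pair $p\in\overarc{p_k}{p_l}$, $q\in\overarc{p_{k'}}{p_{l'}}$ have opposite signs; combining this with the inflexion condition $\kappa_M(p_k)=\kappa_M(p_{l'})=0$, we can orient the arcs so that, writing $\mathcal{P}=\overarc{p_k}{p_l}$ with $p_0=p_k$, $p_1=p_l$ and $\mathcal{Q}=\overarc{p_{k'}}{p_{l'}}$ with $q_0=p_{l'}$, $q_1=p_{k'}$ (so that $p_0,q_1$ are the inflexion points and $p_0,q_0$ as well as $p_1,q_1$ are the relevant parallel pairs guaranteed by Proposition \ref{PropDiffeoBetweenParallelArcs}), we get $\kappa_{\mathcal{P}}(p)<0$ for $p\ne p_0$, $\kappa_{\mathcal{Q}}(q_0)>0$, and $\kappa_{\mathcal{Q}}(q_1)=0\le 0$, which is exactly assumption (iii). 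Assumptions (i) and (ii) follow from the fact that $\overarc{p_k}{p_l}$ and $\overarc{p_{k'}}{p_{l'}}$ lie in the same set of parallel arcs $\Phi_i$ together with Proposition \ref{PropDiffeoBetweenParallelArcs} (the parallel-point correspondence is a diffeomorphism matching endpoints to endpoints), and assumption (iv) is the hypothesis that the arcs are curved in the same side.

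With (i)--(iv) in place and with the additional equalities $\kappa_{\mathcal{P}}(p_0)=\kappa_{\mathcal{Q}}(q_1)=0$ satisfied by construction, Corollary \ref{CorThmCurvPosWrinkled} applies verbatim and gives that $\CSS(\mathcal{P}\cup\mathcal{Q})$ has at least one asymptote. Since $\mathcal{P}\cup\mathcal{Q}\subset M$ and an asymptote of the Centre Symmetry Set of a subarc that is realised by a genuine parallel pair of $M$ (not created by passing to a subarc) is also an asymptote of $\CSS(M)$ — here the asymptote comes from a parallel pair $p\in\overarc{p_k}{p_l}$, $q\in\overarc{p_{k'}}{p_{l'}}$ with $\kappa_M(p)+\kappa_M(q)=0$, which is a parallel pair of $M$ itself — we conclude that $\CSS(M)$ has at least one asymptote. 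The case where $p_{k'},p_l$ are the inflexion points is handled identically after swapping the roles of the two arcs.

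The step I expect to require the most care is the orientation bookkeeping needed to match the sign conventions of Theorem \ref{ThmCurvPosWrinkled}: one must check that the arcs can simultaneously be parameterized at opposite directions (as required for the parallel-pair function $t(s)$ with $f'=-g'$ to exist) and so that the signs $\kappa_{\mathcal{P}}<0$, $\kappa_{\mathcal{Q}}(q_0)>0$ come out correctly rather than all reversed. This is where "curved in the same side" is used essentially: it pins down the relative sign of the two curvature functions along the arcs, and only one of the two global orientations of the pair $(\mathcal{P},\mathcal{Q})$ is consistent with hypothesis (iii). Once the correct labelling is fixed, everything else is a direct citation of the results already established, so the corollary follows with no further computation.
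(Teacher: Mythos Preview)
Your approach is exactly the one the paper intends: the corollary is stated immediately after Corollary~\ref{CorThmCurvPosWrinkled} with the remark ``As a consequence of Corollary~\ref{CorThmCurvPosWrinkled}\ldots'', and no separate proof is given. Reducing to that corollary by taking $\mathcal{P}$ and $\mathcal{Q}$ to be the two parallel arcs and checking (i)--(iv) is precisely the intended argument.

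There is, however, a bookkeeping slip in your endpoint assignment that you should fix. By Definition~\ref{DefSetParallArcs}, for two arcs $\overarc{p_k}{p_l}$ and $\overarc{p_{k'}}{p_{l'}}$ in the same $\Phi_i$ one has $\varphi_M(s_k)=\varphi_M(s_{k'})=\varphi_i$ and $\varphi_M(s_l)=\varphi_M(s_{l'})=\varphi_{i+1}$, so the parallel pairs at the endpoints are $p_k,p_{k'}$ and $p_l,p_{l'}$, \emph{not} $p_k,p_{l'}$ and $p_l,p_{k'}$. Consequently, with $p_0=p_k$, $p_1=p_l$ you must take $q_0=p_{k'}$ and $q_1=p_{l'}$ (the opposite of what you wrote). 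With this correction your parenthetical becomes consistent: $p_0=p_k$ and $q_1=p_{l'}$ are the inflexion points, $(p_0,q_0)=(p_k,p_{k'})$ and $(p_1,q_1)=(p_l,p_{l'})$ are the parallel pairs, and hypothesis~(iii) together with $\kappa_{\mathcal{P}}(p_0)=\kappa_{\mathcal{Q}}(q_1)=0$ holds as you describe. After this swap the rest of your argument goes through verbatim.
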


\begin{definition}\label{RotationNumber}
A \textit{rotation number} of a regular curve is a rotation number of its velocity vector field.
\end{definition}

\begin{thm}\label{ThmAlgEvenNoOfCusp}
Let $C$ be a smooth branch of $\SC(M)$ which does not connect inflexion points. Let the maximal glueing scheme of $C$ be of the form
$$\begin{array}{ccccc}
a	&\frown&	 \ldots 	&\frown& a \\ \hline
b 	&\frown&	\ldots 	&\frown& b \\ \hline
\end{array},$$
where $a \neq b,$ and let $D$ be a smooth branch of $\CSS(M)$ produced by the maximal glueing scheme of $C.$ Then, the number of asymptotes of $D$ is equal to the number of cusps of $C.$ On the other hand, if the maximal glueing scheme of $C$ is of the form
$$\begin{array}{ccccc}
a	&\frown&	 \ldots 	&\frown& b \\ \hline
b 	&\frown&	\ldots 	&\frown& a \\ \hline
\end{array},$$ the number of asymptotes of $D$ is half the number of cusps of $C.$
\end{thm}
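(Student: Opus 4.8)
The plan is to exploit the correspondence from Remark \ref{SCvsCSS}: a parallel pair produces an asymptote of $\CSS(M)$ exactly when it produces a singular point of $\SC(M)$. Since both $C$ and $D$ are built from the \emph{same} maximal glueing scheme $\begin{array}{ccccc} a&\frown&\ldots&\frown& * \\ \hline b&\frown&\ldots&\frown& * \\ \hline \end{array}$, the parallel pairs appearing along $C$ are in bijection with the parallel pairs appearing along $D$. Thus the asymptotes of $D$ are in bijection with those parallel pairs in the scheme at which $\kappa_M(a)+\kappa_M(b)=0$ (equivalently $t'(s)=-1$ in the notation of the local parameterization), and it suffices to show that on the secant caustic side the same condition $\kappa_M(a)+\kappa_M(b)=0$ — read off the standard-pair convention with the appropriate sign — characterizes the cusps of $C$. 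For a standard pair, $\SC(M)$ is parameterized by $s\mapsto f(s)-g(t(s))$ with derivative $f'(s)-g'(t(s))t'(s) = f'(s)\big(1+t'(s)\big)$ (using $f'=-g'$); this vanishes precisely when $t'(s)=-1$, i.e.\ when $\kappa_M(a)+\kappa_M(b)=0$, and genericity (Theorem \ref{ThmGenericCSS}, and the genericity hypotheses ensuring the relevant nondegeneracy for $\SC$ from \cite{Romero}) guarantees that each such point is an ordinary cusp. So the counting reduces to comparing how many times the parameter of the scheme hits a singular pair, depending on whether the scheme closes up as $a\cdots a$ over $b\cdots b$ or as $a\cdots b$ over $b\cdots a$.

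Next I would analyze the two cases by tracking the natural parameter along the glueing scheme. In the first case, $\begin{array}{ccccc} a&\frown&\ldots&\frown& a \\ \hline b&\frown&\ldots&\frown& b \\ \hline \end{array}$, the scheme traverses $C$ once as a closed loop: starting from the parallel pair $(a,b)$ and returning to $(a,b)$. Every singular point of $C$ is encountered exactly once as we run through the scheme, and by the bijection of parallel pairs, it corresponds to exactly one asymptote of $D$. Hence $\#(\text{asymptotes of }D) = \#(\text{cusps of }C)$. In the second case, $\begin{array}{ccccc} a&\frown&\ldots&\frown& b \\ \hline b&\frown&\ldots&\frown& a \\ \hline \end{array}$, the scheme begins at $(a,b)$ and ends at $(b,a)$, which is the \emph{same unordered} parallel pair traversed with the roles of the two arcs swapped; running the scheme twice in succession (i.e.\ continuing past $(b,a)=(a,b)$) retraces the full closed secant-caustic branch. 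Equivalently, the map $(a,b)\mapsto a-b$ sends the pair $(p,q)$ and the swapped pair $(q,p)$ to antipodal points $a-b$ and $b-a$; so $C$, viewed as the image under this map, double-covers the scheme, meaning the glueing scheme passes through each singular pair of $C$ twice per full traversal of $C$ — wait, more precisely: the scheme as written covers "half" of the closed branch $C$, so the cusps of $C$ split into two groups interchanged by the central symmetry $x\mapsto -x$ of $\SC(M)$, while each asymptote of $D$ arises once per singular pair \emph{in the scheme}. This gives $\#(\text{asymptotes of }D) = \tfrac12\#(\text{cusps of }C)$.

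The main obstacle I anticipate is making the double-cover bookkeeping in the second case airtight: I must pin down precisely what "the maximal glueing scheme of $C$" means as a subset of $M\times M$ versus how $C\subset\SC(M)$ and $D\subset\CSS(M)$ are its images, and verify that the involution $(p,q)\mapsto(q,p)$ on parallel pairs induces the central symmetry on $\SC$ but acts trivially on $\CSS$ (since $\pi_{\CSS}(p,q)=\pi_{\CSS}(q,p)$ by symmetry of formula \eqref{param_l_ostateczna}). That asymmetry is exactly the source of the factor $2$: $\SC$ distinguishes $(p,q)$ from $(q,p)$, $\CSS$ does not. I would also need to confirm that no singular pair of $C$ is a fixed point of this involution (which would be a pair with $p=q$, excluded), so the $2$-to-$1$ count has no ramification, and that the hypothesis "$C$ does not connect inflexion points" places us in the closed-branch case of Proposition \ref{PropAlgPartBetweenIflPt} so that Lemma \ref{LemPropMaxGlueSchemes}(ii) indeed forces the scheme into one of the two stated shapes. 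The remaining steps — the derivative computation for $\SC$ and the cusp nondegeneracy — are routine given the local parameterization already set up in the paper.
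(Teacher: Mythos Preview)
Your proposal is correct and follows essentially the same approach as the paper: both rely on Remark \ref{SCvsCSS} to identify asymptotes of $D$ with singular parallel pairs, and both handle the second case by noting that the glueing scheme covers only half of the closed secant-caustic branch $C$ (the paper says the scheme must be ``prolonged'' by appending the row-swapped copy, while you phrase the same thing via the involution $(p,q)\mapsto(q,p)$ and the central symmetry of $\SC$). Your explicit derivative computation for $\SC$ and your observation that $\pi_{\CSS}(p,q)=\pi_{\CSS}(q,p)$ while $\pi_{\SC}(p,q)=-\pi_{\SC}(q,p)$ make the source of the factor $2$ a bit more transparent, but the argument is the same.
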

\begin{proof}
The secant caustic of $M$ is a union of smooth parameterized curves. Each of these curves will be called a \textit{smooth branch} of $\SC(M).$

The glueing scheme presented in Section \ref{decomp} produces not only smooth branches of $\CSS(M)$ but also smooth branches of the secant caustic $\SC(M),$ but in a slightly different way. 
Recall that the image of every maximal glueing scheme of $M$ under the $\CSS\text{-}$point map $\pi_{\CSS}$ is a smooth branch of the Centre Symmetry Set of $M.$
The \textit{secant map} of the curve $M$ can be defined as the following:
\begin{align*}
\pi_{\SC}: M\times M\to\mathbb{R}^2, (a, b)\mapsto a-b.
\end{align*}
Then, the image of every maximal glueing scheme of $M$ under the secant map $\pi_{\SC}$ is a smooth branch or a part of a smooth branch of the secant caustic of $M.$ 
For further details see \cite{Romero}.

We will now consider three possible versions of maximal glueing schemes whose images are smooth branches of $\CSS(M)$ and see in what way they translate to smooth branches of $\SC(M).$

If the maximal glueing scheme of $C$ is of the form
$\begin{array}{ccccccc}
a	&\frown&  \tilde{a}	&\frown&	 \ldots 	&\frown& a \\ \hline
b 	&\frown&  \tilde{b}	&\frown&	\ldots 	&\frown& b \\ \hline
\end{array},$ where $a \neq b,$ then the image of the maximal glueing scheme of $C$ under the secant map $\pi_{\SC}$ is a smooth branch. When using $\CSS$-point map, some of the obtained points correspond to the asymptotes of $\CSS(M).$ 
By Remark \ref{SCvsCSS} the same points, when using secant map, give rise to singular points of $\SC(M).$ Therefore in this case, the number of asymptotes of $\CSS(M)$ is equal to the number of cusps of $C.$

If the maximal glueing scheme of $C$ is of the form
$\begin{array}{ccccccc}
a	&\frown&  \tilde{a}	&\frown&	 \ldots 	&\frown& b \\ \hline
b 	&\frown&  \tilde{b}	&\frown&	\ldots 	&\frown& a \\ \hline
\end{array},$ where $a \neq b,$ then the image of the maximal glueing scheme of $C$ under the secant map $\pi_{\SC}$ is only a part of a smooth branch. Since $\pi_{\SC}(a,b) \neq \pi_{\SC}(b,a),$ for the image to be a smooth branch, the maximal glueing scheme needs to be prolonged into the form $\begin{array}{ccccccccccccc}
a	&\frown&  \tilde{a}	&\frown&	 \ldots 	&\frown& b	&\frown&  \tilde{b}	&\frown&	 \ldots 	&\frown& a \\ \hline
b 	&\frown&  \tilde{b}	&\frown&	\ldots 	&\frown& a	&\frown&  \tilde{a}	&\frown&	 \ldots 	&\frown& b \\ \hline
\end{array}.$
In such case, for each asymptote of $\CSS(M)$ there are always two points that (using secant map) give rise to different singular points of $\SC(M),$ which both correspond to this asymptote.
Therefore, the number of asymptotes of $\CSS(M)$ is two times smaller than the number of cusps of $C.$

Similarly to the previous case, if the maximal glueing scheme of $C$ is of the form
$\begin{array}{ccccccc}
a	&\frown&  \tilde{a}	&\frown&	 \ldots 	&\frown& b \\ \hline
a 	&\frown&  \tilde{b}	&\frown&	\ldots 	&\frown& b \\ \hline
\end{array},$ where $a \neq b$ and $a$ and $b$ are inflexion points, then again the image of the maximal glueing scheme of $C$ under the secant map $\pi_{\SC}$ is only a part of a smooth branch. For it to be a smooth branch, the maximal glueing scheme needs to be prolonged into the form $\begin{array}{ccccccccccccc}
a	&\frown&  \tilde{a}	&\frown&	 \ldots 	&\frown& b	&\frown&  \tilde{b}	&\frown&	 \ldots 	&\frown& a \\ \hline
a 	&\frown&  \tilde{b}	&\frown&	\ldots 	&\frown& b	&\frown&  \tilde{a}	&\frown&	 \ldots 	&\frown& a \\ \hline
\end{array}.$
In such case the number of asymptotes of $\CSS(M)$ is two times smaller than the number of cusps of $C.$
\end{proof}

In what follows, $T_{q}\mathcal{C}$ will denote the tangent space, which in our case, is a line tangent to the regular curve $\mathcal{C}$ at the point $q.$

\begin{thm}\label{pqTheorem2}
Let $\mathcal{P}$ and $\mathcal{Q}$ be regular curves with endpoints $p_0,$ $p_1$ and $q_0,$ $q_1,$ respectively. Let $\ell_0$ be a line through $q_0$ parallel to $T_{q_1}\mathcal{Q},$ $\ell_q$ be a line through $q_1$ parallel to $T_{q_0}\mathcal{Q}$ and $\ell_p'$ be a line through $\tau_{q_0-p_0}(p_1)$ parallel to $T_{q_0}\mathcal{Q}.$ Denote  $c=\ell_p'\cap T_{q_1}\mathcal{Q},$ $b_0=\ell_0\cap \ell_p'$ and $b_1=T_{q_0}\mathcal{Q}\cap T_{q_1}\mathcal{Q}$ and let us make the following assumptions.
\begin{enumerate}[(i)]
\item $T_{p_i}\mathcal{P}\| T_{q_i}\mathcal{Q}$ for $i=0,1.$
\item The curvature of $\mathcal{P}$ is positive and the curvature of $\mathcal{Q}$ is negative.
\item The absolute values of rotation number of $\mathcal{P}$ and $\mathcal{Q}$ are the same and smaller than $\frac{1}{2}.$
\item For every point $p \in \mathcal{P},$ there exists a point $q \in \mathcal{Q}$ and for every point $q \in \mathcal{Q}$ there exists a point $p \in \mathcal{P},$ such that $p,$ $q$ is a parallel pair.
\item $\mathcal{P}$ and $\mathcal{Q}$ are curved in the same side at every parallel pair $p,$ $q$ such that $p\in\mathcal{P}$ and $q\in\mathcal{Q}$.
\end{enumerate}

\noindent Let $\rho_{\max}$ (respectively $\rho_{\min}$) be the maximum (respectively minimum) of the set
\begin{align*}
\left\{\frac{c-b_1}{q_1-b_1}, \frac{c-b_0}{\tau_{q_0-p_0}(p_1)-b_0}\right\}.
\end{align*}

\noindent Then provided $\rho_{\max}<1$ or $\rho_{\min}>1$, the Centre Symmetry Set of $\mathcal{P}\cup\mathcal{Q}$ has at least one asymptote.
\end{thm}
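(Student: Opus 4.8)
The plan is to reduce Theorem \ref{pqTheorem2} to Theorem \ref{ThmCurvPosWrinkled} by checking that the sign condition \eqref{CondCurvSings} on the boundary curvature sums is forced by the hypotheses $\rho_{\max}<1$ or $\rho_{\min}>1$. First I would fix arc length parameterizations $g:[t_0,t_1]\to\mathbb{R}^2$ of $\mathcal{P}$ and $f:[s_0,s_1]\to\mathbb{R}^2$ of $\mathcal{Q}$, using assumptions (i), (iv), (v) together with Proposition \ref{PropDiffeoBetweenParallelArcs} to produce the smooth reparameterization $t(s)$ with $f'(s)=-g'(t(s))$ and $t'(s)=\kappa_{\mathcal{Q}}(f(s))/\kappa_{\mathcal{P}}(g(t(s)))$, exactly as in the proof of Theorem \ref{ThmCurvPosWrinkled}. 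Since by (ii) the curvature of $\mathcal{P}$ is positive and of $\mathcal{Q}$ is negative, $t'(s)<0$ throughout, so $t$ is an orientation-reversing diffeomorphism; the existence of an asymptote of $\CSS(\mathcal{P}\cup\mathcal{Q})$ is, by Remark \ref{rem:asymptote}, equivalent to the existence of $s$ with $t'(s)=-1$, i.e. with $\kappa_{\mathcal{Q}}(f(s))+\kappa_{\mathcal{P}}(g(t(s)))=0$. By the Darboux (intermediate value) theorem applied to $t'+1$, this happens as soon as $\big(t'(s_0)+1\big)\big(t'(s_1)+1\big)<0$, i.e. as soon as condition \eqref{CondCurvSings} holds at the endpoints.

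The heart of the argument is therefore the geometric identification of the quantities
$$\rho_0:=\frac{c-b_1}{q_1-b_1},\qquad \rho_1:=\frac{c-b_0}{\tau_{q_0-p_0}(p_1)-b_0}$$
with the endpoint curvature ratios. I would show that, up to sign, $t'(s_1)$ equals $-\rho_0$ (or $-1/\rho_0$) and $t'(s_0)$ equals $-\rho_1$ (or $-1/\rho_1$) — more precisely that $-t'(s_i)$ and $\rho_i$ lie on the same side of $1$. The idea is that at a parallel pair $p,q$ the ratio $-t'(s) = \kappa_{\mathcal{Q}}(q)/(-\kappa_{\mathcal{P}}(p)) = |\kappa_{\mathcal{Q}}(q)|/|\kappa_{\mathcal{P}}(p)|$ is, via the support-function picture (as in \cite{Zwierz3}) or directly via osculating circles, a cross-ratio of intersection points of tangent lines of $\mathcal{Q}$ with the translated tangent line of $\mathcal{P}$; the auxiliary lines $\ell_0,\ell_q,\ell_p'$ and the points $b_0,b_1,c$ are precisely set up so that $\rho_0,\rho_1$ are these cross-ratios at the two endpoints. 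Concretely: $b_1=T_{q_0}\mathcal{Q}\cap T_{q_1}\mathcal{Q}$ and $c=\ell_p'\cap T_{q_1}\mathcal{Q}$ encode the limiting positions of chords near $q_1$, and the ratio $(c-b_1)/(q_1-b_1)$ compares the "speed" of $\tau_{q_0-p_0}(\mathcal{P})$ against that of $\mathcal{Q}$ as seen from $b_1$; assumption (v) (curved in the same side) guarantees the relevant curvatures have opposite signs so that the comparison is with the value $-1$ of $t'$, and assumption (iii) (rotation numbers equal and $<\tfrac12$) guarantees the tangent lines sweep monotonically through less than a half-turn so that all these intersection points are well-defined and the cross-ratios are finite and do not wrap around. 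Having established $\rho_i>1 \iff -t'(s_i)>1$ and $\rho_i<1\iff -t'(s_i)<1$, the condition $\rho_{\max}<1$ forces $-t'(s_0),-t'(s_1)<1$ — wait, that alone does not give a sign change; rather one of $\rho_0,\rho_1$ must straddle. Here I would use that $\rho_{\max}<1$ means \emph{both} $\rho_i<1$ while monotonicity/continuity of $t'$ between $s_0$ and $s_1$, combined with the behaviour at the \emph{interior} forced by (ii)–(v) (the curvature ratio must pass through a regime where it exceeds $1$ when the two arcs are genuinely "wrinkled" against each other), yields a value $t'(s^*)=-1$; symmetrically $\rho_{\min}>1$ means both $\rho_i>1$ and forces a crossing from below. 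I would make this precise by showing $\rho_0,\rho_1$ are the endpoint values of the same continuous function $-t'$ and that the hypothesis puts both endpoints strictly on one side of $1$ while a boundary-curvature computation (using $\kappa_{\mathcal{Q}}(q_0)>0$-type normalizations inherited from the ambient curve $M$, cf. Theorem \ref{ThmCurvPosWrinkled}(iii)) puts the complementary behaviour at an interior point, so \eqref{CondCurvSings} holds for a suitable sub-pair of endpoints and Theorem \ref{ThmCurvPosWrinkled} applies.

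The main obstacle I anticipate is exactly this last identification: verifying rigorously, through the chain of auxiliary constructions $\ell_0,\ell_q,\ell_p',b_0,b_1,c$, that $\rho_0$ and $\rho_1$ are the correct (signed) limiting values of $-t'$ at $s_1$ and $s_0$, and tracking signs so that "$\rho<1$ versus $\rho>1$" lands on the correct side of the critical value $-1$ of $t'$. This is a projective-geometry computation — one identifies $-t'(s)$ at a parallel pair with the ratio in which the point $b(s):=T_{q(s)}\mathcal{Q}\cap T_{q_0}\mathcal{Q}$-type intersection divides the segment between the moving point and its $\mathcal{P}$-counterpart — and the role of assumption (iii) is subtle: it is what prevents the tangent directions from returning to a previous value, so that the intersection points $b_0,b_1,c$ are unambiguous and the cross-ratios are honest real numbers rather than being spoiled by the tangent line becoming parallel to a reference line. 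Once the dictionary between $\{\rho_{\min},\rho_{\max}\}$ and $\{-t'(s_0),-t'(s_1)\}$ is in place, the conclusion is immediate from the Darboux-theorem step above, in complete parallel with the proof of Theorem \ref{ThmCurvPosWrinkled}.
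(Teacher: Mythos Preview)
Your proposal has a genuine gap, and the approach diverges from the paper's in a way that matters.

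The central step of your plan is the ``dictionary'' identifying $\rho_0,\rho_1$ with the endpoint values $-t'(s_1),-t'(s_0)$ of the curvature ratio. This identification is not correct: the quantities $\rho_0,\rho_1$ are ratios of \emph{positions} along tangent lines (they compare where $\tau_{q_0-p_0}(p_1)$ sits relative to $q_1$ in the frame of tangent directions), whereas $-t'(s_i)=|\kappa_{\mathcal Q}|/|\kappa_{\mathcal P}|$ is a ratio of \emph{curvatures}. These live at different differential orders and there is no reason for them to agree. You in fact notice the mismatch yourself (``wait, that alone does not give a sign change'') and then try to patch it by asserting that $-t'$ must exceed $1$ at some interior point, but nothing in assumptions (i)--(v) forces that; your appeal to ``wrinkling'' and to the $\kappa_{\mathcal Q}(q_0)>0$-type hypotheses of Theorem~\ref{ThmCurvPosWrinkled} is not available here, since $\mathcal Q$ has strictly negative curvature throughout by (ii). So the reduction to Theorem~\ref{ThmCurvPosWrinkled} via condition~\eqref{CondCurvSings} does not go through.

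The paper's argument runs in the opposite direction. One uses that the property ``a parallel pair produces an asymptote of the $\CSS$'' is affine invariant, applies an affine map sending the parallelogram built on $T_{q_0}\mathcal Q$ and $T_{q_1}\mathcal Q$ to the unit square, and then argues by contradiction: if $\kappa_{\mathcal P}+\kappa_{\mathcal Q}$ never vanishes, then $t'+1$ has constant sign on $[s_0,s_1]$, and \emph{integrating} this inequality from $s_0$ to $s_1$ yields an inequality between the endpoint positions of $\tau_{q_0-p_0}(\mathcal P)$ and $\mathcal Q$ in the normalized frame. That integrated inequality is exactly what the ratios $\rho_0,\rho_1$ encode, and it contradicts $\rho_{\max}<1$ (respectively $\rho_{\min}>1$). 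In short: the $\rho$-conditions are not endpoint curvature data to be fed into a Darboux argument; they are the global positional constraints that an \emph{everywhere} nonvanishing $t'+1$ would violate. This is the adaptation of Theorem~2.26 in \cite{Romero} and Proposition~3.7 in \cite{D-Z3} that the paper invokes.
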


\begin{figure}[h]
\centering
\includegraphics[scale=0.5]{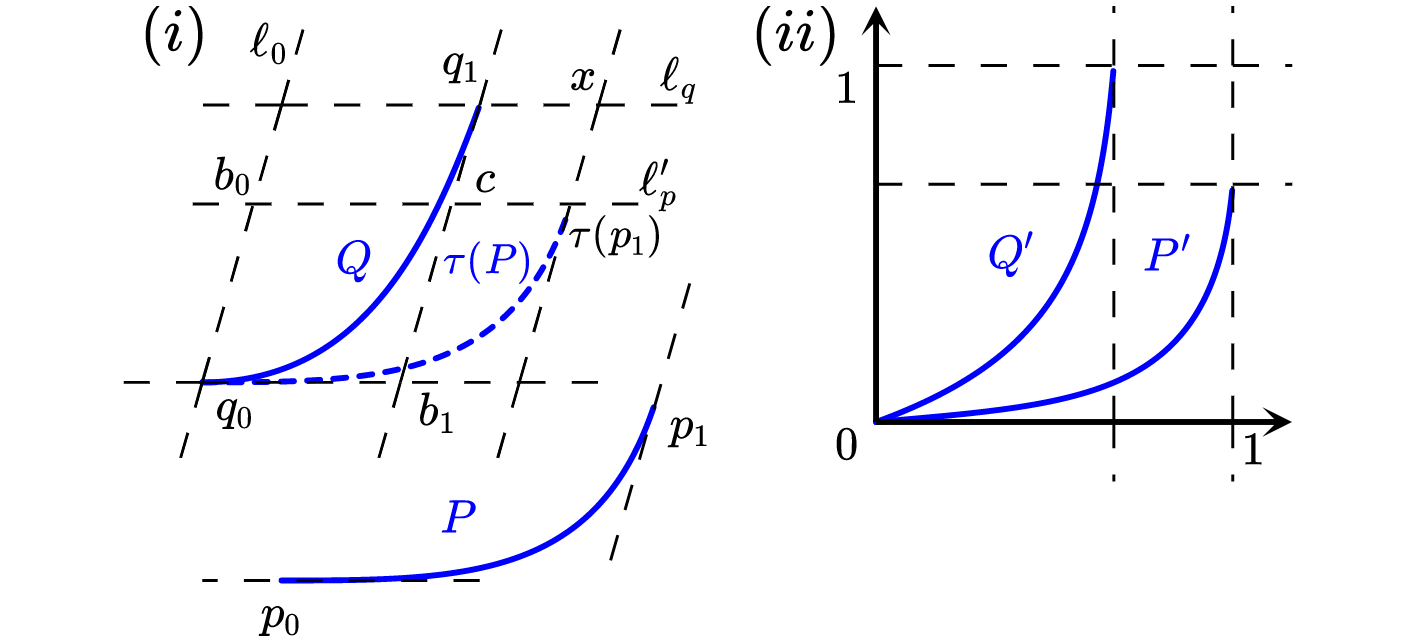}
\caption{Arcs satisfying the assumptions of Theorem \ref{pqTheorem2}}
\label{PictureThmParallelogram}
\end{figure}

Theorem \ref{pqTheorem2} can be proven by a straightforward adaptation of the method used in the proof of Theorem 2.26 in \cite{Romero} and of Proposition 3.7 in \cite{D-Z3}. The sketch of the proof is as follows: since a parallel pair that produces an asymptote of the $\CSS$ is an affine invariant, using an affine map we transform the arcs in Figure \ref{PictureThmParallelogram}(i) into Figure \ref{PictureThmParallelogram}(ii) such that the parallelogram with the endpoints at $q_0$ and $x$ goes to the unit square. Then, assuming there is no asymptote of the $\CSS$ we can find a contradiction.

In other words, we know more about the geometry of the Centre Symmetry Set of the sum $\mathcal{P}\cup\mathcal{Q},$ when the above conditions are fulfilled. 
These conditions allow us to compare pairs of regular curves $\mathcal{P}$ and $\mathcal{Q}$ after  translating one of them, so that they have a common endpoint. 
Let $x$ denote the point of intersection of $\ell_q$ with a line parallel to $\ell_0$ passing through $\tau(p_1)$ (see Figure \ref{PictureThmParallelogram}(i)). 
Then, we construct a parallelogram with opposite vertices $q_0$ and $\ell_q,$ based on the lines tangent to the curves. 
We compare the appropriate segments in the parallelogram in order to find out if both curves $\mathcal{P}$ and $\mathcal{Q}$ are contained in it (compare Figures \ref{PictureThmParallelogram}(i) and \ref{PictureThmParallelogramWrong}).
When this is the case, the conditions described above are fulfilled, and we can conclude that the Centre Symmetry Set of $\mathcal{P}\cup\mathcal{Q}$ has at least one asymptote.

\begin{figure}[h]
\centering
\includegraphics[scale=0.45]{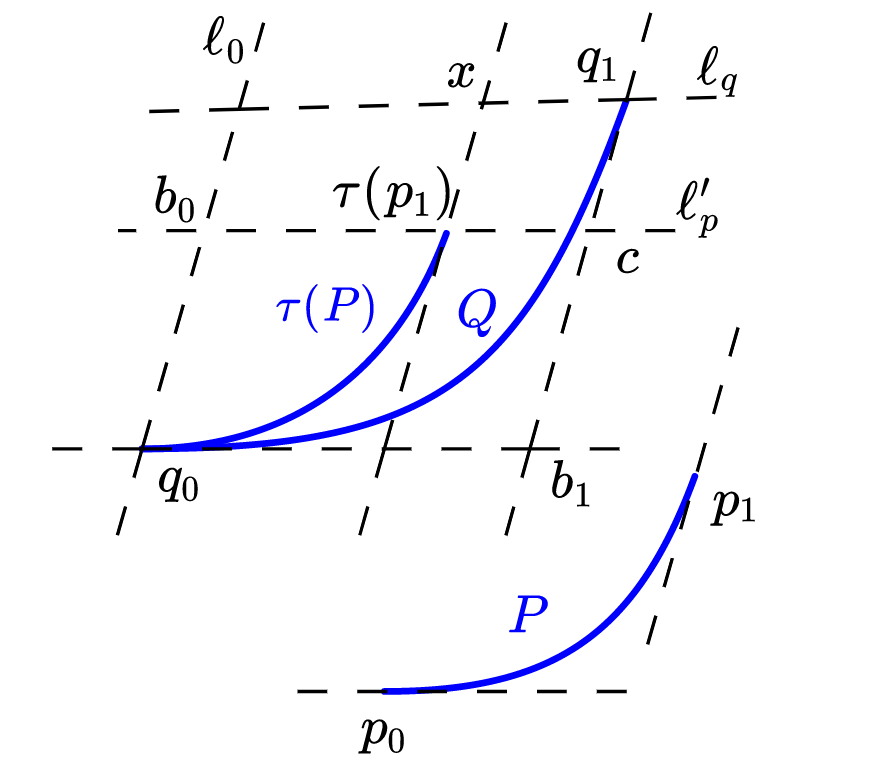}
\caption{Arcs not satisfying the assumptions of Theorem \ref{pqTheorem2}}
\label{PictureThmParallelogramWrong}
\end{figure}

\begin{prop}
Let $M$ a generic curve. If $\CSS(M)$ has an asymptote, then the Centre Symmetry Set approaches the asymptote from the opposite sides and the opposite ends as illustrated in Figure \ref{FigureAsymptotes}(i).
\end{prop}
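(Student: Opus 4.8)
The plan is to analyse the natural parameterization \eqref{param_l_ostateczna} of $\CSS(M)$ in a punctured neighbourhood of the parameter value where the denominator $\kappa_f+\kappa_g$ vanishes, and to read off the two ends and the two sides from its leading terms. Let $a=f(s_*)$, $b=g(t_*)$ (with $t_*=t(s_*)$) be a standard parallel pair producing the given asymptote (see Subsection \ref{sectionLocParam} and Remark \ref{rem:asymptote}), so that $h(s):=\kappa_f(s)+\kappa_g(t(s))$ satisfies $h(s_*)=0$; by Remark \ref{rem:asymptote} the asymptote is the affine chord $\ell(a,b)$. First, $\kappa_f(s_*)\neq 0$, for otherwise $\kappa_g(t_*)=-\kappa_f(s_*)=0$ and both $a$ and $b$ would be inflexion points, contradicting Property (ii) of Theorem \ref{ThmGenericCSS}; likewise $\kappa_g(t_*)\neq 0$. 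On a punctured neighbourhood of $s_*$ we have $\gamma_{\CSS}(s)=N(s)/h(s)$ with $N(s)=\kappa_f(s)f(s)+\kappa_g(t(s))g(t(s))$, and $N(s_*)=\kappa_f(s_*)(a-b)$ is a nonzero vector parallel to $\ell(a,b)$.

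Using \eqref{t_prim} together with $\kappa_g(t_*)=-\kappa_f(s_*)$ one finds $h'(s_*)=\kappa_f'(s_*)-\kappa_g'(t_*)$. Since $a,b$ is standard, $\operatorname{sgn}\langle f'(s_*),g'(t_*)\rangle=-1$, so Property (v) of Theorem \ref{ThmGenericCSS} applies and yields $\kappa_f'(s_*)\kappa_g^2(t_*)\neq\kappa_f^2(s_*)\kappa_g'(t_*)$; as $\kappa_g^2(t_*)=\kappa_f^2(s_*)\neq 0$, this says exactly $h'(s_*)\neq 0$, so $h$ has a simple zero at $s_*$. Writing $h(s)=h'(s_*)(s-s_*)+O\big((s-s_*)^2\big)$ we obtain the Laurent-type expansion
\[
\gamma_{\CSS}(s)=\frac{1}{s-s_*}\,w+C+v_1(s-s_*)+O\big((s-s_*)^2\big),\qquad w:=\frac{N(s_*)}{h'(s_*)},
\]
where $w$ is a nonzero multiple of $a-b$ and $C,v_1$ are constant vectors. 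As $s\to s_*^{+}$ the coefficient $1/(s-s_*)$ tends to $+\infty$ and as $s\to s_*^{-}$ it tends to $-\infty$, so the two one-sided branches of $\gamma_{\CSS}$ run off to infinity along the two opposite directions of the asymptote, i.e.\ to its opposite ends. Moreover the asymptote of the plane curve $s\mapsto\gamma_{\CSS}(s)$ is the line through $C$ with direction $w$; since by Remark \ref{rem:asymptote} this line is $\ell(a,b)$ and $w\parallel(a-b)$, the point $C$ lies on $\ell(a,b)$.

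Finally, let $n$ be a unit normal to $\ell(a,b)$ and put $\delta(s):=\langle\gamma_{\CSS}(s)-a,\,n\rangle$, the signed distance from the asymptote. Because $w\perp n$ and $C\in\ell(a,b)$, the expansion above gives $\delta(s)=\langle v_1,n\rangle(s-s_*)+O\big((s-s_*)^2\big)$, so it suffices to show $\langle v_1,n\rangle\neq 0$, equivalently $\det(v_1,w)\neq 0$. A direct computation from the expansion of $\gamma_{\CSS}$ gives, with $\sigma=s-s_*$,
\[
\kappa_{\CSS}\big(\gamma_{\CSS}(s)\big)=\frac{2\det(v_1,w)}{|w|^{3}}\,\sigma^{3}+O(\sigma^{4}),
\]
and by Property (e) of Theorem \ref{ThmGenericCSS} the curvature of the $\CSS$ changes sign across the asymptote, which forces $\det(v_1,w)\neq 0$. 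Hence $\delta$ changes sign at $s_*$: for $s$ slightly above $s_*$ the branch lies strictly on one side of $\ell(a,b)$ and for $s$ slightly below on the other; combined with the preceding paragraph, this is exactly the configuration of Figure \ref{FigureAsymptotes}(i). The one genuinely delicate point is this non-degeneracy $\det(v_1,w)\neq 0$ — that the $\CSS$ is first-order tangent to, but does not osculate, its asymptote. I would deduce it from Property (e) as above; alternatively it follows directly from a Thom-transversality argument of the same kind as those proving Theorem \ref{ThmGenericCSS}.
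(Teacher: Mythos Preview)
Your approach is genuinely different from the paper's. The paper argues by elimination: it lists the four possible asymptotic configurations, uses Property~(e) of Theorem~\ref{ThmGenericCSS} (the curvature of the $\CSS$ changes sign near an asymptote) to discard the two ``same--end'' pictures, and then discards the remaining ``opposite--ends, same--side'' picture by observing that there the orientation of the continuous frame $(\mathbbm{t},\mathbbm{n})$ would have to flip, which can only happen at a singular point. Your argument is instead direct and analytic: you read both conclusions off the Laurent expansion of $\gamma_{\CSS}$ at the simple pole $s_*$. Your route is more explicit and computational; the paper's is shorter but leans on the figure.

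There is, however, a gap in your last step. The implication ``$\kappa_{\CSS}$ changes sign $\Rightarrow\det(v_1,w)\neq 0$'' does not follow as written: if $\det(v_1,w)=0$ the $\sigma^{3}$ term of your curvature expansion vanishes, but the sign change could still come from a higher odd--order term (say $\sigma^{5}$), so Property~(e) alone does not pin down the leading order. The repair is easy and does not require an extra transversality hypothesis. If $m\geqslant 1$ is the smallest index with $\langle v_m,n\rangle\neq 0$ (equivalently $\det(v_m,w)\neq 0$), then carrying your computation one step further gives
\[
\delta(s)\sim\langle v_m,n\rangle\,\sigma^{m},\qquad
\kappa_{\CSS}\big(\gamma_{\CSS}(s)\big)\sim \frac{m(m+1)\det(v_m,w)}{|w|^{3}}\,\sigma^{\,m+2},
\]
so $\kappa_{\CSS}$ changes sign across $s_*$ if and only if $m$ is odd, if and only if $\delta$ does. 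Thus Property~(e) already yields ``opposite sides'' directly, without the intermediate (and unjustified) claim that $\det(v_1,w)\neq 0$.
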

\begin{proof}
\begin{figure}[h]
\centering
\includegraphics[scale=0.44]{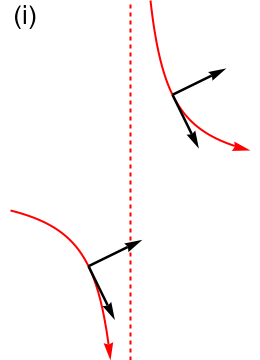}
\includegraphics[scale=0.44]{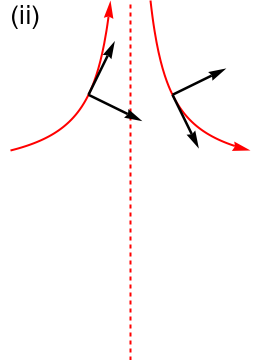}
\includegraphics[scale=0.44]{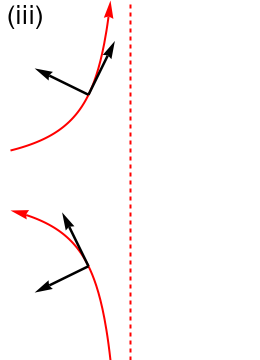}
\includegraphics[scale=0.44]{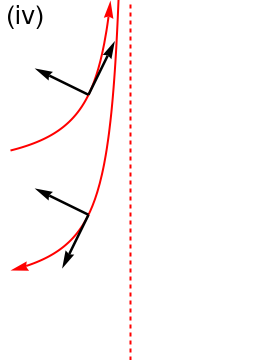}
\caption{An asymptote and branches approaching the asymptote}
\label{FigureAsymptotes}
\end{figure}
Let's analyse the four possibilities of approaching an asymptote by the Centre Symmetry Set in Figure \ref{FigureAsymptotes}. Note that by  genericity (see Theorem \ref{ThmGenericCSS}) there are no double asymptotes. In Figure \ref{FigureAsymptotes} we also illustrate the frames $(\mathbbm{t},\mathbbm{n})$, where $\mathbbm{t}$ is the unit tangent vector field, and $\mathbbm{n}$ is the unit continuous normal vector field to the $\CSS.$

Since the curvature of the $\CSS$ changes sign in a neighbourhood of an asymptote (see Proposition \ref{PropCurvCss}), the possibilities (ii) and (iii) in Figure \ref{FigureAsymptotes} cannot occur. Furthermore, the possibility (iv) is impossible because the orientation of the pair $(\mathbbm{t},\mathbbm{n})$ changes and it can only change in a neighbourhood of a singularity (see Theorem \ref{ThmGenericCSS}). This completes the proof. 
\end{proof}

\begin{definition}\label{RotationNumberCusps}
The \textit{rotation number} of a curve with at most cusp singularities is the rotation number of its continuous unit normal vector field.
\end{definition}

As Figure \ref{PictureNormalVectorToCusp} illustrates, a continuous unit normal vector field is well defined for cusp singularities. \begin{figure}[h]
\centering
\includegraphics[scale=0.35]{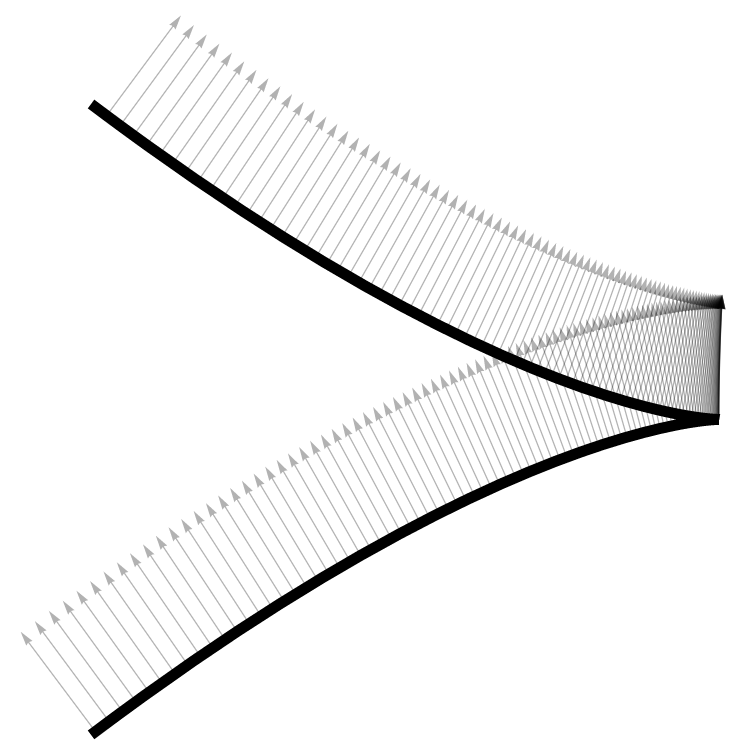}
\caption{A continuous normal vector field at a cusp singularity}
\label{PictureNormalVectorToCusp}
\end{figure}

Definition \ref{RotationNumberCusps} coincides with the classical Definition \ref{RotationNumber} of the rotation number for regular curves.

A number will be called \textit{half-integer} when it is equal to $\frac{s}{2},$ where $s$ is an odd integer.

\begin{lemma}\label{PropCuspsEq}
Let $C$ be a closed smooth curve with at most cusp singularities. If the rotation number of $C$ is an integer, then the number of cusps of $C$ is even and if the rotation number of $C$ is a half-integer, then the number of cusps of $C$ is odd.
\end{lemma}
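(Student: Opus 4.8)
The plan is to track the winding of the continuous unit normal vector field $\mathbbm{n}$ along $C$ and relate each cusp to a "half-turn discrepancy" in that winding. First I would parameterize $C$ on $S^1$ and consider the continuous unit tangent vector $\mathbbm{t}$ at regular points. At a cusp the velocity vector vanishes and the tangent direction \emph{reverses} (the map looks like $t\mapsto(t^2,t^3)$, whose derivative is $(2t,3t^2)$, flipping sign as $t$ passes $0$), whereas the continuous unit normal $\mathbbm{n}$ varies smoothly across the cusp (as Figure \ref{PictureNormalVectorToCusp} illustrates). So the angular function $\theta(s)$ describing the direction of $\mathbbm{n}$ is continuous on all of $S^1$, and the rotation number of $C$ in the sense of Definition \ref{RotationNumberCusps} is $\frac{1}{2\pi}\oint d\theta$, which equals $\frac{1}{2\pi}(\theta(2\pi)-\theta(0))$ — an integer, say $\rho$ (here I am using "rotation number" loosely; what matters is $2\rho\in\mathbb{Z}$, the statement then being $\rho\in\mathbb{Z}$ versus $\rho\in\frac12+\mathbb{Z}$).

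Next I would compare $\theta$ with the angle function of the tangent. Away from cusps, $\mathbbm{t}$ is obtained from $\mathbbm{n}$ by a fixed rotation by $\pi/2$, so the direction angle of $\mathbbm{t}$ is $\theta(s)+\pi/2$ on each regular arc. Crossing a cusp, however, $\mathbbm{t}$ jumps by $\pi$ (direction reversal) while $\theta$ stays continuous. Hence if $C$ has $k$ cusps, then as we traverse $S^1$ once the tangent-direction angle accumulates $\oint d\theta$ from the smooth variation plus $k$ jumps of $\pm\pi$; for the tangent direction to return to itself (mod $2\pi$) after one loop we need $2\pi\rho + \pi\cdot(\text{net count of }\pm\pi\text{ jumps}) \in 2\pi\mathbb{Z}$. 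The cleanest way to make this rigorous is: the \emph{unoriented} tangent line field is continuous along $C$ including at cusps, so it defines a continuous map $S^1\to\mathbb{RP}^1$; lifting to $S^1\to S^1$ (double cover), the tangent \emph{direction} is defined up to the deck transformation, and going once around $C$ it returns up to $(-1)^k$. Therefore $e^{i(2\pi\rho+\pi k)}=1$... — more carefully, I would argue directly that the total turning of $\mathbbm{n}$ is $2\pi\rho$ and the total turning of $\mathbbm{t}$, counting the $k$ cusp-reversals each as a $+\pi$ contribution with a consistent sign convention (the normal does not reverse, so all cusp contributions to $\mathbbm t$ have the same orientation), equals $2\pi\rho + \pi k$; since $\mathbbm{t}$ as a genuine direction must come back to itself after one loop through regular pieces and reversals, $2\pi\rho+\pi k\equiv 0 \pmod{2\pi}$, i.e. $2\rho + k$ is even. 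Thus $k$ is even iff $\rho$ is an integer, and $k$ is odd iff $2\rho$ is an odd integer, i.e. $\rho$ is a half-integer — which is exactly the claim.

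I expect the main obstacle to be making the sign bookkeeping at cusps airtight: one must be sure that \emph{every} cusp contributes a turning of the same sign to $\mathbbm{t}$ relative to $\mathbbm{n}$ (so the $k$ contributions add rather than potentially cancel). This is where the structure of the cusp normal form $t\mapsto(t^2,t^3)$ and the continuity of $\mathbbm{n}$ are essential: I would verify on the normal form that the normal sweeps through a definite small angle across the cusp while the tangent makes a full half-turn "catching up", and that this local picture is orientation-independent, so the count is genuinely additive. A secondary, purely expository point is to note that Definition \ref{RotationNumberCusps} agrees with the oriented turning of $\mathbbm{n}$, so "integer" and "half-integer" in the statement refer unambiguously to $\rho$ and $\rho\in\frac12+\mathbb{Z}$ respectively; once the parity identity $2\rho+k\equiv 0\pmod 2$ is established the two cases follow immediately.
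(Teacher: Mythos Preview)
Your approach is correct and essentially the same as the paper's: both track the frame $(\gamma',\mathbbm{n})$ around the curve and use that the tangent reverses at each cusp while the continuous normal does not. The paper packages this more cleanly by observing that the \emph{orientation} of the ordered pair $(\gamma',\mathbbm{n})$ flips at each cusp; since orientation is a $\mathbb{Z}/2$ invariant, your sign-bookkeeping worry disappears---it does not matter whether a given cusp contributes $+\pi$ or $-\pi$ to the tangent angle, only that the frame orientation flips. Comparing the frame orientation at $s_0$ and $s_0+2\pi$ (same if and only if $k$ is even) with the fact that $\mathbbm{n}(s_0+2\pi)=\pm\mathbbm{n}(s_0)$ (same sign if and only if the rotation number is an integer, since $\gamma'(s_0+2\pi)=\gamma'(s_0)$) gives the result immediately.
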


\begin{proof}
A cusp singularity parameterized by $\beta$ consists of two regular open arcs. The pair $(\beta', \mathbbm{n}_{\beta}),$ where $\mathbbm{n}_{\beta}$ is a unit continuous normal vector field to $\beta$, changes orientation going through the cusp (see Figure  \ref{FigCoorCusp}). 

\begin{figure}[h]
\centering
\includegraphics[scale=0.35]{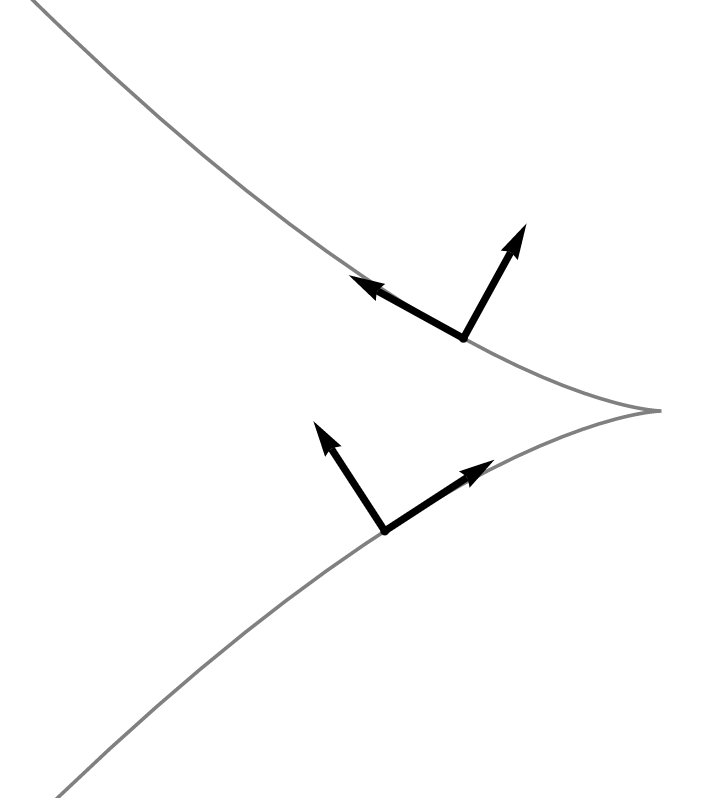}
\caption{Change of the orientation at a cusp singularity}
\label{FigCoorCusp}
\end{figure}

Let $\gamma_C:[0,2\pi]\to\mathbb{R}^2$ be a smooth parameterization of $C,$ where the set $[0,2\pi]$ modulo $2\pi$ is identified with $S^1.$ Let us fix a point $s_0$ such that $\gamma_C(s_0)$ is not a cusp singularity. The orientation of the pair $(\gamma'_C,\mathbbm{n}_{\gamma})$ at $s_0$ and $s_0+2\pi$ can be the same or opposite. If it is the same, then the rotation number of $C$ is an integer and the orientation of $(\gamma'_C,\mathbbm{n}_{\gamma})$ must have changed even number of times. In the other case, the rotation number of $C$ must be a half-integer and the orientation of $(\gamma'_C,\mathbbm{n}_{\gamma})$ must have changed odd number of times. 
\end{proof}

\begin{prop}\label{PropAlgParityOfCuspsInBranch}
Let $M$ be a generic regular closed curve. Let $\mathbbm{n}_{M}$ be a unit continuous normal vector field to $M$. Let $C$ be a smooth branch of $\CSS(M)$ which does not connect inflexion points. Then if the maximal glueing scheme of $C$ is of the form
$\begin{array}{ccccc}
\p_k	&\frown&	 \ldots 	&\frown& \p_l \\ \hline
\p_l 	&\frown&	\ldots 	&\frown& \p_k \\ \hline
\end{array},$ the number of cusps of $C$ is odd. Whereas if the maximal glueing scheme of $C$ is of the form
$\begin{array}{ccccc}
\p_k	&\frown&	 \ldots 	&\frown& \p_k \\ \hline
\p_l 	&\frown&	\ldots 	&\frown& \p_l \\ \hline
\end{array},$ the number of cusps of $C$ is even.
\end{prop}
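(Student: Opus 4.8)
The plan is to reduce Proposition \ref{PropAlgParityOfCuspsInBranch} to Lemma \ref{PropCuspsEq} by computing the rotation number of the smooth branch $C$ of $\CSS(M)$ in terms of the data of its maximal glueing scheme. By Lemma \ref{LemPropMaxGlueSchemes}(ii) a branch not connecting inflexion points closes up, so $C$ is a closed smooth curve with at most cusp singularities (Theorem \ref{ThmGenericCSS}(a)), and Lemma \ref{PropCuspsEq} applies: it suffices to show that the rotation number of $C$ is a half-integer in the first case and an integer in the second. For this I would track the continuous unit normal vector field $\mathbbm{n}_C$ along $C$ using the natural parameterization \eqref{param_l_ostateczna}: the point $\gamma_{\CSS}(s)$ lies on the affine chord through $f(s)$ and $g(t(s))$, and in fact (see Corollary \ref{InflPtCssRemark} and Proposition \ref{PropCurvCss}) the chord direction $g(t(s))-f(s)$ is tangent to $\CSS(M)$ at regular points, so the normal to $C$ is, up to sign, the common normal direction of $M$ at the parallel pair. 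Hence the normal direction of $C$ is controlled by the angle function $\varphi_M$ of Definition \ref{DefAngleFunctin} evaluated along the arcs appearing in the glueing scheme.

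The key computation is the following. As $s$ traverses the glueing scheme $\begin{array}{ccccc}\p_{k}&\frown&\ldots&\frown&\p_{l'}\\ \hline \p_{l}&\frown&\ldots&\frown&\p_{k'}\\ \hline\end{array}$ once, each of the two "rows" of arcs is a concatenation of parallel arcs traced out on $M$, and the total turning of the normal $\mathbbm{n}_C$ equals the net change of $\varphi_M$ along one row — equivalently, half the net change of the tangent direction $\arg f'$ along that row (recall $\varphi_M$ takes values in $[0,\pi]$, i.e. $S^1$ at double speed relative to the actual tangent angle). Now in the first case, $\begin{array}{ccc}\p_k&\frown&\ldots&\frown&\p_l\\ \hline \p_l&\frown&\ldots&\frown&\p_k\\ \hline\end{array}$, the top row runs along $M$ from $\p_k$ to $\p_l$ while the bottom row runs from $\p_l$ to $\p_k$, so together they cover the \emph{entire} curve $M$ once; the tangent angle $\arg f'$ changes by $2\pi\cdot(\text{rotation number of }M)$, hence $\varphi_M$ changes by $\pi\cdot(\text{rot. no. of }M)$, and one must also account for the orientation reversal built into a standard parallel pair (the two rows are "parallel at opposite directions", contributing an extra half-turn). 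A careful bookkeeping of these contributions should yield that the rotation number of $C$ is a half-integer. In the second case, $\begin{array}{ccc}\p_k&\frown&\ldots&\frown&\p_k\\ \hline \p_l&\frown&\ldots&\frown&\p_l\\ \hline\end{array}$, each row is a \emph{closed} sub-loop of $M$ (it begins and ends at the same point $\p_k$, resp. $\p_l$), so $\arg f'$ changes by an integer multiple of $2\pi$ along each row, $\varphi_M$ changes by an integer multiple of $\pi$, and the two rows being simultaneously closed loops forces the contributions to combine into an integer rotation number for $C$.

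The main obstacle I anticipate is making the identification "turning of $\mathbbm{n}_C$ $=$ net change of $\varphi_M$ along a row" fully rigorous at the cusps and at the passages between consecutive parallel arcs (the division points $f(s_k)$, which may be parallel to inflexion points): one has to check that $\mathbbm{n}_C$ extends continuously across each cusp exactly as in Figure \ref{PictureNormalVectorToCusp}, and that at the points where the glueing scheme moves from one pair of parallel arcs to the next the normal of $C$ varies continuously and its direction still matches the common normal of $M$ — this uses genericity (Theorem \ref{ThmGenericCSS}) to exclude the degenerate coincidences, and Remark \ref{RemInflexionInScheme} to ensure no inflexion point of $M$ occurs in the interior of any arc of the scheme. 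Once that correspondence is established, the parity count is immediate from Lemma \ref{PropCuspsEq}, since a half-integer rotation number forces an odd number of cusps and an integer rotation number forces an even number. A cleaner alternative, which I would pursue in parallel, is to avoid computing the rotation number absolutely and instead compare $C$ with $M$ directly: build an explicit homotopy of unit-vector fields from $\mathbbm{n}_C$ to (a reparameterization of) $\mathbbm{n}_M$ restricted to the rows, reducing the parity statement to the known parity of the number of inflexion points / division points of $M$ (Lemma \ref{LemmaAlgEvenNumOfInfl}, Corollary \ref{PropMisEven}) together with the orientation-reversal structure of standard pairs.
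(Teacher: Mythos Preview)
Your overall strategy---reduce to Lemma \ref{PropCuspsEq} by deciding whether the rotation number of $C$ is an integer or a half-integer---is exactly the paper's approach. The genuine gap is in your identification of $\mathbbm{n}_C$. You correctly note that the chord direction $g(t(s))-f(s)$ is tangent to $\CSS(M)$ at regular points, but then conclude that ``the normal to $C$ is, up to sign, the common normal direction of $M$ at the parallel pair.'' That is a non sequitur: the line perpendicular to the chord is \emph{not} the normal line of $M$ at $f(s)$, because the chord is generically not parallel to $T_{f(s)}M$ (equality is precisely the double-tangent situation, cf.\ Proposition \ref{PropCurvCss} and Corollary \ref{InflPtCssRemark}). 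Consequently $\mathbbm{n}_C$ is not governed by $\varphi_M$, and your entire ``key computation'' tracking the angle function of $M$ along the rows of the scheme, as well as the homotopy to $\mathbbm{n}_M$ you sketch at the end, is aimed at the wrong quantity.

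The correct argument is much simpler than what you propose. Since the tangent line to $C$ at the point coming from the ordered pair $(a,b)$ is the chord $\ell(a,b)$, a continuous unit normal to $C$ is a continuous choice of unit vector perpendicular to $b-a$. As the parameter runs through the maximal glueing scheme, the top and bottom points $(a,b)$ vary continuously on $M$ and are always distinct, so $b-a$ is a continuous nowhere-vanishing field; hence $\mathbbm{n}_C$ equals a \emph{fixed} sign times the $90^{\circ}$ rotation of $(b-a)/|b-a|$ throughout (both sides being continuous across cusps, asymptotes, and the division points). In the first case the scheme begins at the column $(\p_k,\p_l)$ and ends at $(\p_l,\p_k)$, so $(b-a)$ at the end is the negative of $(b-a)$ at the start; thus $\mathbbm{n}_C$ returns to its negative and the rotation number of $C$ is a half-integer. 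In the second case both end-columns are $(\p_k,\p_l)$, so $\mathbbm{n}_C$ returns to itself and the rotation number is an integer. Lemma \ref{PropCuspsEq} then yields the stated parity of cusps. This is precisely the assertion in the paper's (very brief) proof, and it requires no reference to $\varphi_M$ or to the rotation number of $M$.
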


\begin{proof}
When the maximal glueing scheme of $C$ is of the form
$\begin{array}{ccccc}
\p_k	&\frown&	 \ldots 	&\frown& \p_l \\ \hline
\p_l 	&\frown&	\ldots 	&\frown& \p_k \\ \hline
\end{array},$ the normal vectors to $M$ at $\p_{k}$ and $\p_{l}$ are opposite and the rotation number of $C$ is a half-integer. Using Lemma \ref{PropCuspsEq} we obtain that the number of cusps in $C$ is odd.

On the other hand, when the maximal glueing scheme of $C$ is in the form \linebreak
$\begin{array}{ccccc}
\p_k	&\frown&	 \ldots 	&\frown& \p_k \\ \hline
\p_l 	&\frown&	\ldots 	&\frown& \p_l \\ \hline
\end{array},$ the normal vectors to $M$ at $\p_{k}$ and $\p_{l}$ are the same and the rotation number of $C$ is an integer. By Lemma \ref{PropCuspsEq} the number of cusps in $C$ is even.
\end{proof}

\begin{thm}\label{ThmEvenNumberOnShell}
Let $M$ be a generic regular closed curve. Let $S^1\ni s\mapsto f(s)\in\mathbb{R}^2$ be a parameterization of $M$ and let $C$ be a smooth branch of the Centre Symmetry Set that connects two inflexion points $f(t_1)$ and $f(t_2)$ of $M.$ Then the number of inflexion points of the arc $f\big((t_1,t_2)\big)$ is even.
\end{thm}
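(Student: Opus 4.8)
The plan is to reduce the statement to a parity fact about the number of inflexion points lying on the arc $f\big((t_1,t_2)\big)$, exploiting the machinery of maximal glueing schemes developed in Section \ref{decomp}. By Proposition \ref{PropAlgPartBetweenIflPt}, the branch $C$ corresponds to a maximal glueing scheme of the form
$\begin{array}{ccccccccc}
\p_{k}&\frown&\p_{k_1}&\frown&\ldots&\frown&\p_{k_n}&\frown&\p_{l}\\ \hline
\p_{k}&\frown&\p_{l_1}&\frown&\ldots&\frown&\p_{l_n}&\frown&\p_{l}\\ \hline
\end{array}$
with $\p_k=f(t_1)$, $\p_l=f(t_2)$ both inflexion points, and all intermediate division points $\p_{k_i}\neq\p_{l_i}$. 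The first row traces one arc of $M$ from $f(t_1)$ to $f(t_2)$ and the second row traces a parallel arc from $f(t_1)$ to $f(t_2)$; together these two arcs decompose $M$ into the two complementary arcs $f\big((t_1,t_2)\big)$ and $f\big((t_2,t_1)\big)$. The key observation is that each step $\begin{array}{ccc}\p_{k_i}&\frown&\p_{k_{i+1}}\\ \hline \p_{l_i}&\frown&\p_{l_{i+1}}\\ \hline\end{array}$ pairs a set of parallel arcs, and by Remark \ref{RemInflexionInScheme} no inflexion point (nor point parallel to one) lies in the interior of any of the constituent arcs $\overarc{\p_{k_i}}{\p_{k_{i+1}}}$ or $\overarc{\p_{l_i}}{\p_{l_{i+1}}}$.

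First I would set up the bookkeeping: the division points $f(s_j)$ on the arc $f\big((t_1,t_2)\big)$ are precisely the $\p_{k_i}$ appearing in the top row of the scheme (together with the endpoints), and each such $\p_{k_i}$ is either an inflexion point of $M$ or a point parallel to one. By Proposition \ref{PropAlgAngleFun}(i)–(iii) and Proposition \ref{PropAlgAngleFun}(iv), as we traverse $f\big((t_1,t_2)\big)$ the angle function $\varphi_M$ moves monotonically on each sub-arc and switches direction exactly at the images of division points that are themselves local extrema of $\varphi_M$, i.e. exactly at the \emph{inflexion points} lying on $f\big((t_1,t_2)\big)$. Since $f(t_1)$ and $f(t_2)$ are both inflexion points, $\varphi_M(t_1)$ and $\varphi_M(t_2)$ are both local extrema of $\varphi_M$. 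The two sub-arcs $f\big((t_1,t_2)\big)$ and $f\big((t_2,t_1)\big)$ being parallel (the two rows of the scheme), the value $\varphi_M(t_1)$ and $\varphi_M(t_2)$ are the \emph{same} pair of extremal values seen from both arcs; hence by Proposition \ref{PropAlgAngleFun}(iv) the parity of the number of local extrema strictly between them along $f\big((t_1,t_2)\big)$ is forced.

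Concretely, I would argue as follows. Suppose $\varphi_M(t_1)$ is a local maximum; then $\varphi_M(t_2)$ is a local minimum or maximum. If one walks along $f\big((t_1,t_2)\big)$, the function $\varphi_M$ starts at an extremum, and between consecutive extrema it alternates max/min (Proposition \ref{PropAlgAngleFun}(iv)); it ends at the extremum $\varphi_M(t_2)$. The number of inflexion points strictly inside the arc equals the number of extrema of $\varphi_M$ strictly between the two endpoint-extrema in this alternating chain. Because the scheme glues $f\big((t_1,t_2)\big)$ to a \emph{parallel} arc with the same endpoint values, and because the global count of extrema of $\varphi_M$ is even (Lemma \ref{LemmaAlgEvenNumOfInfl}) while the endpoints $f(t_1),f(t_2)$ contribute two of them, a parity argument on the alternating max/min pattern shows that the number of extrema strictly between $\varphi_M(t_1)$ and $\varphi_M(t_2)$ along $f\big((t_1,t_2)\big)$ must be even: walking from an extremum to an extremum of the \emph{appropriate} type (dictated by which ``side'' of the sequence of local extrema the parallel arc occupies) forces an even number of intermediate turning points. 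The main obstacle I anticipate is precisely pinning down this last combinatorial step rigorously — one has to verify that the endpoint extrema $\varphi_M(t_1),\varphi_M(t_2)$ are of \emph{opposite} type when viewed along $f\big((t_1,t_2)\big)$ (equivalently, that the alternation of max/min forces an odd number of intervening ``half-turns'', hence an even number of extrema), and to do this one must use the structure of Lemma \ref{LemPropMaxGlueSchemes}(iii) carefully, tracking how the top and bottom rows of the scheme each cross the sequence of local extrema $(\varphi_0,\ldots,\varphi_{2n-1})$. Everything else — the identification of division points with inflexion/parallel points, the monotonicity of $\varphi_M$ between them, and the alternation property — is immediate from the results already established in Section \ref{decomp}.
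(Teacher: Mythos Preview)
Your approach is genuinely different from the paper's and, as written, has a real gap.

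The paper's proof is a one--line reduction: the same maximal glueing scheme that produces the branch $C$ of $\CSS(M)$ under $\pi_{\CSS}$ also produces a smooth branch of the Wigner caustic $\Eq_{0.5}(M)$ under the map $(a,b)\mapsto\tfrac{a+b}{2}$, with the same endpoints $f(t_1),f(t_2)$. The corresponding parity statement for the Wigner caustic is Theorem 4.10 in \cite{ZD_Wigner}, and the result for $\CSS(M)$ follows immediately. No direct combinatorics on $\varphi_M$ is needed here.

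Your direct argument rests on a claim that is not justified and is in general false: that the top row $\p_k,\p_{k_1},\ldots,\p_{k_n},\p_l$ of the maximal glueing scheme traces out one of the two complementary arcs $f\big((t_1,t_2)\big)$, $f\big((t_2,t_1)\big)$ of $M$, and the bottom row the other. Each row is a walk on the cyclic sequence of division points that moves by $\pm 1$ at every step (this is the content of $|k_{i+1}-k_i|\equiv 1\pmod{2m}$ in Definition \ref{DefSetParallArcs}), but nothing prevents this walk from backtracking; the uniqueness in Proposition \ref{PropAlgAlwaysGoFurhter} constrains the \emph{pair} of rows, not either row individually. Consequently the division points appearing in the top row need not coincide with the division points lying on $f\big((t_1,t_2)\big)$, and the identification ``inflexion points on the arc $=$ extrema of $\varphi_M$ encountered along the top row'' breaks down. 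The subsequent parity argument, which you yourself flag as the main obstacle, cannot be salvaged without first controlling the geometry of these walks --- and that is essentially the content of the cited Theorem 4.10 in \cite{ZD_Wigner}. So either you end up reproving that theorem, or you invoke it; the paper does the latter.
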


\begin{proof}
If $M$ is a generic regular closed curve then the Wigner caustic $\Eq_{0.5}(M)$ is a union of smooth parameterized curves (e.g. see Theorem 2.13 in \cite{ZD_Wigner}). Each of these curves we will call a \textit{smooth branch} of the Wigner caustic of $M$.
The $0.5$-\textit{point map} is the map
\begin{align*}
\pi_{0.5}:M \times M\to\mathbb{R}^2, \quad  (a,b)\mapsto\dfrac{a+b}{2}.
\end{align*}
For further details about these terms and Wigner caustic see \cite{ZD_Wigner}.

The glueing scheme presented in Section \ref{decomp} produces not only smooth branches of $\CSS(M)$ (under the $\CSS\text{-}$point map $\pi_{\CSS}$) but also smooth branches of the Wigner caustic (under the $0.5\text{-}$point map $\pi_{0.5}$).

Therefore, smooth branches of $\CSS(M)$ and Wigner caustic must begin and end with the same points. From the construction of glueing scheme it follows that if a smooth branch of $\CSS(M)$ begins with an inflexion point $p,$ then it ends with a different inflexion point $q$ (see Theorem \ref{ThmGlueSchemeIsBranch} along with Lemma \ref{LemPropMaxGlueSchemes}).
This means that the same must be true for the Wigner caustic.

Theorem 4.10 in \cite{ZD_Wigner} shows that for a smooth branch of a Wigner caustic that connects two inflexion points $f(t_1)$ and $f(t_2)$ of $M,$ the number of inflexion points of the arc $f\big((t_1,t_2)\big)$ is even.
Therefore, the same must be true for a smooth branch of the Centre Symmetry Set connecting two inflexion points $f(t_1)$ and $f(t_2)$ of $M.$
\end{proof}

\begin{thm}\label{ThmCSSRosette}
Let $C_{n}$ be a generic $n$-rosette. Then
\begin{enumerate}[(i)]
\item the number of smooth branches of $\CSS(C_n)$ is equal to $n,$
\item exactly one smooth branch of $\CSS(C_n)$ has an odd number of cusps,
\item $n-1$ smooth branches of $\CSS(C_n)$ have a rotation number that is an integer and one branch has a rotation number that is a half-integer,
\item exactly $\displaystyle\left\lfloor\frac{n}{2}\right\rfloor$ smooth branches of $\CSS(C_n)$ have asymptotes,
\item (Proposition 5.13 in \cite{GB}) the number of cusp singularities of a smooth branch of $\CSS(C_n)$ which does not have an asymptote is not smaller than the number of cusp singularities of a smooth branch of the Wigner caustic of $C_n$ having the same glueing scheme as the smooth branch of $\CSS(C_n).$
\end{enumerate}
\end{thm}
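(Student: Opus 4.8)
The plan is to treat the $n$-rosette $C_n$ through the decomposition machinery of Section \ref{decomp}, exploiting the crucial simplification that a rosette has no inflexion points. First I would set up the angle function $\varphi_{C_n}:S^1\to S^1$ from Definition \ref{DefAngleFunctin}; since $\kappa_{C_n}$ never vanishes, $\varphi_{C_n}'=\kappa_{C_n}$ has constant sign, so $\varphi_{C_n}$ is a covering map of degree $\pm n$ with no critical points, hence $\mathcal{M}(\varphi_{C_n})=\emptyset$. By Definition \ref{DefSeqOfParallPts} the sequence of division points consists of a fixed $s_0$ together with all $s$ with $\varphi_{C_n}(s)=\varphi_{C_n}(s_0)$, so $\#\mathcal{S}_{C_n}=n$ in the sense of parallel fibres; more precisely every point of $C_n$ has exactly $n-1$ partners forming a parallel pair with it. For (i), I would argue that the maximal glueing schemes of $C_n$ are in bijection with the $\binom{n}{2}$ pairs of mutually parallel arcs reduced by the cyclic prolongation of Proposition \ref{PropAlgAlwaysGoFurhter} and Lemma \ref{LemPropMaxGlueSchemes}; a careful count — each maximal scheme closes up after visiting a full cycle through the fibre, and the schemes partition the $\binom{n}{2}$ starting arcs into orbits — yields exactly $n$ distinct maximal schemes, hence by Theorem \ref{ThmGlueSchemeIsBranch} exactly $n$ smooth branches of $\CSS(C_n)$.

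For (ii) and (iii), I would classify the maximal glueing schemes of $C_n$ by the form of their endpoints, as in Proposition \ref{PropAlgParityOfCuspsInBranch}. Since $C_n$ has no inflexion points, every maximal scheme is either of the ``return'' type $\begin{array}{ccc}\p_k&\frown&\ldots\frown\p_k\\ \hline \p_l&\frown&\ldots\frown\p_l\\ \hline\end{array}$ (same parallel pair at both ends) or of the ``swap'' type $\begin{array}{ccc}\p_k&\frown&\ldots\frown\p_l\\ \hline \p_l&\frown&\ldots\frown\p_k\\ \hline\end{array}$. By Proposition \ref{PropAlgParityOfCuspsInBranch}, swap-type branches have a half-integer rotation number and an odd number of cusps, while return-type branches have integer rotation number and an even number of cusps. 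I would then show that among the $n$ maximal schemes exactly one is of swap type: tracking the index along the cyclic prolongation, the swap occurs precisely for the ``diameter'' pairing $\p_k,\p_{k+n}$ that connects a point to its antipodal-fibre partner after half a turn; a parity argument on $n$ versus $n-1$ partners singles out exactly one such self-swapping orbit. This gives (ii) directly, and (iii) follows since the remaining $n-1$ branches are return-type with integer rotation number.

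For (iv), I would combine Theorem \ref{ThmAlgEvenNoOfCusp} with (ii)--(iii) and Remark \ref{SCvsCSS}. Each smooth branch $D$ of $\CSS(C_n)$ comes from a maximal glueing scheme which also produces (via $\pi_{\SC}$) a branch $C$ of the secant caustic $\SC(C_n)$; by Theorem \ref{ThmAlgEvenNoOfCusp} the number of asymptotes of $D$ equals the number of cusps of $C$ for a return-type scheme and half that number for a swap-type scheme. Using that $\SC(C_n)$ is itself a union of closed curves with at most cusps whose rotation numbers are controlled by assumption (iii) of Theorem \ref{pqTheorem2}-type reasoning, together with Lemma \ref{PropCuspsEq}, I would count the total number of cusps of the secant caustic branches and distribute them across the $n$ schemes; the arithmetic $\big(n-1\big)$ return branches plus one swap branch, with the swap branch contributing half its secant cusps, collapses to $\lfloor n/2\rfloor$ branches carrying at least one asymptote. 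Finally, (v) is quoted verbatim from Proposition 5.13 in \cite{GB} and requires no argument here beyond noting that branches without asymptotes correspond, under the shared glueing scheme, to branches of the Wigner caustic via $\pi_{0.5}$, exactly as in the proof of Theorem \ref{ThmEvenNumberOnShell}.

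The main obstacle I expect is the bookkeeping in (i) and (iv): one must verify that the cyclic prolongation of glueing schemes for a rosette genuinely partitions the $\binom{n}{2}$ ordered parallel-arc pairs into exactly $n$ maximal orbits, and that the ``swap vs.\ return'' dichotomy is resolved by a single clean index-parity invariant rather than case analysis on $n \bmod 2$. The asymptote count in (iv) is delicate because it mixes the two conversion factors from Theorem \ref{ThmAlgEvenNoOfCusp} (full count vs.\ half count) with a global count of secant-caustic cusps; getting $\lfloor n/2\rfloor$ rather than something off by one will require care with the parity of the rotation numbers and with whether the unique swap branch's secant partner is a single closed curve or a doubled one.
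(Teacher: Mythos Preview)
Your setup contains a factor-of-two error that derails the counting. The angle function $\varphi_{C_n}$ takes values modulo $\pi$, not modulo $2\pi$; since the tangent vector of an $n$-rosette makes $n$ full turns, $\varphi_{C_n}$ is a covering of degree $2n$, not $n$. Hence $\#\mathcal{S}_{C_n}=2n$, the set of parallel arcs is $\Phi_0=\{\overarc{\p_0}{\p_1},\ldots,\overarc{\p_{2n-1}}{\p_0}\}$, and every point of $C_n$ has $2n-1$ parallel partners, not $n-1$. The paper simply writes the $n$ maximal glueing schemes down explicitly: for $k=1,\ldots,n-1$ the ``shift by $k$'' scheme $\begin{smallmatrix}\p_0&\frown&\cdots&\frown&\p_0\\\p_k&\frown&\cdots&\frown&\p_k\end{smallmatrix}$ of length $2n$, and for $k=n$ the swap-type scheme $\begin{smallmatrix}\p_0&\frown&\cdots&\frown&\p_n\\\p_n&\frown&\cdots&\frown&\p_0\end{smallmatrix}$ of length $n$. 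These use $(n-1)\cdot 2n + n = n(2n-1)=\binom{2n}{2}$ arcs, so by Proposition \ref{PropNumDiffArcs} there are no others. With the correct indexing your argument for (ii)--(iii) via Proposition \ref{PropAlgParityOfCuspsInBranch} is then exactly the paper's.

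Your strategy for (iv) has a genuine gap. Routing the count through Theorem \ref{ThmAlgEvenNoOfCusp} and a global tally of secant-caustic cusps cannot decide \emph{which} of the $n$ branches carry an asymptote; a total cusp count does not distribute itself. The paper's argument is instead per-branch and geometric: labelling the $2n$ parallel partners $a_0,\ldots,a_{2n-1}$ in order, one checks that $C_n$ is curved in the same side at $a_i,a_j$ if and only if $i-j$ is even. Thus the branch $\CSS_k(C_n)$ (shift by $k$) is built entirely from same-side pairs when $k$ is even and entirely from different-side pairs when $k$ is odd. Lemma \ref{CurvedVsAsymptotes} kills asymptotes on the odd-$k$ branches; for the even-$k$ branches the paper cites Lemma 4.2(iv) of \cite{Romero} (the corresponding secant-caustic branch is singular) together with Remark \ref{SCvsCSS} to guarantee at least one asymptote. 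The even values of $k$ in $\{1,\ldots,n\}$ number exactly $\lfloor n/2\rfloor$. You are missing this same-side/different-side parity criterion, which is the real content of (iv).

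Finally, (v) is not merely quoted: the paper supplies the one-line Rolle argument. Cusps of the Wigner caustic occur where $\kappa_M(a)/\kappa_M(b)=1$ (for a standard pair), while cusps of $\CSS$ occur where $(\kappa_M(a)/\kappa_M(b))'=0$; on a branch without asymptotes the ratio is smooth, so between consecutive Wigner cusps Rolle's theorem forces a $\CSS$ cusp.
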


\begin{proof}
Since $C_n$ is an $n$-rosette, for any point $a$ in $C_n$ there are exactly $2n-1$ points $b\neq a$ such that $a,$ $b$ is a parallel pair of $C_n.$ Therefore the set of parallel arcs is of the following form
$$\Phi_0=\left\{\overarc{\p_0}{\p_1}, \overarc{\p_1}{\p_2}, \ldots, \overarc{\p_{2n-2}}{\p_{2n-1}}, \overarc{\p_{2n-1}}{\p_0}\right\}.$$
Let $\CSS_{k}(C_n)$ be a smooth branch of $\CSS(C_n)$ and let $\Eq_{0.5,k}(C_n)$ be a smooth branch of $\Eq_{0.5}(C_n).$ We can create the following maximal glueing schemes.
\begin{itemize}
\item A maximal glueing scheme of $\CSS_{k}(C_n)$ and of $\Eq_{0.5,k}(C_n)$ for \linebreak $k\in\{1,2,\ldots,n-1\}:$
$$\begin{array}{ccccccccccccc}
\p_0	&\frown&	\p_1	&\frown&	\p_2	&\frown&	\ldots 	&\frown&	\p_{2n-2}	&\frown&	\p_{2n-1} 	&\frown&	\p_0 	\\ \hline 
\p_k	&\frown&	\p_{k+1}	&\frown&	\p_{k+2}	&\frown&	\ldots 	&\frown&	\p_{k-2}	&\frown&	\p_{k-1}	&\frown&	\p_k	\\ \hline
\end{array}.$$
\item A maximal glueing scheme of $\CSS_{n}(C_n)$ and of $\Eq_{0.5,n}(C_n):$
$$\begin{array}{ccccccccccc} 
\p_0	&\frown&	\p_1	&\frown&	\p_2	&\frown&	\ldots 	&\frown&	  \p_{n-1} 	&\frown& \p_n \\ \hline
\p_n	&\frown&	\p_{n+1} &\frown&	\p_{n+2} &\frown&	\ldots 	&\frown&	\p_{2n-1} 	&\frown& \p_0 \\ \hline
\end{array}.$$
\end{itemize}
The number of arcs of the glueing schemes presented above is $n(2n-1).$ There are no more maximal glueing schemes for the $\CSS(C_n),$ because Proposition \ref{PropNumDiffArcs} states that this is the total number of different arcs of the Centre Symmetry Set of $C_n$. Therefore the number of smooth branches of $\CSS(C_n)$ is equal to $n,$ which proves (i).

After applying Proposition \ref{PropAlgParityOfCuspsInBranch} to these maximal glueing schemes of the Centre Symmetry Set we obtain that there are $n-1$ smooth branches of $\CSS(C_n)$ with even number of cusps and one smooth branch of $\CSS(C_n)$ with odd number of cusps. This observation proves (ii).

Statement (ii) along with Lemma \ref{PropCuspsEq} imply (iii).

Let $(a_0, a_1, \ldots, a_{2n-1})$ be a sequence of points in $C_n$ with the order compatible with the orientation of $C_n$ such that $a_i,$ $a_j$ is a parallel pair. Then $C_n$ is curved in the same side at $a_i$ and $a_j$ if and only if $i-j$ is even.
Hence, smooth branches $\CSS_{2}(C_n), \CSS_{4}(C_n), \ldots, \CSS_{2\cdot\left\lfloor\frac{n}{2}\right\rfloor}(C_n)$ are created from parallel pairs $a,$ $b$ in $C_n$ such that $C_n$ is curved in the same side at $a$ and $b.$ All the other smooth branches of the Centre Symmetry Set of $C_n$ are created from parallel pairs $a,$ $b$ in $C_n$ such that $C_n$ is curved in different sides at $a$ and $b.$

By Lemma \ref{CurvedVsAsymptotes}, this means that for odd values of $k$ smooth branches $\CSS_{k}(C_n)$ cannot have any asymptotes.
Lemma 4.2 (iv) in \cite{Romero} states that the branches of the secant caustic which are created from the glueing schemes for the smooth branches of $\CSS_k(C_n)$ for even values of $k$ are singular. By Remark \ref{SCvsCSS} the smooth branch $\CSS_{k}(C_n)$ has at least one asymptote whenever $k$ is even. This proves (iv).

Let $a,$ $b$ be a parallel pair of $C_n$ and let $k$ be even. Let us consider branches $\CSS_k(C_n)$ and $\Eq_{0.5,k}(C_n)$. Note that $\CSS_k(C_n)$ does not have asymptotes. By Lemma \ref{CuspsofCSS} we know that $\CSS(M)$ has a singular point if and only if $\left( \frac{\kappa_M(a)}{\kappa_M(b)} \right)' = 0,$ where $'$ signifies the derivative with respect to the arc parameter of the curve.
Note that the Wigner caustic has a singular point if and only if $\frac{\kappa_M(a)}{\kappa_M(b)}  = 1$ (Proposition 2.8 in \cite{ZD_Wigner}) if $a,b$ is a standard pair of points.
Therefore, by Rolle's theorem, between each two cusps of the Wigner caustic we have at least one cusp singularity of $\CSS_k(C_n).$
As a result, we get that the number of cusp singularities of $\CSS_k(C_n)$ is not smaller than the number of cusp singularities of $\Eq_{0.5,k}(C_n).$
\end{proof}

\begin{remark}
Let $C_n$ be a generic $n$-rosette. By Theorem 5.3 in \cite{ZD_Wigner} we know that for even values of $n,$ every smooth branch of $\Eq_{0.5}(C_n)$ has an even number of cusps -- otherwise exactly one branch of $\Eq_{0.5}(C_n)$ has an odd number of cusps. By this observation and by Theorem \ref{ThmCSSRosette} we are able to find an example of a curve that has the number of cusps of the Centre Symmetry Set and the number of cusps of the Wigner caustic with different parities. Note that the parities of the numbers of these cusps in all the examples in other works have been the same (see \cite{DomitrzRios, GiblinHoltom}). In Figure \ref{FigCssWc} we present a $2$-rosette (the same as in Figure \ref{FigCssSc}) for which the number of cusps of the Wigner caustic is even, and the number of cusps of the Centre Symmetry Set is odd.
\end{remark}

\newpage

\begin{figure}[h]
\centering
\includegraphics[scale=0.30]{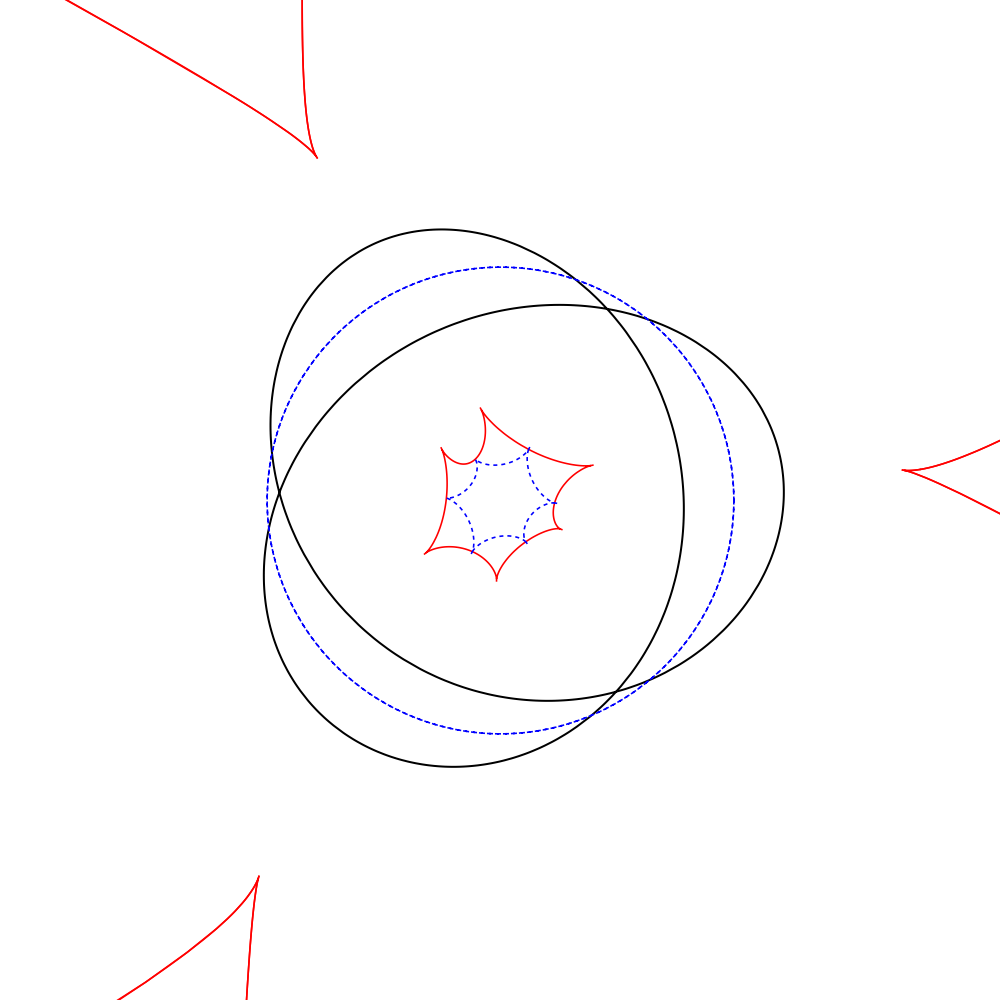}
\caption{A $2$-rosette $M$, $\CSS(M),$ and $\Eq_{0.5}(M)$ (the dashed line)}
\label{FigCssWc}
\end{figure}

\section*{Declarations}

\textbf{Funding and Competing interests}. The authors did not receive support from any organization for the submitted work. The authors have no competing interests to declare that are relevant to the content of this article.

\bibliographystyle{amsalpha}

\end{document}